\documentclass[12pt, twoside, leqno]{article}

\usepackage{amsmath,amsthm}
\usepackage{amssymb}

\usepackage{enumerate}

\usepackage{color}


\pagestyle{myheadings}
\markboth{ L. Bialas-Ciez, M. Kosek and M. Stawiska}{ Lagrange polynomials in approximation by Julia sets}



\newtheorem{thm}{Theorem}[section]
\newtheorem{cor}[thm]{Corollary}
\newtheorem{lem}[thm]{Lemma}
\newtheorem{prop}[thm]{Proposition}

\newtheorem{con}[thm]{Conjecture}


\newtheorem{mainthm}[thm]{Main Theorem}


\theoremstyle{definition}
\newtheorem{defin}[thm]{Definition}
\newtheorem{rem}[thm]{Remark}
\newtheorem{exa}[thm]{Example}
\newtheorem{nota}[thm]{Notation}



\numberwithin{equation}{section}


\frenchspacing

\textwidth=13.5cm
\textheight=23cm
\parindent=16pt
\oddsidemargin=-0.5cm
\evensidemargin=-0.5cm
\topmargin=-0.5cm
\parskip 2pt


\newcommand{\cn}{\mathbb{C}^n}
\newcommand{\cj}{\mathbb{C}}
\newcommand{\rj}{\mathbb{R}}
\newcommand{\nj}{\mathbb{N}}

\newcommand{\dist}{\mbox{\rm dist}}

\newcommand{\D}{{\mathbb{D}}}

\newcommand{\K}{{\cal K}}

\parskip 3pt

\begin{document}


\baselineskip=17pt


\title{  On Lagrange polynomials\\ and the rate of  approximation\\ of planar sets by polynomial Julia sets}

\author{ Leokadia Bialas-Ciez and Marta Kosek\\ Institute of Mathematics\\ Faculty of Mathematics and Computer Science\\ Jagiellonian University\\ \L ojasiewicza 6, 30-348 Krak\'ow, POLAND\\
E-mails:  Leokadia.Bialas-Ciez@uj.edu.pl\\ and
 Marta.Kosek@im.uj.edu.pl
\and  and\\
Ma\l gorzata Stawiska\\
Mathematical Reviews\\416 Fourth St., Ann Arbor, Michigan, USA\\
E-mail: stawiska@umich.edu
\\\\
 \textit{To the memory of Professor J\'ozef Siciak}}


\maketitle


\renewcommand{\thefootnote}{}

\footnote{2010 \emph{Mathematics Subject Classification}:  Primary 30E10; Secondary 30C10, 30C85, 31A15, 37F10.}

\footnote{\emph{Key words and phrases}:  Lagrange polynomials, Lebesgue constants,  Green function, Julia sets.}

\renewcommand{\thefootnote}{\arabic{footnote}}
\setcounter{footnote}{0}


 \begin{abstract}

  We revisit the  approximation of  nonempty compact planar sets  by filled-in Julia sets of polynomials developed in \cite{arxiv} and  analyze the rate of approximation.  We use  slightly modified  fundamental Lagrange interpolation polynomials and show that taking   certain classes of nodes with subexponential growth of Lebesgue constants improves the approximation rate. To this end we investigate  properties of some arrays of points in $\mathbb{C}$. In particular we prove subexponential growth of Lebesgue constants for   pseudo Leja sequences with bounded Edrei growth on finite unions of quasiconformal arcs. Finally, for some classes of sets we estimate more precisely the rate of approximation by filled-in Julia sets in Hausdorff and Klimek metrics.

\end{abstract}

\section{Introduction}

Julia sets of complex polynomials have been studied in many aspects (for an introduction to the topic, see e.g. \cite{CG}). Recently an interest arose in  approximation of planar sets by polynomial  Julia sets in the Hausdorff metric.  One possible approach to this problem can be found in \cite[Theorem 3]{nagoya}, where
 the approximating sets are   composite Julia sets, defined by means of families of quadratic polynomials.  Another line of research    was  initiated by K. Lindsey in \cite{L15} and further advances regarding possibilities of such approximation were made  e.g. in  \cite{ivrii1},  \cite{ivrii2}, \cite{pixie} and \cite{bishop}.  The article \cite{arxiv} gave a characterization of the sets which can be approximated arbitrarily well by filled-in Julia sets of polynomials  in such a way that their boundaries are also approximated by Julia sets of these polynomials with the same accuracy.  A part of this characterization was provided by the  following theorem:

\begin{thm}[\mbox{\cite[Theorem 1.2]{arxiv}}]\label{thm:LYtheorem} Let $E \subset \mathbb{C}$ be any nonempty compact set with connected complement. Then for any $\varepsilon > 0$  there exists a polynomial $P$ such that
\[
\chi (E,\mathcal{K}(P)) < \varepsilon, \quad \chi (\partial E,\mathcal{J}(P)) < \varepsilon,
\]
where $\mathcal{K}(P),\ \mathcal{J}(P)$ are respectively the filled-in Julia set and the Julia set of $P$ and $\chi$ denotes the Hausdorff metric.

\end{thm}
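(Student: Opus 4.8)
The plan is to realise $P$ as a Lagrange interpolation polynomial, with nodes clustered on $\partial E$, of a B\"ottcher-type ``external map'' of $E$, and then to read off $\mathcal{K}(P)$ and $\mathcal{J}(P)$ from the interpolation error. First I would dispose of the degenerate case: if $E$ is polar it has empty interior, and it is approximated by an arbitrarily small closed disc --- the filled-in Julia set of $z\mapsto a(z-z_0)^2+z_0$ for $|a|$ large --- so from now on assume $\operatorname{cap}(E)>0$, and write $g:=g_E$ for the Green function of $\mathbb{C}\setminus E$ with pole at infinity, so $E=\{g=0\}$. The core aim is to build, for each large $n$, a polynomial $P_n$ of degree $n$ with $g\circ P_n$ uniformly close to $n\,g$ on a neighbourhood of $E$; such a $P_n$ has $g_{\mathcal{K}(P_n)}$ uniformly close to $g$ --- compare the nested pre-images $P_n^{-m}(\{g\le\delta\})$ with $\{g\le\delta/n^m\}\downarrow E$ and use $\mathcal{K}(P_n)=\bigcap_m P_n^{-m}(\overline{\mathbb{D}(0,R_n)})$ for an escape radius $R_n$ --- hence $\mathcal{K}(P_n)=\{g_{\mathcal{K}(P_n)}=0\}$ is Hausdorff-close to $E=\{g=0\}$, and since $\mathcal{J}(P_n)=\partial\mathcal{K}(P_n)$ one then also gets $\mathcal{J}(P_n)$ close to $\partial E$.

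The construction of $P_n$ follows the idea of K. Lindsey \cite{L15}, realised through interpolation. When $\mathbb{C}\setminus E$ is simply connected, let $\psi$ be the conformal map of $\{|w|>1\}$ onto $\mathbb{C}\setminus E$ with $\psi(\infty)=\infty$; the external map $\Psi_n:=\psi\circ(w\mapsto w^n)\circ\psi^{-1}$ is holomorphic on $\mathbb{C}\setminus E$ with a pole of order $n$ at $\infty$ and satisfies $g\circ\Psi_n=n\,g$ exactly. Let $P_n$ be the fundamental Lagrange interpolation polynomial of $\Psi_n$ at $n+1$ nodes taken on a fixed outer level curve $\{g=\delta_0\}$, chosen to be Fekete or, better, Leja / pseudo-Leja points. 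By the two-constants theorem the interpolation error on $\{g\le\delta_0\}$ is bounded by the Lebesgue constant of the node array times the best uniform degree-$n$ polynomial approximation of $\Psi_n$ on a slightly larger level curve across which $\Psi_n$ still extends holomorphically, and the latter decays geometrically in $n$; so with subexponentially growing Lebesgue constants the product still tends to $0$ --- which is precisely the gain this paper isolates and pushes further. Feeding $\|\Psi_n-P_n\|\to 0$ together with $g\circ\Psi_n=n\,g$ into the dynamical comparison above gives $g_{\mathcal{K}(P_n)}\to g$, hence $\chi(\mathcal{K}(P_n),E)\to 0$, and since the nodes are $o(1)$-dense in $\partial E$ while $\mathcal{J}(P_n)$ stays in an $o(1)$-neighbourhood of $\partial E$, also $\chi(\mathcal{J}(P_n),\partial E)\to 0$. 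When $\mathbb{C}\setminus E$ is merely connected (so $E$ may be disconnected) there is no global $\psi$, and one has to assemble $\Psi_n$ out of finitely many local exterior uniformisations --- one per component of a lemniscate approximation of $E$ --- arranged into a consistent degree-$n$ branched-covering model over the sphere that permutes the components cyclically, so that $\mathcal{K}(P_n)$ acquires the correct number of bounded (super-attracting) Fatou components.

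The step I expect to be the real obstacle is precisely turning ``$g_{\mathcal{K}(P_n)}$ close to $g$'' into ``$\mathcal{J}(P_n)$ Hausdorff-close to $\partial E$'', together with building the model $\Psi_n$ when $\partial E$ is wild. Closeness of the Green functions gives the inclusion $\mathcal{K}(P_n)\subseteq\{g\le\eta_n\}$ for free, but the reverse direction, and above all the control of the boundaries along narrow channels (``fjords'') of $\mathbb{C}\setminus E$, force one to distribute the interpolation nodes finely over $\partial E$ so as to anchor $\mathcal{J}(P_n)$ there, and to estimate, uniformly in $n$, how the finitely many escaping critical orbits of $P_n$ displace $\mathcal{K}(P_n)$. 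One cannot sidestep this by reducing, via Hilbert's lemniscate theorem, to a lemniscate domain, because passing to a lemniscate region squeezed around $E$ need not keep the \emph{boundaries} Hausdorff-close when $E$ has thin fjords or empty interior; so the bookkeeping must be carried out for arbitrary compact $E$ with connected complement. By contrast, the degenerate cases and the interpolation-error estimate are comparatively routine. (A more dynamical route would replace the truncation of $\Psi_n$ by quasiconformal surgery --- glue the $\Psi_n$-model near $E$ to $z\mapsto az^n$ near $\infty$ through a thin quasiconformal annulus and straighten by the measurable Riemann mapping theorem, with the straightening map tending to the identity as its dilatation tends to $0$ --- but the structural difficulties are the same.)
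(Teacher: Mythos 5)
This theorem is not proved in the present paper: it is quoted from \cite[Theorem 1.2]{arxiv}, and the nearest in-house argument is Theorem \ref{thm: filled}, whose mechanism is much more elementary than the one you propose. There, after arranging $0\in{\rm int}\,E$ (by passing to a sublevel set $E_{s/2}$ and translating), one sets $P_n(z)=z\exp(-ns/3)Q_n(z)$ for a normalized fundamental Lagrange polynomial $Q_n$ with nodes in $E$ (Fekete or pseudo Leja), and checks two pointwise inequalities: $|P_n|$ is so small on $E$ that $P_n(E)\subset\mathbb{D}(0,r(E))\subset E$, giving $E\subset\mathcal{K}(P_n)$, while every $\zeta$ with $g_E(\zeta)>s$ satisfies $|P_n(\zeta)|>\frac{R(E)+1}{r(E)}|\zeta|$ and lands again in $\Omega_s$, giving $\mathcal{K}(P_n)\subset E_s\subset(\widehat{E})^\varepsilon$. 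No exterior uniformization and no limit of Green functions is needed; the only analytic input is that $\log|Q_n|^{1/n}$ approximates $g_E$ from below off $E$. Your route through the external map $\Psi_n=\psi\circ(w\mapsto w^n)\circ\psi^{-1}$ and the comparison $g\circ P_n\approx ng$ is genuinely different, and it runs into several concrete obstructions.

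First, the degenerate case is wrong as stated: a polar compact set need not have small diameter (a countable compact set, or a capacity-zero Cantor set, can be spread over a large region), so it is not Hausdorff-approximated by one small disk; the usual fix is to replace $E$ by $\widehat{E^{\delta}}$, but then $\partial E$ and the new boundary need not be Hausdorff-close when $E$ has thin fjords. Second, when $E$ is disconnected the exterior Riemann map $\psi$ does not exist ($\widehat{\mathbb{C}}\setminus E$ is not simply connected), and your ``branched-covering model permuting the components'' is a program, not a construction; the Fekete/Lagrange approach exists precisely to avoid this, since $|w_n|^{1/n}\to{\rm cap}\,E\cdot e^{g_E}$ supplies the needed growth with no uniformization. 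Third, and most seriously, uniform closeness of $g_{\mathcal{K}(P_n)}$ to $g_E$ is closeness in the Klimek metric $\Gamma$, and the paper states explicitly that $\Gamma$ is not equivalent to $\chi$; this is the entire reason Section \ref{s:rates} introduces the \L ojasiewicz--Siciak condition. From $\|g_{\mathcal{K}(P_n)}-g_E\|_{\mathbb{C}}$ small you get $\mathcal{K}(P_n)\subset E_{\eta_n}$, but neither $E\subset\mathcal{K}(P_n)$ nor even $E\subset(\mathcal{K}(P_n))^{\varepsilon}$ follows; the actual proofs obtain $E\subset\mathcal{K}(P_n)$ directly from $P_n(E)\subset E$. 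Finally, you yourself flag that the second half of the statement, $\chi(\partial E,\mathcal{J}(P))<\varepsilon$, is ``the real obstacle'' and leave it unproved; that boundary control is exactly the hard content of \cite[Theorem 1.2]{arxiv} (Theorem \ref{thm: filled} here deliberately does not claim it). So the proposal is an interesting alternative outline, but as it stands it does not prove the theorem.
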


 The construction of  filled-in Julia sets such that  Theorem  \ref{thm:LYtheorem} holds was carried out in \cite[Theorem 3.2]{arxiv} and  involved the use of  polynomials appearing in  approximation of the Green function $g_E$ of $E$ in   $\mathbb{C} \setminus E$.  Recall that many  sequences of polynomials can be taken into consideration for the purpose of approximating the Green function associated with  a compact set $E$ of logarithmic capacity ${\rm cap} E >0$ with connected complement.  A theorem due  to L. Kalm\'ar and (independently) J. L. Walsh  (see \cite[Theorem 7.4]{Walsh}  or \cite[Theorem 1.4]{BBCL}) says that, for a sequence $(w_n)$ of polynomials with zeros in $E$ such that ${\rm deg} w_n=n$ for $n\in\{1,2,...\}$, one has   $|w_n(z)|^{1/n} \longrightarrow {\rm cap} E \cdot \exp(g_E(z))$ uniformly on compact
subsets of $\mathbb{C}\setminus E$ if and only if $\|w_n\|_E^{1/n} \longrightarrow {\rm cap} E$ as $n \to \infty$.
Conditions implying these  statements are listed e.g. in   \cite[Theorem 1.5.I]{BBCL}.   A quantitative version of  Theorem \ref{thm:LYtheorem} (\cite[Theorem 4.12]{arxiv}) was also proved.  In this article we will  examine the rate of approximation by Julia sets and provide  examples of  sequences of  polynomials  that guarantee  a better rate of approximation than the one in \cite{arxiv} (see Section \ref{s:filled} below).\\

We will devote a lot of attention to
pseudo Leja points (see Definition \ref{def:psleja} below).  Such points were introduced and studied in \cite{BCC}.  Their significant advantage lies in the fact that they are relatively easy to compute even though they are a generalization of classical Leja points.  When $n$ is large, it is almost impossible to find numerically the Fekete or Leja $n$-tuples (see e.g. \cite{BSV} for discussion of computational complexity). And without  numerical tools these tuples are available only for a few classical examples. In contrast, it is quite easy to find the $n$-th point of a pseudo Leja sequence. It suffices to cover the outer boundary of  an infinite compact set $E$ by pairwise externally tangent disks with a fixed radius (depending on $n$) and pick one point in each of those disks. One of these points has to satisfy the condition from the definition of the $n$-th pseudo Leja point (see Section \ref{s:lagrange}  for more details).

In Section \ref{s:lagrange} we establish a few new results for arrays of distinct points of compact sets, in particular  subexponential growth of Lebesgue constants for certain pseudo Leja sequences. The known
open problem on the growth of Lebesgue constants for general Leja points is related to our research, hence we discuss this question  too, in  quite substantial detail. Our main result gives a quite general equivalence concerning the growth of Lebesgue constants $\Lambda_n(E,\zeta^{(n)})$  (see Definition \ref{def:Lebesgue}):

\begin{mainthm}\label{mthm:Lebesgue}
Let $E$ be a polynomially convex, regular, compact set in $\cj$ and for each $n$ let $\zeta^{(n)}=\{\zeta_0^{(n)},...,\zeta_n^{(n)}\}$  be a $(n+1)$-tuple of distinct points  in $E$. Then
the following conditions are equivalent:
\begin{eqnarray*}\bullet & \lim\limits_{n\rightarrow\infty} \Lambda_n(E,\zeta^{(n)})^{1/n} = 1,\\
\bullet & \lim\limits_{n\rightarrow\infty} \min\limits_{k\in\{0,...,n\}} \left|\prod_{j\in\{0,...,n\}\setminus\{k\}}\left(\zeta_j^{(n)}-\zeta_k^{(n)}\right)\right|^{1/n} = {\rm cap}\,E.\end{eqnarray*}
\end{mainthm}

  While investigating pseudo Leja sequences we also observed two nice separation properties for them, namely Proposition \ref{prop:sep} and Theorem \ref{thm:deltan}, which  can be  useful in further research in  interpolation theory.

All terminology and auxiliary results will be introduced  in subsequent sections. Here we would like to recall the following notion:
\begin{defin} Let $E$ be a  compact subset of $\cj$. The {\it polynomially convex hull} of $E$ is
\[
\widehat{E}=\left\{z \in \mathbb{C}:\; |p(z)| \leq \|p\|_E:=\max_{w \in E}|p(w)| \mbox{ for every polynomial } p\right\}.
\]
The set $E$ is called {\it polynomially convex} if $E =\widehat{E}$.
\end{defin}
It is known (e.g. by Runge's theorem) that in $\mathbb{C}$ the class of polynomially convex sets is the same as the class of compact sets with connected complements.   This class is important  in  interpolation theory.  For the purpose of approximating planar sets by Julia sets it is worth recalling  that filled-in Julia sets are polynomially convex.

 Thanks to Main Theorem \ref{mthm:Lebesgue} we are able to prove Theorem \ref{thm:bounds}, which is a generalization of \cite[Theorem 1]{Siciak}.  Together with Lemma \ref{lem:normL_n} it   establishes the convergence  $$\frac1n\log|L_n| \longrightarrow g_E, \qquad n\rightarrow \infty,$$ where $L_n$ are appropriately chosen fundamental Lagrange interpolation polynomials $L^{(j_n)}(\cdot,\zeta^{(n)})$ (see Notations \ref{nota:lagrange} below),  and gives an estimation of its rate. When the Green function $g_E$ is H\"older continuous, Theorem \ref{thm:bounds}
offers a rate of approximation of $g_E$  which  in certain cases is better than the general one given in \cite{Pritsker11}, namely $O\left(\frac{\log n}n\right)$ rather than $O\left(\frac{\log n}{\sqrt{n}}\right)$. This allows us to improve the rate of approximation of $E$ by filled-in Julia sets. For some special classes of sets we get more precise  estimates of the rate of approximation in Section \ref{s:rates}, namely

\begin{prop} [cf. Corollary \ref{wn:Loja}]
Let $E$ be a compact set
satisfying  the \L ojasiewicz-Siciak condition. Assume also that the Green function $g_E$ is H\"older continuous with exponent $\alpha\in (0,1]$. Then there exist positive constants $C, \kappa$, depending only on the set $E$ and, for each $n\in\nj$, large enough,  a polynomial $P_n$ of degree $n+1$ such that
  \begin{itemize}
    \item $\chi(E, {\cal K}(P_n))\leq C \left(n^{-1}\log(n+1)\right)^\kappa, \quad {\it if} \quad \alpha\in [1/2, 1]$;
    \item $\chi(E, {\cal K}(P_n))\leq C n^{-2\alpha\kappa}, \quad {\it if} \quad \alpha\in (0,1/2)$.
  \end{itemize}
\end{prop}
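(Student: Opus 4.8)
\emph{Sketch of the argument.}
The proof combines three ingredients: the quantitative approximation of the Green function $g_E$ by logarithms of the (modified) fundamental Lagrange polynomials $L_n$ furnished by Theorem \ref{thm:bounds} and Lemma \ref{lem:normL_n}; the construction of approximating filled-in Julia sets from \cite[Theorem 3.2]{arxiv}, in the quantitative form of \cite[Theorem 4.12]{arxiv}; and the \L ojasiewicz-Siciak inequality, which turns a sup-norm bound on $g_E$ near $E$ into a bound on the Euclidean distance to $E$.

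First I would pick, for each $n$, an admissible node set $\zeta^{(n)}=\{\zeta_0^{(n)},\dots,\zeta_n^{(n)}\}\subset E$ realizing the equivalent conditions of Main Theorem \ref{mthm:Lebesgue} --- for instance the Fekete $(n+1)$-tuples of $E$, whose Lebesgue constants satisfy $\Lambda_n(E,\zeta^{(n)})\le n+1$, or the pseudo Leja sequences with bounded Edrei growth studied earlier, which are far easier to compute. For such nodes Lemma \ref{lem:normL_n} controls $\|L_n\|_E$, and since $g_E$ is H\"older continuous of exponent $\alpha$, Theorem \ref{thm:bounds} supplies a compact neighbourhood $\overline\Omega$ of $E$ and a sequence $\eta_n\downarrow 0$ with
\[
\Bigl\|\tfrac1n\log|L_n|-g_E\Bigr\|_{\overline\Omega}\le\eta_n,\qquad
\eta_n\le\begin{cases}C_1\,n^{-1}\log(n+1),&\alpha\in[1/2,1],\\[2pt]C_1\,n^{-2\alpha},&\alpha\in(0,1/2),\end{cases}
\]
the constant $C_1$ and the neighbourhood $\overline\Omega$ depending only on $E$ (the case split at $\alpha=1/2$ is exactly the one delivered by Theorem \ref{thm:bounds}).

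Next I would feed the polynomials $L_n$ into the construction of \cite[Theorem 4.12]{arxiv}. After a fixed affine normalization of $E$ and an appropriate rescaling, this yields polynomials $P_n$ of degree $n+1$ whose filled-in Julia set is squeezed between $E$ (up to a perturbation of size $O(\eta_n)$, absorbed below) and a thin sublevel set of $g_E$:
\[
E\subset {\cal K}(P_n)\subset\{z\in\cj:\ g_E(z)\le c\,\eta_n\},
\]
with $c$ depending only on $E$; consequently $\chi\bigl(E,{\cal K}(P_n)\bigr)$ is bounded by the Hausdorff distance between $E=\{g_E=0\}$ and the $c\eta_n$-sublevel set of $g_E$. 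Finally, the \L ojasiewicz-Siciak condition provides constants $B>0$ and $\mu\ge1$ with $g_E(z)\ge B\,(\dist(z,E))^\mu$ for $z$ in a neighbourhood of $E$; hence $g_E(z)\le c\eta_n$ forces $\dist(z,E)\le (c\eta_n/B)^{1/\mu}$, and therefore $\chi(E,{\cal K}(P_n))\le (c\eta_n/B)^{1/\mu}$ once $n$ is large. Inserting the two bounds for $\eta_n$ and putting $\kappa:=1/\mu$ gives the two displayed estimates, with $C$ depending only on $E$ (through $\mathrm{cap}\,E$, the H\"older data of $g_E$, the \L ojasiewicz-Siciak constants and the implicit constants of Theorem \ref{thm:bounds}).

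I expect the main obstacle to lie in the middle step: checking that ${\cal K}(P_n)$ really does sit inside a sublevel set $\{g_E\le c\eta_n\}$ and, in the reverse direction, that every point of $E$ is $O(\eta_n^{1/\mu})$-close to ${\cal K}(P_n)$. This is precisely where the escape-radius estimates and the careful choice of normalizing constants in \cite[Theorem 4.12]{arxiv} are used, and where one has to make sure that all constants depend on $E$ alone. By comparison, the translation of the $g_E$-estimate into a Hausdorff estimate via \L ojasiewicz-Siciak, and the bookkeeping separating the ranges $\alpha\in[1/2,1]$ and $\alpha\in(0,1/2)$, are routine.
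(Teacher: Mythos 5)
Your proposal is correct and follows essentially the same route as the paper: choose Fekete nodes, use Theorem \ref{thm:bounds} together with the H\"older modulus to get the rate $\eta_n$ (with the split at $\alpha=1/2$), run the Julia-set construction to squeeze $\mathcal{K}(P_n)$ between $E$ and a sublevel set $E_{c\eta_n}$ (this is exactly the paper's Theorem \ref{thm: filled} and Corollary \ref{wn:Gamma}, its sharpened in-house version of \cite[Theorem 4.12]{arxiv}), and then convert the $\Gamma$-bound into a $\chi$-bound via the \L ojasiewicz--Siciak inequality with $\kappa=1/\beta$ (the paper's Lemma \ref{lem:LS}). One small caveat: your parenthetical claim that pseudo Leja sequences of bounded Edrei growth would equally well realize the stated rate is not justified --- for those one only knows subexponential growth of the Lebesgue constants, which does not give $\tfrac1n\log\|L_n\|_E=O(n^{-1}\log n)$; the paper obtains the quantitative rate only for Fekete nodes (where $\|L^{(0)}(\cdot,\eta^{(n)})\|_E\le 1$) or for special pseudo Leja sequences with polynomially bounded Lebesgue constants as in Example \ref{exa:C2curve}.
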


The Green function plays an important role in our article.  In particular,  we explore approximation in the metric $\Gamma(E,F):=\max(||g_E||_F,\; ||g_F||_E)$, defined for compact, regular, polynomially convex subsets of $\mathbb{C}$.  This metric was introduced in \cite{pams}. In general it is not equivalent to the Hausdorff metric, so the results are of independent interest.

\section{The Green function
 and families of neighbourhoods}\label{s:green}

 \subsection{ The Green function }

\begin{defin}
 Let $D \subset \mathbb{C}$ be an unbounded domain. Consider a function  $g:D \longrightarrow \mathbb{R}$ with the following properties:
\begin{enumerate}
\item $g$ is harmonic and positive in $D$;

\item $g(z)$ tends to $0$ as $z \to \partial D$;

\item $g(z)-\log |z|$ tends to a finite number $\gamma$ as $z \to \infty$.
\end{enumerate}
If such a function exists, it is unique. We call $g$ the {\it Green function} of $D$ with pole at infinity.
\end{defin}

Consider a compact subset $E$ of $\cj$. Let $\widehat{E}$ denote    its polynomially convex hull and let  $D_\infty:=\cj\setminus \widehat{E}$. The set $E$  will be called {\it regular} if the Green function of $D_\infty$ with pole at infinity exists. Slightly abusing the terminology, we will denote this Green function  by $g_E$ and call it the  Green function of $E$.  We extend $g_E$ by 0 on $\widehat{E}$, which yields a continuous function in $\mathbb{C}$.  Note that it follows from this definition that $g_E=g_{\widehat{E}}$.
The number ${\rm cap }E =\exp (-\gamma)$ is called the \textit{logarithmic capacity}  of $E$ (and is the same as its \textit{transfinite diameter}).   For  more  background we refer the reader e.g. to \cite{Ransford}.

Let us fix some notations.

\begin{nota}\label{nota:zbiory}
Let $\varepsilon>0$, $n\in\nj$  and $E\subset \cj$.\\ Then
$E^\varepsilon:=\left\{z\in \cj: \dist\left(z, {E}\right)< \varepsilon\right\}$  is the $\varepsilon$-{\it dilation} of $E$ and
$D_n:=\left\{z\in \cj: \dist\left(z, \widehat{E}\right)\geq 1/n^2\right\}$.\\ If the set $E$ is compact and regular,
then $E_\varepsilon:=\left\{z\in \cj: g_E(z)\leq \varepsilon\right\}$ is the $\varepsilon$-sublevel set of $g_E$ and $\Omega_\varepsilon:=\left\{z\in \cj: g_E(z)>\varepsilon\right\}$.
\end{nota}

Observe  that $D_n=\cj\setminus  \left(\widehat{E}\right)^{1/n^2}$ and that $\Omega_\varepsilon=\cj\setminus E_\varepsilon$. We also have $\bigcup_{n=1}^\infty D_n=D_{\infty}$.

It is obvious that the family $\{E^\varepsilon\}_{\varepsilon >0}$ forms a neigbourhood base of the set $E$ in $\cj$.    It is  less obvious but also true  (see \cite[Corollary 1]{pams}) that for $E$ regular and polynomially convex  the family $\{E_\varepsilon\}_{\varepsilon >0}$ forms such a base too.  The sets $E_\varepsilon$ have the following nice properties:

\begin{prop}\label{prop:sublevel}  Let $E$ be a regular compact subset of $\cj$. Then:
\\ $(i)$  For every $\varepsilon >0$ the set $E_\varepsilon$ is polynomially convex.
\\$(ii)$  For every $\varepsilon >0$ the set $E_\varepsilon$ is regular.
\\$(iii)$  $E_{\varepsilon+\tau}=(E_{\varepsilon})_{\tau}$ for every $\varepsilon, \tau >0$.
\end{prop}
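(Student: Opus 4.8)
The plan is to establish the single identity
\[
g_{E_\varepsilon}=\max\bigl(g_E-\varepsilon,\,0\bigr)\quad\text{on }\cj,
\]
and to read off all three assertions from it. Two elementary observations come first. Since $E$ is regular, $g_E$ is continuous on $\cj$ and $g_E(z)-\log|z|\to\gamma$, so $g_E(z)\to\infty$ as $z\to\infty$; hence $E_\varepsilon=\{g_E\le\varepsilon\}$ is closed and bounded, i.e.\ compact. And since $g_E=0$ on $\widehat E$ we have $\widehat E\subset E_\varepsilon$, so $\Omega_\varepsilon=\cj\setminus E_\varepsilon\subset\cj\setminus\widehat E=D_\infty$, on which $g_E$ is harmonic.

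For $(i)$ I would invoke the equivalence recalled in the Introduction: a compact subset of $\cj$ is polynomially convex iff its complement is connected, so it suffices to show that $\Omega_\varepsilon$ is connected. Because $g_E\to\infty$ at infinity, the set $\{|z|>R\}$ lies in $\Omega_\varepsilon$ for $R$ large; being connected, it sits inside a single component, so $\Omega_\varepsilon$ has exactly one unbounded component. To exclude a bounded component $U$: as $U$ is a full connected component of the open set $\Omega_\varepsilon$, every $w\in\partial U$ lies outside $\Omega_\varepsilon$, hence $g_E(w)\le\varepsilon$, while $w$ is a limit of points of $U$ where $g_E>\varepsilon$, so also $g_E(w)\ge\varepsilon$; thus $g_E\equiv\varepsilon$ on $\partial U$. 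Since $U\subset D_\infty$, the function $g_E$ is harmonic on $U$, continuous on $\overline U$ and strictly larger than $\varepsilon$ inside $U$, contradicting the maximum principle. Therefore $\Omega_\varepsilon$ is connected, $E_\varepsilon$ is polynomially convex, and in particular $\cj\setminus\widehat{E_\varepsilon}=\Omega_\varepsilon$.

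For $(ii)$, knowing that $\Omega_\varepsilon$ is an unbounded domain, I would verify directly that $h:=g_E-\varepsilon$ has the three defining properties of its Green function with pole at infinity: $h$ is harmonic and, by the very definition of $\Omega_\varepsilon$, positive there; for $w\in\partial\Omega_\varepsilon$ the argument above again forces $g_E(w)=\varepsilon$, so $h(z)\to0$ as $z\to w$ by continuity; and $h(z)-\log|z|\to\gamma-\varepsilon$, a finite number, as $z\to\infty$. By the uniqueness in the definition, $h=g_{E_\varepsilon}$ on $\Omega_\varepsilon$, and extending by $0$ on $E_\varepsilon$ (consistent since there $g_E\le\varepsilon$) gives the displayed identity; in particular $E_\varepsilon$ is regular, with ${\rm cap}\,E_\varepsilon=e^{\varepsilon}\,{\rm cap}\,E$ as a by-product. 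Then $(iii)$ is immediate from the identity, since $\tau>0$:
\[
(E_\varepsilon)_\tau=\{g_{E_\varepsilon}\le\tau\}=\{\max(g_E-\varepsilon,0)\le\tau\}=\{g_E\le\varepsilon+\tau\}=E_{\varepsilon+\tau}.
\]

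The only step that requires genuine care is the connectedness argument in $(i)$: one must check that the topological boundary of a bounded component of $\Omega_\varepsilon$ is contained in the level set $\{g_E=\varepsilon\}$ and that this component lies in $D_\infty$, so that the maximum principle for harmonic functions really applies. Everything else is bookkeeping with the defining properties of $g_E$, which may be used freely under the standing regularity hypothesis on $E$.
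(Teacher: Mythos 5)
Your argument is correct and follows essentially the same route as the paper: everything is reduced to the identity $g_{E_\varepsilon}=\max(0,g_E-\varepsilon)$, which the paper simply cites (a proposition of M.~Mazurek, published as \cite[Proposition 5.11]{Mazurek}) while you supply a valid self-contained derivation of it. Your two supporting steps — excluding bounded components of $\Omega_\varepsilon$ by the maximum principle, and verifying the three defining properties of the Green function for $g_E-\varepsilon$ on $\Omega_\varepsilon$ — are both sound, so the only difference is that you prove the key lemma rather than invoking it.
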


\begin{proof}
All these properties follow from a proposition due to M. Mazurek (first published in \cite[Proposition 5.11]{Mazurek})  saying that $g_{E_\varepsilon}=\max(0,g_E-\varepsilon)$.
\end{proof}

 We will say that $\omega:[0,\infty)\longrightarrow \rj$ is a {\it modulus of continuity} of $g_E$ if $\lim_{\delta\rightarrow 0^+}\omega(\delta)=0$ and $g_E(z)\leq \omega(\delta)$ if $\dist(z, E)<\delta$.

 In a later section we will consider a special modulus of continuity and use the following lemma, which is also  a straightforward consequence of the formula $g_{E_\varepsilon}=\max(0,g_E-\varepsilon)$:

\begin{lem}\label{lem:holder}
Let $E$ be a regular compact subset of $\cj$. If $g_E$ is H\"older continuous and $\varepsilon>0$, then $g_{E_
\varepsilon}$ is also H\"older continuous with the same constant and exponent.
\end{lem}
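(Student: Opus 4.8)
The plan is to reduce the statement to the formula $g_{E_\varepsilon}=\max(0,\,g_E-\varepsilon)$ on $\cj$ used in the proof of Proposition~\ref{prop:sublevel}. This identity is available in our setting: $E$ is regular, so $E_\varepsilon$ is defined, and by Proposition~\ref{prop:sublevel} the set $E_\varepsilon$ is again compact, regular and polynomially convex, so $g_{E_\varepsilon}$ is a genuine Green function. Moreover the identity holds at every point of $\cj$: on $E_\varepsilon$ both sides vanish, while off $E_\varepsilon$ one has $g_E>\varepsilon$, hence $\max(0,g_E-\varepsilon)=g_E-\varepsilon$, which is exactly $g_{E_\varepsilon}$ there by Mazurek's formula on $\cj\setminus E_\varepsilon$.

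The key — and essentially the only — observation is that for fixed $\varepsilon>0$ the real function $\phi_\varepsilon(t):=\max(0,t-\varepsilon)$ is nonexpansive on $\rj$: being the pointwise maximum of the two $1$-Lipschitz functions $t\mapsto t-\varepsilon$ and $t\mapsto 0$, it satisfies $|\phi_\varepsilon(t)-\phi_\varepsilon(s)|\le|t-s|$ for all $s,t\in\rj$. Since $g_{E_\varepsilon}=\phi_\varepsilon\circ g_E$ on $\cj$, and $g_E$ is H\"older continuous, say with constant $C$ and exponent $\alpha$, we obtain for all $z,w\in\cj$
\[
|g_{E_\varepsilon}(z)-g_{E_\varepsilon}(w)|=|\phi_\varepsilon(g_E(z))-\phi_\varepsilon(g_E(w))|\le|g_E(z)-g_E(w)|\le C\,|z-w|^{\alpha},
\]
so $g_{E_\varepsilon}$ is H\"older continuous with the same constant $C$ and the same exponent $\alpha$.

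There is essentially no obstacle here: once the formula $g_{E_\varepsilon}=\max(0,g_E-\varepsilon)$ is in hand the assertion is immediate, and all the substance sits in that cited identity. The one point worth a word of care is the reading of ``$g_E$ is H\"older continuous'': we take it in the form $|g_E(z)-g_E(w)|\le C|z-w|^\alpha$ for $z,w\in\cj$, for which post-composition by the nonexpansive $\phi_\varepsilon$ preserves constant and exponent literally. Note that this two-sided estimate for $g_{E_\varepsilon}$ in turn yields, for $z\in\cj$ and any $w\in E_\varepsilon$ nearest to $z$, the one-sided bound $g_{E_\varepsilon}(z)=|g_{E_\varepsilon}(z)-g_{E_\varepsilon}(w)|\le C\,\dist(z,E_\varepsilon)^\alpha$, i.e.\ $g_{E_\varepsilon}$ admits $\delta\mapsto C\delta^\alpha$ as a modulus of continuity, which is the form in which the lemma will be used later.
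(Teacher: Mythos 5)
Your proof is correct and follows exactly the route the paper intends: the paper's own proof consists of the single remark that the lemma is a straightforward consequence of Mazurek's formula $g_{E_\varepsilon}=\max(0,g_E-\varepsilon)$, and your observation that $t\mapsto\max(0,t-\varepsilon)$ is nonexpansive is precisely the ``straightforward'' step made explicit. Your closing remark deriving the modulus-of-continuity form $g_{E_\varepsilon}(z)\le C\,\dist(z,E_\varepsilon)^\alpha$ is a useful addition, since that is the form in which the lemma is actually invoked later in the paper.
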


It is known that for every compact connected set $E$ containing more than two points its Green function $g_E$ is H\"older continuous.

In our paper  polynomial Julia sets play the  major role. The following remark  recalls their definition and some useful properties, including also H\"older continuity of the Green function.

\begin{rem}\label{rem:Julia}
For   a complex polynomial $P$ of degree at least 2, both sets $A_P(\infty)= \{z\in\cj: (P^{\circ n}(z))_{n=1}^\infty \ \text{is unbounded}\}$ and $\K(P):=\mathbb{C}\setminus A_{P}(\infty)$ are non-empty (here  $P^{\circ n}$ denotes the $n$-th iterate of $P$). Moreover,  $\partial \K(P) = \partial A_P(\infty)= \mathcal{J}(P)$, the {\it Julia set} of $P$. The set $\K(P)$, which we will call the \textit{filled-in Julia set} of $P$, is compact, polynomially convex and regular. It is known that the Green function of $\K(P)$ is  H\"older continuous, cf. \cite[Theorem VIII.3.2 and the remarks below it]{CG}.
\end{rem}

We will also use the following function defined for a bounded set $E \subset \mathbb{C}^N$ by J. Siciak (who significantly modified an earlier version introduced in one variable by F. Leja) in \cite{Siciak-extr} and further developed in \cite{Zah} and \cite{Mazurek}:

\begin{defin}\label{defin:extremalfcn} Let $E \subset \mathbb{C}$ be bounded.  We define
\[
\Phi_E(z) =\sup_{n \geq 1} \Phi_{n,E}(z), \quad z \in \mathbb{C},
\]
with
\[
\Phi_{n,E}(z)=\sup \{|p(z)|^{1/n}: p \in \mathcal{P}_n^*\},\quad z \in \mathbb{C},\  n \in \mathbb{N},
\]
 where  $\mathcal{P}_n^*$ denotes the space of all complex polynomials $p$ of degree at most $n$ such that  $||p||_E\leq 1$. 
\end{defin}

It is known that $\log \Phi_E = g_E$ for $E$ regular (see e.g. \cite[Theorem 7.3]{Mazurek}).
  This implies the following simple and well known fact, which will be useful later:
\begin{lem}\label{lemma:continuity} Let $E$ be a polynomially convex regular compact subset  of $\cj$, let $n$ be a natural number and let $\alpha_1,...,\alpha_n$ be distinct points in $E$. Then
\[
\lim_{z \to \infty}\log\frac{\Phi_E(z){\rm cap}E}{(\prod_{k=1}^n |z-\alpha_k|)^{1/n}}\,  =\lim_{z \to \infty}\log\frac{\Phi_E(z){\rm cap}E}{|z|}=0.
\]

\end{lem}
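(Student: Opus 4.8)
The plan is to reduce both limits to the defining asymptotics of the Green function at infinity. Since $E$ is polynomially convex, $\widehat{E}=E$, so $g_E$ is genuinely the Green function of $\mathbb{C}\setminus E$; and since $E$ is regular, the identity $\log\Phi_E=g_E$ recalled above is valid on all of $\mathbb{C}$. By property $(3)$ in the definition of the Green function, $g_E(z)-\log|z|$ tends to the finite number $\gamma$ as $z\to\infty$, and by definition $\mathrm{cap}\,E=\exp(-\gamma)$; hence $g_E(z)-\log|z|+\log\mathrm{cap}\,E\to 0$. Rewriting the left-hand side as $\log\Phi_E(z)-\log|z|+\log\mathrm{cap}\,E=\log\frac{\Phi_E(z)\,\mathrm{cap}\,E}{|z|}$ gives the second claimed limit.

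For the first limit it suffices to show that $\log\frac{(\prod_{k=1}^n|z-\alpha_k|)^{1/n}}{|z|}\to 0$ as $z\to\infty$; subtracting this from the second limit then yields the first, since $\log\frac{\Phi_E(z)\,\mathrm{cap}\,E}{(\prod_{k=1}^n|z-\alpha_k|)^{1/n}}=\log\frac{\Phi_E(z)\,\mathrm{cap}\,E}{|z|}-\log\frac{(\prod_{k=1}^n|z-\alpha_k|)^{1/n}}{|z|}$. But $\log\frac{(\prod_{k=1}^n|z-\alpha_k|)^{1/n}}{|z|}=\frac1n\sum_{k=1}^n\log\left|1-\frac{\alpha_k}{z}\right|$, and because the $\alpha_k$ lie in the bounded set $E$, each summand tends to $0$ as $z\to\infty$; being a finite sum divided by $n$, the whole expression tends to $0$.

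There is essentially no serious obstacle: the statement is a direct consequence of the normalization $g_E(z)=\log|z|-\log\mathrm{cap}\,E+o(1)$ at infinity, together with the elementary fact that a monic polynomial of degree $n$ grows like $|z|^n$. The only points worth stating explicitly are that regularity is exactly what licenses $\log\Phi_E=g_E$, so that $\Phi_E$ really has the stated logarithmic growth, and that polynomial convexity guarantees $g_E=g_{\widehat E}$ is the Green function of $\mathbb{C}\setminus E$ itself; neither requires anything beyond quoting the facts already assembled above.
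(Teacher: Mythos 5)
Your proof is correct and follows exactly the route the paper intends: the lemma is stated there as a "simple and well known fact" following from the identity $\log\Phi_E=g_E$ for regular $E$, combined with the normalization $g_E(z)-\log|z|\to\gamma=-\log\mathrm{cap}\,E$ at infinity and the elementary observation that $\bigl(\prod_{k=1}^n|z-\alpha_k|\bigr)^{1/n}/|z|\to 1$. Nothing is missing.
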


 \subsection{Families of neighbourhoods of planar sets}

 In our approximation we will use two metrics  on some families of subsets of $\cj$. The first one is the Hausdorff metric.
Let us recall two equivalent definitions. If $X, Y$ are bounded and nonempty we can put
\begin{eqnarray*}
\chi(X,Y)&:=&\max(||\dist(\cdot, X)||_Y,\; ||\dist(\cdot, Y)||_X)\\
&=&\inf\{ \varepsilon >0: X \subset Y^\varepsilon, \; Y \subset  X^\varepsilon\}.
\end{eqnarray*}
This $\chi$ is a pseudometric and if we restrict it to nonempty compact sets, it is a metric.

In \cite{pams}, M. Klimek defined  another  function for pairs of compact regular sets using their Green functions. Namely,
$$
\Gamma(E,F):=\max(||g_E||_F,\; ||g_F||_E)
$$
for $E, F$  regular. Observe that we  also have
$$
\Gamma(E,F)=\inf\{\varepsilon>0: E\subset F_\varepsilon,\; F\subset E_\varepsilon\}.
$$
 This $\Gamma$ is  a pseudometric  which is a metric under restriction to
the space of regular compact polynomially convex sets.  Since $\Gamma(E,F)=||g_E-g_F||_\cj$ (see \cite{pams}), the convergence of a sequence of regular sets in $\Gamma$ is equivalent to the uniform convergence of their Green functions.\\

Let $E, F$ be regular compact subsets of $\cj$. Then $\Gamma(E,F)=\Gamma(\widehat{E},\widehat{F})$. However, in general $\chi(E,F)\neq \chi(\widehat{E},\widehat{F})$. Consider for instance $E:=\{z\in \cj: |z|=1\}$ and $F:=\{z\in \cj: |z|\leq 1\}$. We have then $\chi(E,F)=1$ and $\chi(\widehat{E},\widehat{F})=0$, since $\widehat{E}=\widehat{F}=F$.

 In this article we are interested in approximation of compact sets by filled-in Julia sets, which are polynomially convex. It will be done from above (with respect to inclusion of sets), i.e.,   in such a way that the approximating sets are  supersets of the approximated one (unlike  e.g. in the case considered in \cite{ivrii1}).

 Before we start any computations,
let us look  at a more general situation regarding a choice of a family of sets approximating a given set. First, for a nonempty compact set $E$, we can  consider the most natural family of compact sets approximating this set from above in the Hausdorff metric, namely the descending family of closures of dilations of $E$, i.e. $\{\overline{E^\varepsilon}\}_{\varepsilon>0}$. We have $\chi(E, \overline{E^\varepsilon})=\varepsilon$.

\begin{prop}\label{prop:otoczki}  Let $E$ be a compact subset of $\cj$. Then:

$(i)$ For every $\varepsilon>0$ the set $\overline{E^\varepsilon}$ is regular.

$(ii)$ If $E$ is regular, then the family $\{\overline{E^\varepsilon}\}_{\varepsilon>0}$  approximates $E$ in the $\Gamma$ metric.
\end{prop}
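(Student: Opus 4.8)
The plan is to prove the two parts separately, relying on the structural facts about Green functions collected earlier in this section (especially the Mazurek formula $g_{E_\varepsilon}=\max(0,g_E-\varepsilon)$ from the proof of Proposition \ref{prop:sublevel}, and the characterization $\Gamma(E,F)=\|g_E-g_F\|_{\cj}$).

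For part $(i)$, I would first observe that $\overline{E^\varepsilon}$ is the closed $\varepsilon$-neighbourhood of $E$, hence a compact set each of whose points lies within distance $\varepsilon$ of $E$; more importantly, every boundary point $\zeta\in\partial\overline{E^\varepsilon}$ is the center of a closed disk of radius $\varepsilon$ (centered at a nearest point of $E$) that is contained in $\overline{E^\varepsilon}$ and touches $\partial\overline{E^\varepsilon}$ only at $\zeta$. This is the classical \emph{exterior corkscrew / cone condition}, and in fact one gets an honest exterior disk: the complement of $\overline{E^\varepsilon}$ contains, near $\zeta$, a disk tangent at $\zeta$. Regularity of a compact set for the Dirichlet problem (equivalently, existence of the Green function of the unbounded component of its complement with pole at infinity) is a local property of the boundary, and the exterior disk condition is a standard sufficient criterion for regularity (via barriers; see e.g. \cite{Ransford}). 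So I would state that $\partial\overline{E^\varepsilon}$ satisfies the exterior disk condition at every point, invoke the barrier characterization of regular boundary points, and conclude regularity. One should be a little careful about the interaction with the polynomially convex hull: $g_{\overline{E^\varepsilon}}$ is by our convention the Green function of $\cj\setminus\widehat{\overline{E^\varepsilon}}$, but since $\widehat{\overline{E^\varepsilon}}\supseteq\overline{E^\varepsilon}$ only adds the bounded complementary components, the relevant boundary points (those accessible from $D_\infty$) are all points of $\partial\widehat{\overline{E^\varepsilon}}$, and each such point still has the exterior disk in the unbounded complementary component; so the argument goes through verbatim.

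For part $(ii)$, assume $E$ is regular. I want to show $\Gamma(E,\overline{E^\varepsilon})\to 0$ as $\varepsilon\to 0^+$, which by the identity $\Gamma(E,F)=\|g_E-g_F\|_{\cj}$ is the same as uniform convergence $g_{\overline{E^\varepsilon}}\to g_E$ on $\cj$. Since $E\subset\overline{E^\varepsilon}$ we have $g_{\overline{E^\varepsilon}}\le g_E$ everywhere, so it suffices to bound $g_E-g_{\overline{E^\varepsilon}}$ from above uniformly by a quantity tending to $0$. Here is where I would use the sublevel sets: by Proposition \ref{prop:sublevel} and its proof, $E_\delta=\{g_E\le\delta\}$ is regular, polynomially convex, and $g_{E_\delta}=\max(0,g_E-\delta)$, so $\|g_E-g_{E_\delta}\|_{\cj}\le\delta$. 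Moreover, since $E$ is regular, $g_E$ is continuous and vanishes on $E$, hence for every $\delta>0$ there is $\varepsilon(\delta)>0$ with $\{g_E\le\delta\}\supseteq\overline{E^{\varepsilon(\delta)}}$ — equivalently, $\dist(z,E)<\varepsilon(\delta)$ forces $g_E(z)\le\delta$ (this is just uniform continuity of $g_E$ together with $g_E|_E=0$, and is exactly the statement that $\omega$ defined before Lemma \ref{lem:holder} is a genuine modulus of continuity). Then $E\subset\overline{E^{\varepsilon(\delta)}}\subset E_\delta$, and by monotonicity of the Green function under inclusion, $g_E\ge g_{\overline{E^{\varepsilon(\delta)}}}\ge g_{E_\delta}\ge g_E-\delta$ on all of $\cj$. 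Hence $\|g_E-g_{\overline{E^{\varepsilon}}}\|_{\cj}\le\delta$ for all $\varepsilon\le\varepsilon(\delta)$, i.e. $\Gamma(E,\overline{E^\varepsilon})\le\delta$, which proves the claim.

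The main obstacle is purely in part $(i)$: one must make sure that ``regular'' in our sense (Green function of the unbounded complementary component of $\widehat{\overline{E^\varepsilon}}$) really is controlled by the exterior-disk condition on $\partial\overline{E^\varepsilon}$, despite the passage to the polynomially convex hull. The resolution, as sketched above, is that taking the hull only fills in bounded holes and therefore does not destroy the exterior disk sitting in the unbounded component; and that a single exterior disk at a boundary point already yields a Wiener/barrier-type regularity at that point. Part $(ii)$ is then essentially a soft consequence of the Mazurek formula plus continuity of $g_E$ and the sandwich $E\subset\overline{E^\varepsilon}\subset E_{\omega(\varepsilon)}$, with no real difficulty.
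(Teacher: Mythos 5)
Your proposal is correct, but it diverges from the paper most notably in part $(i)$. The paper simply cites \cite[Corollary 5.1.5]{Kksiazka} for the regularity of $\overline{E^\varepsilon}$, whereas you give a self-contained argument: every $\zeta\in\partial\overline{E^\varepsilon}$ satisfies $\dist(\zeta,E)=\varepsilon$, so the closed disk of radius $\varepsilon$ about a nearest point of $E$ lies in $\overline{E^\varepsilon}$ and meets the boundary at $\zeta$, and such a disk in the complement of the domain $D_\infty$ is a standard barrier criterion; you also correctly note that passing to the hull only fills bounded holes, so every point of $\partial\widehat{\overline{E^\varepsilon}}$ (which is contained in $\partial\overline{E^\varepsilon}$) retains its disk. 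This is a legitimate and arguably more informative route than the citation. One sentence needs repair: ``the complement of $\overline{E^\varepsilon}$ contains, near $\zeta$, a disk tangent at $\zeta$'' has the inclusion backwards --- the disk relevant for regularity sits \emph{inside} $\overline{E^\varepsilon}$ (i.e., in the complement of the domain $D_\infty$), exactly as you described it in the preceding clause; a disk in $\cj\setminus\overline{E^\varepsilon}$ would say nothing about regularity. Since the correct disk is exhibited, this is a slip of terminology, not a gap. For part $(ii)$ the paper is more direct: because $E\subset\overline{E^\varepsilon}$ one has $\Gamma(E,\overline{E^\varepsilon})=\|g_E\|_{\overline{E^\varepsilon}}$, which tends to $0$ by continuity of $g_E$ and $\bigcap_{\varepsilon>0}\overline{E^\varepsilon}=E$. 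Your detour through the sandwich $E\subset\overline{E^{\varepsilon(\delta)}}\subset E_\delta$, the Mazurek formula and monotonicity of Green functions is valid and yields the same quantitative bound $\Gamma(E,\overline{E^\varepsilon})\le\delta$ for $\varepsilon\le\varepsilon(\delta)$, but it is not needed for the qualitative statement being proved.
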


\begin{proof}
 The statement $(i)$ is a part of  \cite[Corollary 5.1.5]{Kksiazka}. To prove
$(ii)$, note that, since $\bigcap_{\varepsilon >0} \overline{E^\varepsilon}=E$, we have
$$\lim_{\varepsilon\rightarrow 0} \Gamma(\overline{E^\varepsilon},E) = \lim_{\varepsilon\rightarrow 0}||g_E||_{\overline{E^\varepsilon}}=0.$$
\end{proof}

\begin{rem} Note that $\overline{E^\varepsilon}$ may happen not to be  polynomially convex  even if $E$ is so.  For example, $E=\{e^{2\pi i t}: t\in[0; 0.9]\}$ is polynomially convex and $\overline{E^\varepsilon}$ is not  e.g. if $\varepsilon\in(0.05; 0.4)$.
\end{rem}

Before we look at another family of sets let us note here that if $A$ is bounded    but not necessarily closed, then we can take its polynomially convex hull to be   $\widehat{A}=\widehat{\overline{A}}$.

Consider now the polynomially convex hulls of the dilations, namely  $\left\{\widehat{{E^\varepsilon}}\right\}_{\varepsilon>0}$. This is once again a descending family and we have (cf. \cite[p. 376]{nivoche})
$$\bigcap_{\varepsilon >0} \widehat{{E^\varepsilon}}=\widehat E.$$
Thus if $E$ is polynomially convex, then
\begin{equation}\label{convdilation}
\lim_{\varepsilon \rightarrow 0} \chi\left(E, \widehat{{E^\varepsilon}}\right)=0.
\end{equation}
Observe however that $\chi\left(E, \widehat{{E^\varepsilon}}\right)\geq \varepsilon$, e.g. if $E=\{e^{2\pi i t}: t\in[0; 0.9]\}$ we have $\chi\left(E, \widehat{{E^{0.1}}}\right)=0.9>0.1$. Since $g_F\equiv g_{\widehat F}$ for any regular set $F$,  it follows from Proposition \ref{prop:otoczki} that the family $\left\{\widehat{{E^\varepsilon}}\right\}_{\varepsilon>0}$ approximates $E$ also in the metric $\Gamma$  if $E$ is regular.

Note that if $E$ is regular and polynomially convex, one can also take the sublevel sets of the Green function $g_E$, namely $\{E_\varepsilon\}_{\varepsilon>0}$, as an approximating family. This is once again a descending family of polynomially convex sets (as mentioned before) and it approximates $E$ in both metrics; however, in general we cannot estimate the distance $\chi(E, E_\varepsilon)$ even though we know that $\Gamma(E,E_\varepsilon)=\varepsilon$.  Estimates in some special cases will be given in a later section.

    In our approximation process it will be important to consider neighbourhoods of $E$ with   boundaries that are finite unions of quasiconformal arcs. We will give a precise definition of a quasiconformal arc in Section \ref{s:lagrange}. Here we recall that a sufficient condition for an arc   to be quasiconformal is to be of class $\mathcal{C}^2$.  Neighbourhoods with such boundaries are supplied by the family $\{E_\varepsilon\}$, thanks to the following proposition  (see \cite[Section 2.1, Theorem and subsequent discussion on p. 19]{Du}):

\begin{prop}[\mbox{cf. \cite[Section 2.1]{Du}}] \label{prop:C2arcs} The level set of a non-constant harmonic function $u$ through a critical point consists locally of two or more analytic arcs intersecting with equal angles at $z_0$. If $z_0$ is a regular point of $u$, then the level set of $u$ is locally a single analytic arc passing through $z_0$.
\end{prop}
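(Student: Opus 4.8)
The plan is to reduce the statement to the elementary local structure of holomorphic functions. First I would fix $z_0$ and work on a disc $B$ centred at $z_0$. On such a disc the non-constant harmonic function $u$ admits a harmonic conjugate $v$, so $f:=u+iv$ is holomorphic and non-constant, and $\nabla u=0$ exactly where $f'=0$; since $f'$ is holomorphic and not identically zero, its zeros are isolated, hence the critical points of $u$ are isolated, and after shrinking $B$ we may assume $z_0$ is its only possible critical point. This is what makes the word ``locally'' meaningful. Subtracting constants, assume $z_0=0$ and $f(0)=u(z_0)=0$; then the level set of $u$ through $z_0$ is precisely $f^{-1}(i\rj)$, and the whole problem becomes one of describing this preimage of the imaginary axis near $0$.

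If $z_0$ is a regular point of $u$, then $f'(z_0)\ne 0$, so after a further shrinking of $B$ the map $f$ is a conformal homeomorphism of $B$ onto a disc $U$ centred at $0$. The level set is then the image under the holomorphic inverse $f^{-1}$ of the segment $U\cap i\rj$, and since $(f^{-1})'$ is nowhere zero this image is a single analytic arc passing through $z_0$, which gives the second assertion.

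If $z_0$ is a critical point, let $m-1\ge 1$ be the order of the zero of $f'$ at $z_0$, so $m\ge 2$. The one ingredient with any content is the standard normal form for holomorphic maps near $z_0=0$: writing $f(z)=z^{m}h(z)$ with $h$ holomorphic near $0$ and $h(0)\ne 0$, I would pick a holomorphic $m$-th root of $h$ and set $\varphi(z):=z\,h(z)^{1/m}$; then $\varphi'(0)\ne 0$, so after shrinking $B$ the map $\varphi$ is a conformal homeomorphism of $B$ onto a disc $U$ centred at $0$ and $f=\varphi^{m}$. Hence the level set of $u$ through $z_0$ equals $\varphi^{-1}\bigl(\{w\in U:\operatorname{Re}(w^{m})=0\}\bigr)$. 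Writing $w=\rho e^{i\theta}$, the condition $\operatorname{Re}(w^{m})=0$ becomes $\cos(m\theta)=0$, i.e. $\theta\in\{\tfrac{\pi}{2m}+\tfrac{k\pi}{m}:k=0,\dots,2m-1\}$; these are $2m$ radial segments from $0$, equally spaced by the angle $\pi/m$, i.e. $m$ diameters through $0$. Since $\varphi^{-1}$ is conformal, it maps each radial segment to an analytic arc through $z_0$ and preserves the angles at which they meet, so the level set consists locally of $m\ge 2$ analytic arcs meeting at $z_0$ at equal angles $\pi/m$, as claimed. I do not expect a genuine obstacle here: beyond invoking the normal form $f=\varphi^{m}$, the only care needed is in the bookkeeping of the successive shrinkings of $B$ and in reading ``analytic arc'' as a real-analytic embedded arc, which is immediate since here these arcs are conformal images of line segments.
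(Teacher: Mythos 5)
Your proof is correct. The paper itself gives no proof of this proposition, simply citing Duren's book (Section 2.1); your argument --- passing to the local analytic completion $f=u+iv$, reducing the level set to $f^{-1}(i\mathbb{R})$, and applying the normal form $f=\varphi^{m}$ with $\varphi$ conformal to get $m\ge 2$ equally spaced analytic arcs (a single arc when $f'(z_0)\ne 0$) --- is precisely the standard one found in that reference.
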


In fact,   for almost all $\varepsilon > 0$ each connected component of the level set $\{z\in \cj: g_E(z)= \varepsilon\}$   is  a simple closed real analytic curve in $\mathbb{C}\setminus E$ (cf.  \cite[Exercise XV.7.8]{Gamelin}). Smoothness of boundaries of sets in the family $\{E^\varepsilon\}$ is a much more sophisticated issue and will not be discussed here.

\section{Lagrange interpolation polynomials}\label{s:lagrange}

 Before we start dealing specifically with  Lagrange polynomials, let us recall a useful result concerning polynomials in general. Here and throughout the paper we do not take into consideration the polynomial which is identically zero.

Let $E$ be a compact planar set. We will use the following notation:
\begin{equation}\label{Mn}
M_n=M_n(E):=\sup \frac{\|p'\|_E}{\|p\|_E},
\end{equation}
where the supremum is taken over all polynomials $p$ of degree at most $n$ with complex coefficients. In other words, $M_n$ is the norm of the differentiation operator $p\mapsto p'$ acting between two finite dimensional spaces of polynomials with the norm $\|\cdot \|_E$. Sometimes $M_n$ is said to be the ($n$-th) \textit{Markov constant}  because of the following obvious consequence of formula (\ref{Mn}),
\begin{equation}\label{Markov}
\|p'\|_E \le M_n \, {\|p\|_E} \ \ \textrm{for all polynomials of degree not exceeding} \ n,
\end{equation}
which is called a \textit{Markov-type inequality.}
For any regular set the sequence of Markov constants is of subexponential growth, i.e., the following  is true:

\begin{prop}[\mbox{\cite[Theorem 1]{To95}}]\label{prop:Totik} If $E$ is a compact regular set in $\mathbb{C}$ then \[\lim_{n\rightarrow \infty} \sqrt[n]{M_n(E)}=1.\]
\end{prop}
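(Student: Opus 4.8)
The plan is to establish the two inequalities $\liminf_{n\to\infty}\sqrt[n]{M_n(E)}\ge 1$ and $\limsup_{n\to\infty}\sqrt[n]{M_n(E)}\le 1$ separately, the second being the substantial part. Its proof rests on the Bernstein--Walsh inequality --- which in the present setting is immediate from the identity $\log\Phi_E=g_E$ recalled above --- together with the Cauchy integral formula for the derivative applied on a circle of \emph{fixed} radius, independent of $n$. The crucial observation is that, since $g_E$ vanishes on $E$ and is continuous on $\cj$, it stays below any prescribed $\eta>0$ throughout a fixed neighbourhood of $E$, so that $e^{ng_E}\le e^{n\eta}$ there.

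To prove $\limsup_{n\to\infty}\sqrt[n]{M_n(E)}\le 1$, fix $\eta>0$. The set $\{z\in\cj:g_E(z)\ge\eta\}$ is closed, because $g_E$ (extended by $0$ on $\widehat{E}$) is continuous on $\cj$, and it is disjoint from the compact set $E$; hence $d:=\dist\!\big(E,\{z\in\cj:g_E(z)\ge\eta\}\big)>0$ and $g_E(z)<\eta$ whenever $\dist(z,E)<d$. Put $r:=d/2$. Let $p$ be any polynomial with $\deg p\le n$ and $\|p\|_E\le1$, so that $p\in\mathcal{P}_n^*$; then by Definition \ref{defin:extremalfcn} and the identity $\log\Phi_E=g_E$,
\[
|p(z)|^{1/n}\le\Phi_{n,E}(z)\le\Phi_E(z)=e^{g_E(z)},\qquad z\in\cj,
\]
that is, $|p(z)|\le e^{ng_E(z)}$ for all $z\in\cj$. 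Fix $z_0\in E$; for $z$ on the circle $\Gamma=\{z\in\cj:|z-z_0|=r\}$ one has $\dist(z,E)\le r<d$, hence $|p(z)|\le e^{n\eta}$, and the Cauchy formula $p'(z_0)=\frac{1}{2\pi i}\oint_\Gamma\frac{p(z)}{(z-z_0)^2}\,dz$ gives $|p'(z_0)|\le r^{-1}e^{n\eta}$. Since this holds for every $z_0\in E$ and every such $p$, and $M_n(E)=\sup\{\|p'\|_E:\deg p\le n,\ \|p\|_E\le1\}$, we get $M_n(E)\le r^{-1}e^{n\eta}$, hence $\sqrt[n]{M_n(E)}\le r^{-1/n}e^{\eta}$. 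Letting $n\to\infty$ and then $\eta\downarrow0$ yields the claim.

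For the reverse inequality, recall that a regular compact set has ${\rm cap}\,E>0$. Let $T_n$ be the $n$-th Chebyshev polynomial of $E$, i.e.\ the monic polynomial of degree $n$ for which $t_n:=\|T_n\|_E$ is least among all monic polynomials of degree $n$. Since $\tfrac1n T_n'$ is monic of degree $n-1$, we have $\|T_n'\|_E\ge n\,t_{n-1}$, whence
\[
M_n(E)\ge\frac{\|T_n'\|_E}{\|T_n\|_E}\ge\frac{n\,t_{n-1}}{t_n}.
\]
It is classical (see e.g.\ \cite{Ransford}) that $t_m^{1/m}\to{\rm cap}\,E\in(0,\infty)$ as $m\to\infty$; consequently $(t_{n-1})^{1/n}\to{\rm cap}\,E$ and $(t_n)^{1/n}\to{\rm cap}\,E$, so $(t_{n-1}/t_n)^{1/n}\to1$, and together with $n^{1/n}\to1$ this gives $\liminf_{n\to\infty}\sqrt[n]{M_n(E)}\ge1$. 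Combining the two estimates proves $\lim_{n\to\infty}\sqrt[n]{M_n(E)}=1$.

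The argument is essentially standard; the single point requiring care is in the upper estimate, where one should resist integrating over the $n$-dependent level curve $\{z\in\cj:g_E(z)=1/n\}$ --- whose length and distance to $E$ are awkward to control, especially when $g_E$ is only H\"older continuous --- and instead integrate over a contour at a \emph{fixed} positive distance from $E$, the price being only the harmless factor $e^{n\eta}$. A minor point left implicit is the finiteness of $M_n(E)$: this holds because $E$, being regular, is infinite, so $\|\cdot\|_E$ is a genuine norm on the finite-dimensional space of polynomials of degree at most $n$, on which differentiation is then automatically bounded.
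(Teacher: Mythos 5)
The paper does not prove this proposition: it is quoted verbatim as Totik's result (\cite[Theorem 1]{To95}), so there is no internal proof to compare against. Your argument is correct and is essentially the standard proof of the ``regular $\Rightarrow$ subexponential Markov constants'' direction: the upper bound via the Bernstein--Walsh inequality $|p|\le\|p\|_E e^{n g_E}$ combined with Cauchy's formula on a circle of fixed radius inside the region where $g_E<\eta$, and the lower bound via Chebyshev polynomials, using $\|T_n'\|_E\ge n\,t_{n-1}$ and $t_m^{1/m}\to\operatorname{cap}E>0$ (positivity of the capacity being guaranteed by regularity, as you note). All the small points that need checking are in order: $\{g_E\ge\eta\}$ is closed and disjoint from the compact set $E$, so $d>0$; a regular set is infinite, so $M_n(E)$ is finite; and $(t_{n-1}/t_n)^{1/n}\to1$ because both $n$-th roots tend to the same positive finite limit. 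The only remark worth adding is that your lower bound in fact uses nothing beyond $\operatorname{cap}E>0$, while the upper bound is where regularity is genuinely needed --- consistent with the fact that Totik's theorem is actually an equivalence.
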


Let us fix now another list of notations.
\begin{nota} \label{nota:lagrange} Let $n\in \nj$ and $E$ be a compact subset of $\cj$.
 Consider an array $A$ of distinct points in $E$, i.e. $A=\{\zeta^{(n)}: n\geq 1\}$ where $\zeta^{(n)}=\{\zeta_0^{(n)}
,...,\zeta_n^{(n)}
\}$, $n\in\nj$,  is an $(n+1)$-tuple of distinct points in $E$. We put
\begin{itemize}
\item $\Theta(E):=(2+{\rm diam}E)^{1/3}$;
\item $w_n(z)=w_n(z;\zeta^{(n-1)})=(z-\zeta_0^{(n-1)})...(z-\zeta_{n-1}^{(n-1)})$;
 \item $V(\zeta^{(n)})=\prod_{0\leq j <k \leq n} \left|\zeta_j^{(n)} -\zeta_k^{(n)}\right|$.
\end{itemize}
Moreover, if $j\in\{0,1,...,n\}$, we put
\begin{itemize}
\item $\Delta^{(j)}(\zeta^{(n)}):=\prod_{k\in\{0,...,n\}\setminus\{j\}}
\left(\zeta_j^{(n)}-\zeta_k^{(n)}\right)$;
\item  $L^{(j)}(z,\zeta^{(n)}):=\left(\Delta^{(j)}(\zeta^{(n)})\right)^{-1}\cdot\prod_{k\in\{0,...,n\}\setminus\{j\}} \left(z-\zeta_k^{(n)}\right) .$
\end{itemize}
\end{nota}
\noindent $L^{(j)}(\cdot,\zeta^{(n)})$ is the $j$-th {\it fundamental Lagrange interpolation polynomial with nodes} $\zeta^{(n)}$.  Note that each $L^{(j)}(\cdot, \zeta^{(n)})$ is a polynomial of degree $n$.

To get polynomials whose Julia sets approximate a given regular set $E$ we will modify some polynomials occurring in approximation of the Green function $g_E$.  For our purposes we will consider fundamental Lagrange interpolation polynomials with  nodes that guarantee good convergence properties, in particular Fekete points and pseudo Leja points.

In the following subsections we will first define the important notion of the Lebesgue constants, then list some facts considering Fekete extremal points, investigate the pseudo Leja sequences and finally prove some general convergence results.

\subsection{Subexponential growth of the Lebesgue constants}

\begin{defin}\label{def:Lebesgue}
Let $E$ be a compact planar set and $A=\{\zeta^{(n)}: n\geq 1\}$
be an array of distinct points in $E$. First, we define the ($n$-th) {\it Lebesgue function}, i.e.
\[ \Lambda_n (z) = \Lambda_n (z, \zeta^{(n)}) := \sum_{j=0}^n |L^{(j)}(z,\zeta^{(n)})|, \qquad z\in \mathbb{C}.\]
 The $n$-th {\it Lebesgue constant} is defined by the formula
\[ \Lambda_n (E) = \Lambda_n(E, \zeta^{(n)}) := \max_{z\in E} \Lambda_n(z,\zeta^{(n)}) = \|\Lambda_n\|_E.\]
\end{defin}

These constants are of outstanding importance in  interpolation theory, see e.g. \cite{BBCL}, \cite{TT}.

By classical interpolation theorems  (see e.g. \cite{BBCL}), if $E$ is a polynomially convex regular compact set and the sequence of Lebesgue constants $(\Lambda_n(E, \zeta^{(n)}))_{n=1}^\infty$ is of subexponential growth, i.e.
\[ \lim_{n\rightarrow\infty} \Lambda_n(E,\zeta^{(n)})^{1/n} = 1\]
then e.g.:
\begin{itemize}
\item the discrete measures  $\mu_n =(1/n) \sum_{j=0}^{n-1}\delta_{\zeta^{(n)}_j}$ converge in the weak* topology to $\mu_E$, where $\delta_{\zeta}$ denotes the Dirac measure supported in $\{\zeta\}$ and $\mu_E$ is the equilibrium measure of $E$;
\item $\lim\limits_{n\rightarrow\infty} |w_n(z)|^{1/n} = {\rm cap}\,E\,\cdot \exp(g_E(z))$ \ for any \ $z\in \mathbb{C}\setminus \widehat{E}$;
\item $\lim\limits_{n\rightarrow\infty} \|w_n\|_E^{1/n} = {\rm cap}\,E$;
\item for any function $f$ holomorphic in a neighbourhood of $E$ 
    its interpola\-tion polynomials with nodes $\zeta^{(n)}$ converge uniformly~on~$E$~to~$f$ (and the convergence is  geometrically fast, i.e. if $p_n$ is the $n$-th interpolating polynomial, then $\|f-p_n\|_E = O(k^n)$ where $k=k(f)<1$).
\end{itemize}

Taking into account the above consequences, we can see that a crucial problem for a given array $A=\{\zeta^{(n)}: n\geq 1\}$
of distinct points in a compact set of $E$ is whether
\begin{equation}\label{sublambda} \lim_{n\rightarrow\infty} \Lambda_n(E,\zeta^{(n)})^{1/n} = 1.\end{equation}

\subsection{Extremal Fekete points}

Consider a regular compact planar set $E$. Let $\eta^{(n)}$ be a Fekete (extremal) $(n+1)$-tuple for $E$  (i.e., an $(n+1)$-tuple maximizing  the absolute value of the Vandermonde determinant, $V(\zeta^{(n)})$, in $E$). We order the points of $\eta^{(n)}$ so that
\begin{equation}\label{order}
|\Delta^{(0)}(\eta^{(n)})| \leq \min_{j\in\{1,...,n\}}|\Delta^{(j)}(\eta^{(n)})|.
\end{equation}

 It is easy to see that
\begin{equation}\label{zgory}
|L^{(j)}(z,\eta^{(n)})|\leq 1,\qquad z\in E, \; j\in\{0,1,...,n\}.
\end{equation}
Furthermore, let $\omega$ be a modulus of continuity of $g_E$. By \cite[Theorem 1]{Siciak}, if $z\in D_n$, then
\begin{eqnarray}\label{FeketeGreenbound}
\qquad 0 \leq g_E(z)-\log \sqrt[n]{|L^{(0)}(z, \eta^{(n)})|}\leq \frac3n\log[(n+1)\Theta(E)]+\omega\left(\frac1{n^2}\right).
\end{eqnarray}
 (Recall that $\Theta(E)$ was defined in Notations \ref{nota:lagrange}).

For extremal Fekete points it is well known and easy to
prove that the Lebesgue constants are of subexponential growth.

\subsection{Pseudo Leja sequences}

Let $E$ be a compact planar set. We define some sequences of extremal points in $E$ associated with the name of F. Leja, who introduced the classical version of such points.

\begin{defin}[\mbox{\cite[Definition 1]{BCC}}]\label{def:psleja} Let $(C_n)_{n=1}^\infty$ be a sequence of real numbers such that
\begin{equation} \label{Edrei}  \qquad
\lim_{n \to \infty} (C_n)^{1/n}=1.
\end{equation}
A sequence of points $a_n \in E, \  n\in\{0,1,2,...\}$, is a  \textit{pseudo Leja sequence in} $E$ \textit{of Edrei growth} $(C_n)_{n=1}^\infty$ if
\begin{equation} \label{pseudoL}
C_n |w_n(a_n)| \geq ||w_n||_E \ \ \ \textrm{for  all} \ n\in\{1,2,...\}.
\end{equation}
The sequence $(a_n)_{n=0}^\infty$ is sometimes simply called a pseudo Leja sequence if it satisfies inequality (\ref{pseudoL}) for some $(C_n)_{n=1}^\infty$ such that condition (\ref{Edrei}) holds. In the case when $C_n=1$ for all $n$, the sequence $(a_n)_{n=0}^\infty$ satisfying (\ref{pseudoL}) is a {\it classical Leja sequence}. We will be mainly concerned with pseudo Leja sequences of {\it bounded Edrei growth}, i.e., the case when $C:=\sup C_n<\infty$.
\end{defin}

Observe that points of a pseudo Leja sequence do not need to belong to the boundary of the set $E$. However,  we will consider only  pseudo Leja sequences that are contained in the outer boundary of $E$, which we can do without loss of generality and in a natural analogy with other classical interpolation nodes (e.g. Fej\'er or Fekete points).

As described in the Introduction, points of pseudo Leja sequences are relatively easy to calculate for any infinite compact
set. In order to find $a_n$  it is sufficient to take an arbitrary point from each of pairwise externally tangent disks of radius $r_n$ covering the outer boundary of $E$ and to verify which of them satisfies the inequality (\ref{pseudoL}). This procedure is justified by \cite[Theorem 4]{BCC} for so-called Markov sets. More generally, an analogous procedure for  infinite compact
sets can be proved in the same manner as in \cite{BCC}  (with $M_n$ in the following proposition  defined as in (\ref{Mn})).

\begin{prop}[\mbox{cf. \cite[Theorem 4]{BCC}}]  If $E\subset \mathbb{C}$ is an infinite compact  set and $(a_n)$ is a pseudo Leja sequence in $E$ of Edrei growth $(C_n)$, then
there exists a closed disk $\D$ of radius $\frac1{M_n}\log(2-\frac1{C_n})$ such that $\D\cap E\neq\emptyset$ and every point $z$ of $\D$ satisfies​ the inequality $C_n|w_n(z)| \ge \|w_n\|_E$. \end{prop}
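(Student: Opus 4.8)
The plan is to follow the argument of \cite[Theorem 4]{BCC}, adapting it from Markov sets to arbitrary infinite compact sets by carrying $M_n=M_n(E)$ through the estimates. Fix $n\ge 1$ and let $a_n$ be the $n$-th point of the pseudo Leja sequence, so that $C_n|w_n(a_n)|\ge\|w_n\|_E$. The idea is that $w_n$ does not vary too quickly near $a_n$: by the Markov-type inequality (\ref{Markov}), $\|w_n'\|_E\le M_n\|w_n\|_E$, and more generally a Markov inequality holds on any disk whose intersection with $E$ controls the sup norm. First I would fix the radius $r_n=\frac1{M_n}\log\!\bigl(2-\frac1{C_n}\bigr)$ and let $\D$ be the closed disk of radius $r_n$ centered at $a_n$; note $a_n\in E$ gives $\D\cap E\ne\emptyset$ immediately.

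Next I would estimate $|w_n(z)|$ from below for $z\in\D$. Writing $z=a_n+t$ with $|t|\le r_n$, one expands $w_n$ along the segment from $a_n$ to $z$ and uses a derivative bound. Since $w_n$ has degree $n$ and all its zeros lie in $E$, on the disk of radius $r_n$ around $a_n$ one gets a Bernstein/Markov-type growth control: the cleanest route is to bound $\bigl|\log|w_n(z)|-\log|w_n(a_n)|\bigr|$ by integrating $|w_n'/w_n|$, but since $w_n$ may vanish in $\D$ this has to be done carefully — instead I would use the elementary inequality that for a monic polynomial of degree $n$ with zeros $\zeta_0^{(n-1)},\dots,\zeta_{n-1}^{(n-1)}$, and any $z$ with $|z-a_n|\le r_n$, each factor satisfies $|z-\zeta_j^{(n-1)}|\ge |a_n-\zeta_j^{(n-1)}|\,e^{-M_n r_n}$ is \emph{not} generally true; rather I would invoke that the logarithm of $\|w_n\|$ over such a disk and its value at $a_n$ differ by at most $n\cdot(\text{something}\cdot r_n)$. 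The robust version, and the one used in \cite{BCC}, is: $\log\|w_n\|_{\D}-\log|w_n(a_n)|$ is controlled because $\frac1n\log|w_n|$ is subharmonic and comparable to the Green-type potential, whose gradient is bounded by $M_n$ in the relevant region. Concretely, one shows $|w_n(z)|\ge e^{-n M_n r_n}\,|w_n(a_n)|\ge \bigl(2-\tfrac1{C_n}\bigr)^{-n}|w_n(a_n)|$ for $z\in\D$.

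Then I would combine this with the pseudo Leja inequality: for $z\in\D$,
\[
C_n|w_n(z)|\ \ge\ C_n\bigl(2-\tfrac1{C_n}\bigr)^{-n}|w_n(a_n)|\ \ge\ \bigl(2-\tfrac1{C_n}\bigr)^{-n}\,\|w_n\|_E,
\]
which is $\ge\|w_n\|_E$ as soon as $2-\frac1{C_n}\le 1$... which fails. So the correct bookkeeping must instead only claim $C_n|w_n(z)|\ge\|w_n\|_E$ with a \emph{different} pairing: one wants $e^{-nM_nr_n}\cdot C_n\ge 1$, i.e. $r_n\le\frac1{nM_n}\log C_n$ — but the statement has $\log(2-\frac1{C_n})$ and no factor $n$, so the exponent estimate must be \emph{linear} rather than of order $n$. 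This is the crux: one must use that $a_n$ is chosen as the maximizer among finitely many candidate points so that in fact $|w_n(a_n)|\ge \frac1{C_n}\|w_n\|_E$ \emph{and} the one-sided bound $|w_n(z)|\ge |w_n(a_n)|\,e^{-M_n|z-a_n|}$ holds because the logarithmic derivative of $w_n$ at points of a near-extremal configuration is bounded by $M_n$ pointwise near $a_n$ (the zeros closest to $a_n$ being what makes $|w_n|$ small there, and $a_n$ near-maximizing forces the configuration to be spread out). Granting $|w_n(z)|\ge |w_n(a_n)|e^{-M_n r_n}=|w_n(a_n)|(2-\frac1{C_n})$, we get $C_n|w_n(z)|\ge C_n(2-\frac1{C_n})|w_n(a_n)|\ge(2-\frac1{C_n})\|w_n\|_E\ge\|w_n\|_E$, as desired.

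The main obstacle, then, is precisely the derivative estimate $|w_n(z)|\ge |w_n(a_n)|e^{-M_n|z-a_n|}$ on $\D$: one has to justify that, although $w_n'/w_n$ has poles at the zeros of $w_n$, the integral of $|w_n'|/|w_n|$ along the radial segment from $a_n$ to $z$ is at most $M_n r_n$. The way this is handled in \cite{BCC} is to reduce to the real-variable statement along the segment and use that $|w_n(a_n)|\ge\frac1{C_n}\|w_n\|_E\ge\frac1{C_n}\|w_n\|_{\text{segment}}$, then apply a one-dimensional Markov inequality to $w_n$ restricted to that segment; since the segment has length $2r_n$ and $w_n$ there is a polynomial of degree $n$ bounded by $\|w_n\|_E\le C_n|w_n(a_n)|$, the growth of $|w_n|$ off $a_n$ is governed by $M_n$. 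I would import that lemma verbatim, noting that its proof nowhere uses the Markov property of $E$ — only the definition of $M_n(E)$ as the operator norm in (\ref{Mn}) and inequality (\ref{Markov}) — so the generalization from Markov sets to all infinite compact sets is immediate, the sole change being that $M_n$ need not grow polynomially.
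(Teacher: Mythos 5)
There is a genuine gap, and it starts with the choice of center. The disk in this proposition cannot be centered at $a_n$: first, the pseudo Leja inequality $C_n|w_n(a_n)|\ge\|w_n\|_E$ may hold with equality, so there is no room to lose anything when moving from $a_n$ to a nearby $z$ and still conclude $C_n|w_n(z)|\ge\|w_n\|_E$; second, the whole point of the proposition is to describe where a candidate for $a_n$ can be found \emph{before} $a_n$ is known, so a disk centered at $a_n$ would be useless for the covering procedure the paper describes. The argument of \cite[Theorem 4]{BCC} (and the one intended here, in the same spirit as the proof of Theorem \ref{thm:deltan}) centers $\D$ at a point $b_n\in E$ where $|w_n|$ attains its maximum, i.e.\ $|w_n(b_n)|=\|w_n\|_E$. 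Then for $z\in\D(b_n,r)$ Taylor's formula and the iterated Markov inequality $|w_n^{(l)}(b_n)|\le M_n^l\|w_n\|_E$ give the \emph{additive} bound
\[
|w_n(z)|\ \ge\ |w_n(b_n)|-\sum_{l=1}^n\frac{1}{l!}M_n^l r^l\,\|w_n\|_E\ =\ \|w_n\|_E\left(2-e^{M_n r}\right),
\]
and with $r=r_n=\frac1{M_n}\log\!\left(2-\frac1{C_n}\right)$ one gets $2-e^{M_n r_n}=\frac1{C_n}$, hence $C_n|w_n(z)|\ge\|w_n\|_E$ on all of $\D(b_n,r_n)$. (Here $C_n\ge 1$, which follows from $C_n|w_n(a_n)|\ge\|w_n\|_E\ge|w_n(a_n)|$, so $r_n\ge 0$.) Note that the pseudo Leja property of $(a_n)$ plays essentially no role in producing the disk; only the definition of $M_n$ does, which is exactly why the statement extends from Markov sets to arbitrary infinite compact sets.

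Your substitute for this step, the multiplicative estimate $|w_n(z)|\ge|w_n(a_n)|e^{-M_n|z-a_n|}$, is not a consequence of the Markov inequality and is not established anywhere in your argument; you yourself flag the attempted justifications as ``not generally true'' and fall back on a heuristic about near-extremal configurations. Moreover, even granting that estimate, the arithmetic does not close: $e^{-M_n r_n}=\left(2-\frac1{C_n}\right)^{-1}$, not $2-\frac1{C_n}$, so the chain $C_n|w_n(z)|\ge C_n e^{-M_n r_n}|w_n(a_n)|\ge \left(2-\frac1{C_n}\right)^{-1}\|w_n\|_E$ ends with a quantity that is at most $\|w_n\|_E$, not at least. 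Replacing the center by the maximum point of $|w_n|$ and the multiplicative bound by the Taylor--Markov additive bound repairs the proof completely.
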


This proposition says that we can take $r_n=\frac1{M_n}\log(2-\frac1{C_n})$ in the procedure described above.
It is worth noting that in view of \cite[Theorem 1]{E}, for any continuum  $E\subset \mathbb{C}$ we have $M_n(E)\le \frac{2^{1/n-1} \,n^2}{ \textrm{cap}\,E}$.\\

Let us now discuss the Lebesgue constants for pseudo Leja sequences.
A longstanding open question is to verify Condition (\ref{sublambda})  for the classical Leja sequences, see e.g. \cite[Open question (5.1)]{BBCL}.
Recently the  interest in this conjecture has been increasing, see e.g. \cite{CP1}, \cite{CP2}, \cite{TT}, \cite{A}.  We know that the answer is affirmative in the case of the unit disk, thanks to the work done in   \cite{CP1}, \cite{CP2}, \cite{C}, \cite{O} and \cite{Ir}. An interesting approach to obtaining an answer to this question in a general case is given by R. Taylor and V. Totik in  \cite{TT} and V. Andrievskii in \cite{A}.

In order to present the last of the results mentioned, we
need the following notion (cf. \cite{A}, \cite{LVbook}):
\begin{defin}
A bounded arc $L \subset \mathbb{C}$ is {\it quasiconformal}  if and only if for any two points
$z, \zeta \in L$ we have diam$ (L(z, \zeta )) \le  c|z-\zeta|$,
where $c \ge 1$ is a constant and $L(z, \zeta)$ is a subarc of $L$ between these points.
\end{defin}

\begin{thm} [\mbox{\cite[Theorem 1.2]{TT}}, cf. \cite{A}]\label{th:TT} If the outer boundary of a compact infinite set $E$ is a finite union of quasiconformal  arcs, then the Lebesgue constants for a classical Leja sequence are subexponential.
\end{thm}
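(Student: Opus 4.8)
The plan is to reduce Theorem~\ref{th:TT} to a bound on the Lebesgue function that follows, via Main Theorem~\ref{mthm:Lebesgue}, from a lower bound on the quantity $\min_k|\Delta^{(n)}(\zeta^{(n)})|^{1/n}$ for the nodes coming from a classical Leja sequence $(a_n)$. Concretely, fix the array whose $(n+1)$-tuple is $\zeta^{(n)}=\{a_0,\dots,a_n\}$, the first $n+1$ points of the Leja sequence. By Main Theorem~\ref{mthm:Lebesgue} it suffices to prove
\[
\liminf_{n\to\infty}\ \min_{k\in\{0,\dots,n\}}\left|\prod_{j\in\{0,\dots,n\}\setminus\{k\}}(a_j-a_k)\right|^{1/n}\ \ge\ {\rm cap}\,E ,
\]
the reverse inequality $\limsup\le{\rm cap}\,E$ being automatic since $\min_k|\Delta^{(k)}|\le V(\zeta^{(n)})^{2/(n+1)}$ and Fekete-type upper bounds give $V(\zeta^{(n)})^{2/(n(n+1))}\to{\rm cap}\,E$. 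So the whole problem is a uniform-in-$k$ lower estimate on the products $\prod_{j\ne k}|a_j-a_k|$.

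The first step is to observe that, because of the Leja recursion, the product over $j>k$ is essentially controlled from below by the definition of a classical Leja point: $|w_{n}(a_{n})|=\prod_{j<n}|a_{n}-a_j|\ge\|w_{n}\|_E/C_n$ with $C_n\equiv1$ here, and more generally telescoping the Leja inequalities shows $\prod_{0\le j<k\le m}|a_k-a_j|=\prod_{k=1}^m|w_k(a_k)|$ is comparable to $\prod_{k=1}^m\|w_k\|_E$, whose $1/m^2$-th root converges to ${\rm cap}\,E$ by the Kalm\'ar--Walsh theorem quoted in the Introduction. The delicate half is the ``backward'' product $\prod_{j>k}|a_j-a_k|$ for a fixed early index $k$: here one needs that later Leja points do not crowd too close to $a_k$. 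This is exactly where the geometric hypothesis enters. The second step, therefore, is to invoke the separation estimates for Leja points on sets whose outer boundary is a finite union of quasiconformal arcs: on such sets two Leja points $a_i,a_j$ with $i,j\le n$ satisfy $|a_i-a_j|\ge c_1\,\mathrm{diam}(E)\,e^{-c_2\sqrt{n}}$ (or a comparable subexponential lower bound), which is the content of the Taylor--Totik analysis and is ultimately a consequence of Markov's inequality (Proposition~\ref{prop:Totik}, or the explicit $M_n\le 2^{1/n-1}n^2/{\rm cap}\,E$ for continua) together with the quasiconformal distortion bound controlling the harmonic measure / Green's-function geometry near the boundary.

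The third step assembles these: write $\prod_{j\ne k}|a_j-a_k|=\big(\prod_{j<k}|a_k-a_j|\big)\cdot\big(\prod_{j>k,\ j\le n}|a_j-a_k|\big)$. For the forward factor use the telescoped Leja inequalities together with $\|w_k\|_E^{1/k}\to{\rm cap}\,E$; crucially one must run this argument starting the Leja sequence at $a_k$ viewed as the first node, using the nested structure of Leja sequences, so that the forward factor is bounded below by $({\rm cap}\,E)^{n-k}$ up to a subexponential error uniform in $k$. For the backward factor, bound each of the at most $n$ terms below by the uniform separation estimate $e^{-c_2\sqrt n}$, losing only a factor $e^{-c_2 n\sqrt n/n}=e^{-c_2\sqrt n}$ after taking $1/n$-th roots --- still subexponential. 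Combining, $\min_k|\Delta^{(k)}(\zeta^{(n)})|^{1/n}\ge{\rm cap}\,E\cdot e^{-o(1)}$, which is the desired liminf. Feeding this into Main Theorem~\ref{mthm:Lebesgue} yields $\Lambda_n(E,\zeta^{(n)})^{1/n}\to1$, i.e. subexponential Lebesgue constants.

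The main obstacle is the uniform-in-$k$ separation and forward-product estimate on finite unions of quasiconformal arcs: one needs that a Leja point is never much closer than $e^{-c\sqrt n}$ to any of the earlier ones \emph{and} that the accumulated product $\prod_{j<k}|a_k-a_j|$ does not degenerate below $({\rm cap}\,E)^{k}$ by more than a subexponential amount, both \emph{uniformly} over all $k\le n$. Establishing this requires the quantitative boundary geometry (quasiconformality $\Rightarrow$ polynomial-in-$n$ control of the modulus of continuity of $g_E$ and of harmonic measure), which is precisely the technical heart of \cite{TT} and \cite{A}; everything else is bookkeeping around Main Theorem~\ref{mthm:Lebesgue} and the Kalm\'ar--Walsh equivalence.
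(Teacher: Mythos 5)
Your overall route differs from the paper's. The paper does not try to verify condition (\ref{Deltamin}) and then invoke Main Theorem~\ref{mthm:Lebesgue}; it goes the other way. For the (generalized) statement, Theorem~\ref{cor:Leb_dla_Leja}, the paper reduces to a single quasiconformal arc and then checks Andrievskii's two-part criterion: weak* convergence of the counting measures to $\mu_E$ (known for (pseudo) Leja sequences from \cite{BCC}) together with the separation inequality $|a_j-a_n|\ge c\,\max\{\varrho_n(a_j),\varrho_n(a_n)\}$ relative to the Green level set $\{g_E=\log(1+1/n)\}$, which is Proposition~\ref{prop:sep}. The hard analytic work is then delegated to \cite{A}. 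The condition (\ref{Deltamin}) for Leja points is obtained only afterwards, as Corollary~\ref{cor:deltaconvergence}, i.e.\ as a \emph{consequence} of the subexponential Lebesgue constants via Theorem~\ref{tw:rownowaznosc} --- the opposite direction of the equivalence from the one you propose to use.

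Your route is in principle legitimate (either direction of Main Theorem~\ref{mthm:Lebesgue} may be used), but Step 3 has a genuine quantitative gap that the proposed tools cannot close. For a fixed early index $k$ the backward product $\prod_{j=k+1}^{n}|a_j-a_k|$ has up to $n$ factors, and you bound each factor from below by a uniform pairwise separation $\delta_n$. Whatever $\delta_n$ is --- your $e^{-c_2\sqrt n}$, the paper's $\varrho_n/(2e)$, or $1/(2C_nM_n)$ from Theorem~\ref{thm:deltan} --- it tends to $0$, so the resulting bound is $\delta_n^{\,n-k}$, whose $n$-th root is at worst $\delta_n\to 0$. This does not give $\liminf_n\min_k|\Delta^{(k)}|^{1/n}\ge{\rm cap}\,E$; it gives a lower bound degenerating to $0$, because the equivalence in Main Theorem~\ref{mthm:Lebesgue} requires the $n$-th root to converge to ${\rm cap}\,E$ \emph{exactly}, not merely up to a subexponential factor in the Lebesgue constant's sense. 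What is actually needed is that \emph{on average} the factors $|a_j-a_k|$ are of the right size, with only a controlled number of small ones --- a discrepancy/equidistribution statement, not a worst-case pairwise bound; this is precisely the content of the Taylor--Totik and Andrievskii analyses that you relegate to a citation. In addition, your treatment of the forward product is off: $\prod_{j<k}|a_k-a_j|=|w_k(a_k)|\ge\|w_k\|_E\ge({\rm cap}\,E)^k$ has $k$ factors (not $n-k$), so for small $k$ the entire burden falls on the backward product; and the suggestion to ``restart the Leja sequence at $a_k$'' is not available, since a tail of a Leja sequence is not a Leja sequence for the remaining points --- the recursion maximizes $\prod_{j<n}|z-a_j|$ over all earlier nodes simultaneously.
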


Recall that the outer boundary of a compact set $E$ is $\partial \widehat{E}=\partial D_\infty$. An inspection of the proofs in \cite{TT} and \cite{A} reveals that the above statement can be generalized to pseudo Leja sequences in $\partial \widehat{E}$ of bounded Edrei growth. In order to prove this, we show the following separation result:

\begin{prop} \label{prop:sep}
Let $E$ be a regular compact set in the complex plane and $(a_n)_{n=0}^\infty$ be a pseudo Leja sequence in $E$ of Edrei growth $(C_n)_{n=1}^\infty$. Then for every $j\in \{1,2,...,n-1\}$ we have the following estimate
\begin{equation*}  |a_j-a_n| \ge \frac1{2e\,C_n} \max\{\varrho_n(a_j), \varrho_n(a_n)\},\end{equation*}
where $\varrho_n(z):=\dist(z,\{w \, : \, g_E(w)=\log(1+1/n)\})$.
\end{prop}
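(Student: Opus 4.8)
The plan is to prove separately the two lower bounds $|a_j-a_n|\ge\frac1{2eC_n}\varrho_n(a_n)$ and $|a_j-a_n|\ge\frac1{2eC_n}\varrho_n(a_j)$; the claim is then their maximum. Fix $n$ and $j\in\{1,\dots,n-1\}$ and set $\ell_n:=\log(1+1/n)$, so that $E_{\ell_n}=\{g_E\le\ell_n\}$ is a compact set whose boundary, because the non-constant harmonic function $g_E$ has no flat level sets in $\cj\setminus\widehat{E}$, is exactly the level set $\{g_E=\ell_n\}$ occurring in the statement; hence $\varrho_n(z)=\dist(z,\partial E_{\ell_n})$ for $z\in E$. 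Since $g_E(a_k)=0<\ell_n$, every node $a_k$ lies in the interior of $E_{\ell_n}$, so $\varrho_n(a_k)>0$ and the closed disc $\overline D(a_k,\varrho_n(a_k)):=\{z\in\cj:|z-a_k|\le\varrho_n(a_k)\}$ is contained in $E_{\ell_n}$. Put $q(z):=\prod_{k\in\{0,\dots,n-1\}\setminus\{j\}}(z-a_k)$, a polynomial of degree $n-1$ with $w_n(z)=(z-a_j)\,q(z)$; by (\ref{pseudoL}) we have $w_n(a_n)\ne0$, so $a_0,\dots,a_n$ are pairwise distinct, $q(a_n)\ne0$, and $C_n\ge\|w_n\|_E/|w_n(a_n)|\ge1$. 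I shall also use the Bernstein--Walsh estimate $|w_n(z)|\le\|w_n\|_E\,e^{n\,g_E(z)}$ (immediate from $\log\Phi_E=g_E$ applied to $w_n/\|w_n\|_E\in\mathcal P_n^*$), which on $E_{\ell_n}$ gives $\|w_n\|_{E_{\ell_n}}\le\|w_n\|_E\,(1+1/n)^n<e\,\|w_n\|_E$.

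For the estimate with $\varrho_n(a_n)$, write $r:=\varrho_n(a_n)$. The circle $\{|z-a_n|=r\}$ lies in $E_{\ell_n}$, so the circular mean of $\log|w_n|$ over it is at most $\log\|w_n\|_{E_{\ell_n}}$; computing this mean factor by factor via the identity
\[
\frac1{2\pi}\int_0^{2\pi}\log\bigl|a_n+re^{i\theta}-a_k\bigr|\,d\theta=\log\max\bigl(r,|a_n-a_k|\bigr),
\]
which is $\ge\log r$ for $k=j$ and $\ge\log|a_n-a_k|$ for $k\ne j$, I obtain $\|w_n\|_{E_{\ell_n}}\ge r\,|q(a_n)|$. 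Combining this with Bernstein--Walsh and with (\ref{pseudoL}) in the form $\|w_n\|_E\le C_n|w_n(a_n)|=C_n|a_j-a_n|\,|q(a_n)|$, I get $r\,|q(a_n)|\le\|w_n\|_{E_{\ell_n}}\le e\,\|w_n\|_E\le eC_n|a_j-a_n|\,|q(a_n)|$, and cancelling $|q(a_n)|>0$ gives $\varrho_n(a_n)=r\le eC_n|a_j-a_n|$, which is stronger than needed.

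For the estimate with $\varrho_n(a_j)$, write $s:=\varrho_n(a_j)$; if $|a_j-a_n|>s$ there is nothing to prove (recall $C_n\ge1$). Otherwise $a_n\in\overline D(a_j,s)\subset E_{\ell_n}$, so the maximum modulus principle gives $|q(a_n)|\le\max_{|z-a_j|=s}|q(z)|$, and since $|z-a_j|=s$ on that circle,
\[
\|w_n\|_{E_{\ell_n}}\ge\max_{|z-a_j|=s}|w_n(z)|=s\max_{|z-a_j|=s}|q(z)|\ge s\,|q(a_n)|.
\]
Chaining with Bernstein--Walsh and (\ref{pseudoL}) as before yields $\varrho_n(a_j)=s\le eC_n|a_j-a_n|$, completing the proof. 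The part that needs genuine care is this geometric extraction of the factor $\varrho_n$ in the two displayed inequalities: the two target points are treated asymmetrically (for $\varrho_n(a_j)$ a factor of $w_n$ is literally constant on the relevant circle, whereas for $\varrho_n(a_n)$ one must average $\log|z-a_j|$ and invoke the Jensen-type identity above to recover $\log\varrho_n(a_n)$), and one has to be certain that $\varrho_n(b)$ really is the inradius of $E_{\ell_n}$ at $b$, which is precisely where regularity of $E$ and the absence of flat level sets of $g_E$ enter. Everything else is a routine combination of the Bernstein--Walsh inequality with the defining inequality (\ref{pseudoL}).
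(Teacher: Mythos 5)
Your proof is correct, but it follows a genuinely different route from the paper's. The paper's argument starts from $\|w_n\|_E \le C_n|w_n(a_n)| \le C_n\int_{a_j}^{a_n}|w_n'(z)|\,|dz|$ (using $w_n(a_j)=0$), bounds $|w_n'|$ by Cauchy's estimate on discs of radius $\tfrac12\varrho_n(a_j)$ contained in the sublevel set $\{g_E\le\log(1+1/n)\}$, and applies the Bernstein--Walsh bound there; the factor $|a_j-a_n|$ comes from the length of the path of integration, and the halving of the radius is needed so that discs centred at interior points of the segment still fit inside the sublevel set (the bound with $\varrho_n(a_n)$ is then obtained ``in the same manner''). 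You instead factor $w_n(z)=(z-a_j)q(z)$ and extract the radius directly: via the Jensen mean-value identity for circles centred at $a_n$, and via the constancy of $|z-a_j|$ on circles centred at $a_j$ combined with the maximum principle applied to $q$. Both arguments rest on the same two pillars --- the defining inequality (\ref{pseudoL}) and the Bernstein--Walsh estimate $\|w_n\|_{\{g_E\le\log(1+1/n)\}}\le e\,\|w_n\|_E$ --- but yours avoids differentiating $w_n$ and, since no halving of the radius is required, yields the slightly sharper constant $1/(e\,C_n)$ in place of $1/(2e\,C_n)$. The preliminary points you flag are all correctly handled: $C_n\ge 1$ and $q(a_n)\ne 0$ follow from (\ref{pseudoL}), and the closed disc of radius $\varrho_n(b)$ about a node $b$ lies in the sublevel set because the open disc is connected, misses the level set, and contains a point where $g_E=0$.
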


\begin{proof}
  We will first show that
  \begin{equation} \label{pom3} |a_j-a_n| \ge \frac1{2e\,C_n}\, \varrho_n(a_j).\end{equation}
  Put $r:= \frac12 \varrho_n(a_j)$. If $ |a_j-a_n| \ge r$ then the proof of (\ref{pom3}) is completed. In the opposite case, by the definition of pseudo Leja sequence, we have
 \begin{eqnarray*}
 \|w_n\|_E\le C_n |w_n(a_n)| &\le& C_n \!\!\int_{a_j}^{a_n} |w_n'(z)| \, |dz| + C_n |w_n(a_j)|\\& = & C_n \!\!\int_{a_j}^{a_n} |w_n'(z)| \, |dz|.
 \end{eqnarray*}
  Cauchy's formula leads us to $|w_n'(z)|\le \frac1r \|w_n\|_{\D(z,r)}$, where $\D(z,r)$ is the closed disk centered at $z$ with the radius $r$. Observe that \[ \D(z,r)\subset \D(a_j,2r)\subset\{g_E\le \log(1+1/n)\}\]
  and by the properties of the extremal function $\Phi_E=\exp g_E$ (see Definition \ref{defin:extremalfcn}), we have \[ \|w_n\|_{\D(z,r)} \le \|w_n\|_E \, \exp(n\, \|g_E\|_{\D(z,r)}) \le \|w_n\|_E \, (1+1/n)^n \le e\, \|w_n\|_E.\]
  Consequently,
  \[\|w_n\|_E \le \frac{C_n\, e}r \, |a_j-a_n| \, \|w_n\|_E = \frac{2e\,C_n}{\varrho_n(a_j)} \, |a_j-a_n| \, \|w_n\|_E\]
  and we obtain (\ref{pom3}). In the same manner we get the inequality
  \[ |a_j-a_n| \ge \frac1{2e\,C_n}\, \varrho_n(a_n)\]
  and this completes the proof.
\end{proof}

\begin{thm} \label{cor:Leb_dla_Leja} If the outer boundary of a compact set $E$ is a finite union of  quasiconformal arcs  and $(a_n)_{n=0}^\infty$ in $\partial \widehat{E}$ is a pseudo Leja sequence of bounded Edrei growth, then
\begin{equation}\label{Leb_dla_Leja}
  \lim\limits_{n\rightarrow\infty} \Lambda_n(E,a^{(n)})^{1/n} = 1,
\end{equation}
where $a^{(n)}=\{a_0,...,a_n\}$.
\end{thm}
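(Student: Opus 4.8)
The plan is to reduce the pseudo Leja case to the classical Leja case treated in Theorem~\ref{th:TT} and the analysis of \cite{TT}, \cite{A}, by showing that the only place where ``classical'' (i.e. $C_n=1$) is used is a lower bound on the mutual separation of the nodes relative to the distance to a nearby level set of $g_E$, and that Proposition~\ref{prop:sep} supplies exactly this bound with an extra factor $1/(2eC_n)$, which is harmless when $\sup_n C_n = C < \infty$. First I would recall the structure of the proof of subexponentiality of $\Lambda_n$ for a classical Leja sequence on a finite union of quasiconformal arcs: one estimates each fundamental Lagrange polynomial $L^{(j)}(z,a^{(n)})$ for $z\in E$ by comparing $|w_{n+1}(z)/(z-a_j)|$ with $|\Delta^{(j)}(a^{(n)})|$, and the key ingredient is that the Leja condition forces $|a_j-a_n|$ (hence the relevant factors in $\Delta^{(j)}$) to be not too small compared with the ``capacity scale'' $\varrho_n$, so that after summing over $j$ one gets a bound growing only polynomially (or subexponentially) in $n$. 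The point is that this argument is purely metric/geometric once the separation estimate is in hand.

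Next I would carry out the following steps. Step 1: invoke Proposition~\ref{prop:sep} to get, for the pseudo Leja sequence $(a_n)$ of bounded Edrei growth $C := \sup_n C_n$, the estimate $|a_j - a_n| \ge \frac{1}{2eC}\max\{\varrho_n(a_j),\varrho_n(a_n)\}$ for all $1\le j\le n-1$, where $\varrho_n(z) = \dist(z,\{g_E = \log(1+1/n)\})$; this is the exact analogue of the separation property used for classical Leja points in \cite{TT}, \cite{A}, with the universal constant merely rescaled by $C$. Step 2: observe that $\log(1+1/n)$ behaves like $1/n$, so the level set $\{g_E=\log(1+1/n)\}$ is comparable (in the quasiconformal-arc setting, via Green function estimates and the standard comparison of $g_E$ with distance to $E$) to the ``$1/n$-scale'' neighbourhood of $\partial\widehat E$ that appears in the cited proofs; here I would cite the same distortion/quasiconformal-arc machinery (Andrievskii-type estimates, e.g. as in \cite{A}, \cite{LVbook}) that underlies Theorem~\ref{th:TT}. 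Step 3: feed the separation estimate from Step~1 into that machinery verbatim, tracking the constant $C$ through the argument, to conclude $\Lambda_n(E,a^{(n)})\le$ (subexponential in $n$); combined with the trivial lower bound $\Lambda_n \ge 1$ this gives \eqref{Leb_dla_Leja}. Step 4: note that $(a_n)\subset\partial\widehat E$ is used only to place the nodes on the outer boundary where the quasiconformal-arc hypothesis applies, exactly as in the classical statement.

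The main obstacle I expect is Step~3: making precise that the proofs in \cite{TT} and \cite{A} are genuinely insensitive to replacing the sharp classical separation constant by the weaker pseudo Leja one, i.e. that no step secretly exploits $C_n=1$ beyond the separation inequality. This is essentially the ``inspection of the proofs'' alluded to in the text before Proposition~\ref{prop:sep}; I would address it by isolating, in the cited arguments, the single lemma that produces the node separation, replacing its proof by an appeal to Proposition~\ref{prop:sep}, and remarking that all subsequent estimates depend on the separation only through its \emph{form} (a fixed negative power of $n$ times a geometric quantity), so that an extra bounded multiplicative constant changes only the implied constants and not the subexponential conclusion. A secondary technical point, easily handled, is the passage between the level set $\{g_E=\log(1+1/n)\}$ used in Proposition~\ref{prop:sep} and whatever explicit level set or neighbourhood is used in \cite{TT}/\cite{A}; since $\log(1+1/n)\sim 1/n$ and all relevant sets are nested and regular (Proposition~\ref{prop:sublevel}), this is a routine comparison.
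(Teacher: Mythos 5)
Your overall strategy coincides with the paper's: reduce to a single quasiconformal arc, observe that the arguments of \cite{TT} and \cite{A} are insensitive to the Edrei constant except through a node-separation estimate, and supply that estimate from Proposition~\ref{prop:sep}, where bounded Edrei growth turns the factor $1/(2eC_n)$ into a uniform constant $1/(2eC)$. That is exactly how the paper uses Proposition~\ref{prop:sep}, and Steps 1, 2 and 4 of your plan are sound.

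There is, however, one genuine omission. Your premise that the separation inequality is ``the only place where classical (i.e.\ $C_n=1$) is used'' is not accurate. The criterion extracted from \cite{A} (as the paper states it) requires \emph{two} inputs: (1) the weak* convergence of the counting measures $\mu_n=\frac1n\sum_{i=0}^{n-1}\delta_{a_i}$ to the equilibrium measure $\mu_E$, and (2) the separation inequality $|a_j-a_n|\ge c\,\max\{\varrho_n(a_j),\varrho_n(a_n)\}$. Condition (1) is not part of the quasiconformal ``machinery'' that you propose to reuse verbatim; it is a property of the particular point sequence, proved for classical Leja sequences from the extremality of the Leja product, and it must be re-established for pseudo Leja sequences. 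It does hold — this is \cite[Theorem 1]{BCC}, valid for any pseudo Leja sequence — but your argument never invokes it, and separation alone cannot replace it: a uniformly separated set of nodes clustered on a small subarc would satisfy (2) while its Lebesgue constants grow exponentially. So Step 3 as written would fail without the equidistribution input. The fix is one sentence (cite \cite[Theorem 1]{BCC} for condition (1)), after which your proof matches the paper's.
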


\begin{proof}
Using the reasoning of \cite{TT} and \cite{A}, one can observe that it is sufficient to prove the above statement for the set $E$ which is a bounded quasiconformal arc. In this case consider two conditions:
\begin{enumerate}
  \item[(1)] the discrete measures  $\mu_n =\frac1n \sum_{i=0}^{n-1}\delta_{a_i}$ converge in the weak* topology to the equilibrium measure $\mu_E$,

  \item[(2)] a sequence of points $(a_n)$ in the arc $E$ satisfies the separation inequality
\[ |a_j-a_n| \ge c\, \max\{\varrho_n(a_j), \varrho_n(a_n)\}\]
with a positive constant $c$ independent of $j$ and $n$.
\end{enumerate}
\noindent V. Andrievskii has shown in \cite{A} that $(1)$ and $(2)$ together imply (\ref{Leb_dla_Leja}).

In our case the two above statements are true. Indeed, condition (1) holds for any pseudo Leja sequence, see \cite[Theorem 1]{BCC}, and condition  (2) is true by Proposition \ref{prop:sep} for any pseudo Leja sequence of bounded Edrei growth.
\end{proof}

The proof of the main result in \cite{A}  cannot be easily adapted
to the case of general pseudo Leja sequences without the assumption
that the Edrei growth is bounded. Different methods were used to
prove (in \cite{CP1} and \cite{Ir}) that (\ref{Leb_dla_Leja}) holds for pseudo Leja sequences
arising from Leja sequences on a circle via the Riemann map (by \cite[Theorem 6]{BCC}, such sequences have at most polynomial Edrei growth). But
the question concerning condition (\ref{Leb_dla_Leja}) for an arbitrary pseudo Leja
sequence (even on a smooth curve) is still open. It seems plausible
that for every compact regular set in the complex plane the Lebesgue
constants for any pseudo Leja sequence with bounded Edrei grow
subexponentially.

\begin{con} \label{conjLeb} If $E$ is a regular compact set in $\mathbb{C}$ then
condition $(\ref{Leb_dla_Leja})$ holds for any pseudo Leja sequence of bounded Edrei growth in $\partial \widehat E$.
\end{con}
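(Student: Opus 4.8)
The natural plan would be to reduce the assertion, via Main Theorem~\ref{mthm:Lebesgue}, to a statement about the products $\Delta^{(k)}(a^{(n)})$, and then to attack that statement with the separation estimate of Proposition~\ref{prop:sep} together with weak${}^{*}$ convergence of the empirical measures of $(a_n)$. The first move is to reduce to the case $E=\widehat{E}$: since $\|w\|_E=\|w\|_{\widehat{E}}$ for every polynomial $w$, the sequence $(a_n)$ is a pseudo Leja sequence in $\widehat{E}$ of the same Edrei growth, and since $\Lambda_n(E,a^{(n)})\le\Lambda_n(\widehat{E},a^{(n)})$ while always $\Lambda_n(E,a^{(n)})\ge1$, it suffices to prove $(\ref{Leb_dla_Leja})$ for the polynomially convex regular compact set $\widehat{E}$. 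The points $a_0,a_1,\dots$ lie in $\partial\widehat{E}$ and are pairwise distinct (if $a_n=a_j$ for some $j<n$ then $w_n(a_n)=0$, contradicting $(\ref{pseudoL})$ since $\|w_n\|_{\widehat{E}}>0$), so Main Theorem~\ref{mthm:Lebesgue} applies and $(\ref{Leb_dla_Leja})$ becomes equivalent to
\[
\lim_{n\to\infty}\ \min_{k\in\{0,\dots,n\}}|\Delta^{(k)}(a^{(n)})|^{1/n}={\rm cap}\,\widehat{E}.
\]
The inequality ``$\le$'' here is automatic: from $\prod_{k=0}^{n}|\Delta^{(k)}(a^{(n)})|=V(a^{(n)})^{2}$ one gets $\min_k|\Delta^{(k)}(a^{(n)})|\le V(a^{(n)})^{2/(n+1)}\le d_{n+1}(\widehat{E})^{n}$, where $d_{n+1}$ is the $(n+1)$-diameter, and $d_{n+1}(\widehat{E})\to{\rm cap}\,\widehat{E}$. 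So the whole content is the matching lower bound.

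For the lower bound, fix $k$ and write $\frac1n\log|\Delta^{(k)}(a^{(n)})|=\int\log|a_k-w|\,d\sigma_{n,k}(w)$, where $\sigma_{n,k}:=\frac1n\sum_{j\in\{0,\dots,n\}\setminus\{k\}}\delta_{a_j}$ is a probability measure. Because $\sigma_{n,k}=\frac{n+1}{n}\mu_{n+1}-\frac1n\delta_{a_k}$, with $\mu_{n+1}=\frac1{n+1}\sum_{j=0}^{n}\delta_{a_j}$, and because the empirical measures of a pseudo Leja sequence converge weak${}^{*}$ to $\mu_E=\mu_{\widehat{E}}$ (\cite[Theorem~1]{BCC}), we have $\sigma_{n,k}\to\mu_{\widehat{E}}$ weak${}^{*}$, uniformly in $k$. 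The target value is $\int\log|a_k-w|\,d\mu_{\widehat{E}}(w)=\log{\rm cap}\,\widehat{E}$, which holds because $g_{\widehat{E}}(a_k)=0$ for $a_k\in\partial\widehat{E}$. The only obstruction to passing to the limit is the logarithmic pole of $w\mapsto\log|a_k-w|$ at $w=a_k$: away from a small disk around $a_k$ the integrand is bounded and continuous, so weak${}^{*}$ convergence handles that part, and what remains is to show that the $a_j$ lying inside the disk cannot drag the integral below $\log{\rm cap}\,\widehat{E}$. This is precisely where Proposition~\ref{prop:sep} is meant to enter: since $C:=\sup_nC_n<\infty$, it supplies the uniform spacing $|a_j-a_k|\ge\frac1{2eC}\,\varrho_m(a_k)$ (with $m$ the larger of $j$ and $k$), and combining it with the weak${}^{*}$ bound on the mass that $\sigma_{n,k}$ places near $a_k$ gives exactly Andrievskii's two inputs, conditions~(1) and~(2) from the proof of Theorem~\ref{cor:Leb_dla_Leja}, out of which $(\ref{Leb_dla_Leja})$ is derived in the quasiconformal-arc case.

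The hard part, and the reason this remains a conjecture, is making that derivation go through for an \emph{arbitrary} regular compact set. Andrievskii's implication (see \cite{A}) that conditions~(1) and~(2) together give $(\ref{Leb_dla_Leja})$, used in the proof of Theorem~\ref{cor:Leb_dla_Leja}, together with the reduction to the case of a single quasiconformal arc used there (cf. \cite{TT}, \cite{A}), rest on quasiconformal-mapping estimates available only when the outer boundary of $E$ is a finite union of quasiconformal arcs; there $\varrho_m$ is, uniformly, of order comparable to a harmonic-measure or capacity scale, so the near-diagonal contribution is genuinely controlled. For a general regular $E$ the outer boundary may be totally disconnected (a Cantor set of positive capacity) or a complicated curve (a polynomial Julia set), there is no such reduction, and $g_E$ need only be continuous; then $\varrho_m(a_k)$ can decay far faster than any power of $1/m$, so the elementary estimate obtained by bounding each factor $|a_k-a_j|$ of $\Delta^{(k)}(a^{(n)})$ below by its separation is far too weak: already when $g_E$ is H\"older with exponent $\alpha$ and $k$ is fixed it yields only $|\Delta^{(k)}(a^{(n)})|^{1/n}\gtrsim n^{-1/\alpha}$, which does not tend to $1$. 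Thus one really needs a scale-uniform, quasi-uniform-distribution statement for $(a_n)$ valid for arbitrary regular $E$, or a route through Main Theorem~\ref{mthm:Lebesgue} that sidesteps it; this is the crux. A reasonable strategy would be to settle $(\ref{Leb_dla_Leja})$ first for finite unions of quasiconformal Jordan curves and arcs, then for Cantor sets of positive capacity, then for polynomial Julia sets, and to look for a unifying mechanism.
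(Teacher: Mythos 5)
This statement is Conjecture~\ref{conjLeb}: the paper offers no proof of it, only the surrounding discussion explaining that Theorem~\ref{cor:Leb_dla_Leja} settles the quasiconformal-arc case and that the general case is open. You have correctly recognized this and have not claimed a proof, so there is nothing to fault on that score; what you have written is a programme plus an honest account of where it breaks, and it is consistent with the paper's own framing.

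Your preparatory reductions are sound and match the machinery the paper would use: the passage to $\widehat{E}$, the distinctness of the $a_j$, the equivalence of $(\ref{Leb_dla_Leja})$ with $\lim_n\min_k|\Delta^{(k)}(a^{(n)})|^{1/n}={\rm cap}\,\widehat{E}$ via Main Theorem~\ref{mthm:Lebesgue} (Theorem~\ref{tw:rownowaznosc}), the easy upper bound through $V(a^{(n)})^2=\prod_k|\Delta^{(k)}(a^{(n)})|$, and the identification of the two inputs --- weak${}^*$ equidistribution from \cite[Theorem~1]{BCC} and the separation estimate of Proposition~\ref{prop:sep}, the latter genuinely requiring $\sup_nC_n<\infty$. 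You also correctly locate the crux: Andrievskii's implication ``(1) and (2) imply $(\ref{Leb_dla_Leja})$'' and the reduction to a single arc are where the quasiconformal hypothesis is consumed, and the naive route of bounding every factor of $\Delta^{(k)}(a^{(n)})$ by the worst-case separation $\varrho_n(a_k)\gtrsim n^{-1/\alpha}$ only yields $|\Delta^{(k)}(a^{(n)})|^{1/n}\gtrsim n^{-1/\alpha}\to0$, which is useless. One small caution: your claim that $\sigma_{n,k}\to\mu_{\widehat{E}}$ ``uniformly in $k$'' and that the near-diagonal mass is controlled by weak${}^*$ convergence alone would, even in the quasiconformal case, need the quantitative discrepancy estimates of \cite{TT} and \cite{A} rather than bare weak${}^*$ convergence; this is precisely the scale-uniform equidistribution you correctly flag as missing in general. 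In short: no gap to report, because the statement remains a conjecture and you have treated it as such.
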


 Finally, let us propose another nice separation property of pseudo Leja sequences, which we will not use here but which we find interesting for its own reason.
Given a pseudo Leja sequence $(a_n)_{n=0}^\infty$ in $E$, we will consider
$$\sigma_n :=\min_{j\in\{0,...,n-1\}}|a_n-a_j|, \quad n\in\nj.$$ The following result says that $(\sigma_n)$ for a pseudo Leja sequence $(a_n)$ is of subexponential growth.

\begin{thm}\label{thm:deltan} If $E$ is a regular compact set in $\mathbb{C}$ and $(a_n)$ is a pseudo Leja sequence in E  then
 $$
 \lim_{n\rightarrow \infty} \sqrt[n]{\sigma_n}=1.
 $$
 Moreover,
 \begin{equation}\label{separacja}
   \sigma_n \ge \frac{\log(1+1/C_n)}{M_n} \ge \frac1{2\,C_n\, M_n}.
 \end{equation}
\end{thm}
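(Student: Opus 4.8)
The plan is to reduce the limit assertion to the explicit estimate (\ref{separacja}) and prove the latter by hand. For the reduction, note first that $\sigma_n$ is the distance between two points of the compact set $E$, so $\sigma_n\le{\rm diam}\,E$ and therefore $\limsup_{n}\sqrt[n]{\sigma_n}\le 1$. Granting (\ref{separacja}) we have $\sqrt[n]{\sigma_n}\ge(2C_nM_n)^{-1/n}$; since $\sqrt[n]{C_n}\to 1$ by (\ref{Edrei}) and $\sqrt[n]{M_n}\to 1$ by Proposition \ref{prop:Totik} (this is where regularity of $E$ enters), we get $\liminf_{n}\sqrt[n]{\sigma_n}\ge 1$, whence $\sqrt[n]{\sigma_n}\to 1$. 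The second inequality in (\ref{separacja}) is just the elementary fact that $\log(1+x)\ge x/2$ for $x\in(0,1]$ (the function $\log(1+x)-x/2$ vanishes at $0$ and has nonnegative derivative on $[0,1]$), applied with $x=1/C_n$; here $C_n\ge 1$ because $a_n\in E$ forces $|w_n(a_n)|\le\|w_n\|_E\le C_n|w_n(a_n)|$. So the core of the proof is the first inequality in (\ref{separacja}), namely $\sigma_n\ge M_n^{-1}\log(1+1/C_n)$.

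To establish it, fix $n$ and pick $j_0\in\{0,\dots,n-1\}$ with $|a_n-a_{j_0}|=\sigma_n$, and factor $w_n(z)=(z-a_{j_0})\,g(z)$, where $g(z)=\prod_{k<n,\,k\ne j_0}(z-a_k)$ has degree $n-1$. Being a polynomial, $g$ coincides with its Taylor expansion at the point $a_{j_0}$, which lies in $E$:
\[
g(a_n)=\sum_{m=0}^{n-1}\frac{g^{(m)}(a_{j_0})}{m!}\,(a_n-a_{j_0})^{m}.
\]
Differentiating $w_n=(z-a_{j_0})g$ by the Leibniz rule gives $w_n^{(m+1)}(a_{j_0})=(m+1)\,g^{(m)}(a_{j_0})$, and iterating the Markov inequality (\ref{Markov}) — legitimate since $\deg w_n^{(k)}\le n$ for all $k$ and $M_{n-k}\le M_n$ — yields $|w_n^{(m+1)}(a_{j_0})|\le\|w_n^{(m+1)}\|_E\le M_n^{m+1}\|w_n\|_E$. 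Substituting and summing,
\[
|g(a_n)|\le M_n\|w_n\|_E\sum_{m=0}^{n-1}\frac{(M_n\sigma_n)^m}{(m+1)!}\le M_n\|w_n\|_E\cdot\frac{e^{M_n\sigma_n}-1}{M_n\sigma_n}=\frac{e^{M_n\sigma_n}-1}{\sigma_n}\,\|w_n\|_E .
\]
On the other hand $|w_n(a_n)|=\sigma_n\,|g(a_n)|$, so (\ref{pseudoL}) gives $\|w_n\|_E\le C_n\sigma_n|g(a_n)|\le C_n\bigl(e^{M_n\sigma_n}-1\bigr)\|w_n\|_E$. Since $E$ is regular, hence infinite, $\|w_n\|_E>0$ (and, in particular, the $a_k$ are forced to be distinct and $\sigma_n>0$), so we may cancel it and obtain $e^{M_n\sigma_n}\ge 1+1/C_n$, i.e. $\sigma_n\ge M_n^{-1}\log(1+1/C_n)$.

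The step I expect to demand the most care is the bound on $|g(a_n)|$. The naive route — writing $w_n(a_n)=\int_{a_{j_0}}^{a_n}w_n'(z)\,dz$ and estimating $w_n'$ by the Markov inequality — does not work, because the segment $[a_{j_0},a_n]$ need not lie in $E$ and, without a modulus of continuity for $g_E$ near $\partial E$, there is no way to transfer the bound $\|w_n'\|_E\le M_n\|w_n\|_E$ to a neighbourhood of $E$. Expanding $g$ about the point $a_{j_0}\in E$ and using the identity $w_n^{(m+1)}(a_{j_0})=(m+1)g^{(m)}(a_{j_0})$ replaces the line integral by a convergent power series all of whose coefficients are controlled on $E$ alone, and it is this device that produces the clean constant $\log(1+1/C_n)$. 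Everything else is bookkeeping together with the two soft inputs already available in the paper: Totik's theorem (Proposition \ref{prop:Totik}) and the Edrei condition (\ref{Edrei}).
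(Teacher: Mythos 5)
Your proof is correct and follows essentially the same route as the paper: Taylor's formula at a previous node combined with the iterated Markov inequality (\ref{Markov}) to get $\|w_n\|_E/C_n\le(e^{M_n\sigma_n}-1)\|w_n\|_E$, then the elementary bounds on $\log(1+x)$ together with (\ref{Edrei}) and Proposition \ref{prop:Totik}. Your factorization $w_n=(z-a_{j_0})g$ is a cosmetic detour — the paper simply expands $w_n$ itself at $a_j$ and drops the vanishing $l=0$ term, which yields the identical sum $\sum_{l\ge1}|w_n^{(l)}(a_j)|\,\sigma_n^l/l!$.
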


\begin{proof}
Let $(a_n)$ be a pseudo Leja sequence in $E$ with Edrei growth $(C_n)$. Fix a positive integer  $n$ and $j\in \{0,...,n-1\}$. As in the proof of \cite[Theorem 3]{BCC}, it follows from inequality (\ref{pseudoL})  and Taylor's formula that
\[ \frac{\|w_n\|_E}{C_n} \le |w_n(a_n)| \le \sum_{l=0}^n \frac1{l!} |w_n^{(l)}(a_j)| \, |a_j-a_n|^l = \sum_{l=1}^n \frac1{l!} |w_n^{(l)}(a_j)| \, |a_j-a_n|^l.\]
By iterating inequality (\ref{Markov}), we obtain estimates for all derivatives of $w_n$ and thus
\[ \frac{\|w_n\|_E}{C_n} \le \sum_{l=1}^n \frac1{l!} M_n^l \|w_n\|_E \, |a_j-a_n|^l \le \left(e^{M_n|a_j-a_n|}-1\right) \|w_n\|_E.\]
By the definition of $\sigma_n$, we obtain
\begin{equation}\label{pom1}
  \sigma_n \ge \frac{\log(1+1/C_n)}{M_n}.
\end{equation}
Of course, $\sigma_n$ is estimated from above by the diameter of the set $E$ and therefore, it is sufficient to prove that the $n$-th root of the right hand side of this inequality tends to 1. To this end, we will use the following well known estimates
\[ \frac{x}{1+x} \le \log (1+x) \le x \qquad \ \textrm{for} \quad x\ge 0\]
which imply that
\begin{equation}\label{pom2}
  \left[\frac1{(1+C_n)M_n}\right]^{1/n} \le \left[\frac{\log(1+1/C_n)}{M_n}\right]^{1/n} \le \left[\frac1{C_n\, M_n}\right]^{1/n}.
\end{equation}
The lower and the upper bound in this formula tend to 1 in view of (\ref{Edrei}) and Proposition \ref{prop:Totik}.

Inequalities (\ref{separacja}) are obvious consequences of (\ref{pom1}) and (\ref{pom2}).
\end{proof}

\subsection{Some general convergence results}

We will now consider  an array $A$ of distinct points of a regular compact set $E$. Recall Notations \ref{nota:lagrange} and Condition (\ref{sublambda}). The following theorem is a reformulation of our Main Theorem \ref{mthm:Lebesgue}.

\begin{thm}\label{tw:rownowaznosc}
Let $E$ be a polynomially convex  regular  compact set in $\cj$ and $A=\{\zeta^{(n)}: n\geq 1\}$ an array  of distinct points  in $E$. Then $(\ref{sublambda})$ is equivalent to
\begin{eqnarray}
\label{Deltamin}
\lim\limits_{n\rightarrow\infty} \min\limits_{k\in\{0,...,n\}} |\Delta^{(k)}(\zeta^{(n)})|^{1/n} &= &{\rm cap}\,E.\end{eqnarray}
\end{thm}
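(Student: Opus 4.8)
The plan is to prove the two implications of the equivalence separately, using throughout the elementary identity
\[\Lambda_n(z,\zeta^{(n)})=\sum_{j=0}^n|L^{(j)}(z,\zeta^{(n)})|=\sum_{j=0}^n\frac{|N_n^{(j)}(z)|}{|\Delta^{(j)}(\zeta^{(n)})|},\qquad z\in\cj,\]
where $N_n^{(j)}(z):=\prod_{k\in\{0,\dots,n\}\setminus\{j\}}(z-\zeta_k^{(n)})$ is monic of degree $n$ with all its zeros in $E$ (the points $\zeta_k^{(n)}$ being distinct by hypothesis), so that $L^{(j)}(\cdot,\zeta^{(n)})=N_n^{(j)}/\Delta^{(j)}(\zeta^{(n)})$. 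Two soft facts will be used in both directions. First, $\prod_{k=0}^n|\Delta^{(k)}(\zeta^{(n)})|=V(\zeta^{(n)})^2$, so $\min_k|\Delta^{(k)}(\zeta^{(n)})|$ is at most the geometric mean $V(\zeta^{(n)})^{2/(n+1)}$, whence $\min_k|\Delta^{(k)}(\zeta^{(n)})|^{1/n}\le V(\zeta^{(n)})^{2/(n(n+1))}\le d_{n+1}(E)$, where $d_{n+1}(E)$ is the maximum of $V(\cdot)^{2/(n(n+1))}$ over $(n+1)$-tuples in $E$; since $d_{n+1}(E)$ decreases to the transfinite diameter ${\rm cap}\,E$, we get $\limsup_n\min_k|\Delta^{(k)}(\zeta^{(n)})|^{1/n}\le{\rm cap}\,E$ with no hypothesis at all. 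Second, every monic polynomial $q$ of degree $m$ satisfies $\|q\|_E\ge({\rm cap}\,E)^m$ (the Chebyshev bound). Hence in both directions only one matching one-sided estimate remains.

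For $(\ref{sublambda})\Rightarrow(\ref{Deltamin})$ I would argue directly. Fix $n$ and let $k_0$ realize $\min_k|\Delta^{(k)}(\zeta^{(n)})|$. From $\Lambda_n(\cdot,\zeta^{(n)})\ge|L^{(k_0)}(\cdot,\zeta^{(n)})|$ on $\cj$ we get $\|L^{(k_0)}(\cdot,\zeta^{(n)})\|_E\le\Lambda_n(E,\zeta^{(n)})$, while the Chebyshev bound gives $\|N_n^{(k_0)}\|_E\ge({\rm cap}\,E)^n$; combining, $|\Delta^{(k_0)}(\zeta^{(n)})|=\|N_n^{(k_0)}\|_E/\|L^{(k_0)}(\cdot,\zeta^{(n)})\|_E\ge({\rm cap}\,E)^n/\Lambda_n(E,\zeta^{(n)})$. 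Taking $n$-th roots and using $(\ref{sublambda})$ yields $\liminf_n\min_k|\Delta^{(k)}(\zeta^{(n)})|^{1/n}\ge{\rm cap}\,E$, which together with the unconditional $\limsup$ bound is exactly $(\ref{Deltamin})$.

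For $(\ref{Deltamin})\Rightarrow(\ref{sublambda})$, write $m_n:=\min_k|\Delta^{(k)}(\zeta^{(n)})|$, so $m_n^{1/n}\to{\rm cap}\,E$. From the identity above and $|\Delta^{(j)}(\zeta^{(n)})|\ge m_n$ we obtain $\Lambda_n(E,\zeta^{(n)})\le m_n^{-1}\sum_{j=0}^n\|N_n^{(j)}\|_E\le(n+1)\,m_n^{-1}\max_j\|N_n^{(j)}\|_E$; since $\Lambda_n(E,\zeta^{(n)})\ge1$ always (because $\sum_jL^{(j)}(\cdot,\zeta^{(n)})\equiv1$ forces $\Lambda_n\ge1$), it is enough to prove $\limsup_n\max_j\|N_n^{(j)}\|_E^{1/n}\le{\rm cap}\,E$. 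To this end I would first deduce, by squeezing $m_n^{1/n}\le V(\zeta^{(n)})^{2/(n(n+1))}\le d_{n+1}(E)$ with the first soft fact, that $V(\zeta^{(n)})^{2/(n(n+1))}\to{\rm cap}\,E$; a standard energy/lower-semicontinuity argument (any weak-$\ast$ limit $\nu$ of the empirical measures $\mu_n:=\tfrac1{n+1}\sum_{k=0}^n\delta_{\zeta_k^{(n)}}$ has logarithmic energy at most $\lim_n\bigl(-\tfrac2{n(n+1)}\log V(\zeta^{(n)})\bigr)=-\log{\rm cap}\,E$, hence equals the equilibrium measure $\mu_E$) then forces $\mu_n\to\mu_E$ weak-$\ast$. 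Now pick, for each $n$, an index $j_n$ with $\|N_n^{(j_n)}\|_E=\max_j\|N_n^{(j)}\|_E$ and a point $z_n\in E$ at which this maximum is attained; then $\tfrac1n\log\|N_n^{(j_n)}\|_E=\int_E\log|z_n-w|\,d\nu_n(w)$, where $\nu_n:=\tfrac1n\sum_{k\ne j_n}\delta_{\zeta_k^{(n)}}=\tfrac{n+1}{n}\mu_n-\tfrac1n\delta_{\zeta_{j_n}^{(n)}}\to\mu_E$ weak-$\ast$. Passing to a subsequence along which $z_n\to z^*\in E$ and invoking the joint upper semicontinuity of $(z,\nu)\mapsto\int_E\log|z-w|\,d\nu(w)$ (which holds because the kernel $(z,w)\mapsto\log|z-w|$ is bounded above and upper semicontinuous on the compact set $E\times E$, hence a decreasing limit of continuous functions, e.g. $\max(\log|z-w|,-M)$), we get $\limsup_n\tfrac1n\log\|N_n^{(j_n)}\|_E\le\int_E\log|z^*-w|\,d\mu_E(w)=\log{\rm cap}\,E$, the last equality being the value of the equilibrium potential on $E$ for a regular polynomially convex $E$. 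A routine subsequence argument upgrades this to $\limsup_n\max_j\|N_n^{(j)}\|_E^{1/n}\le{\rm cap}\,E$; feeding it back into the displayed inequality for $\Lambda_n$, together with $m_n^{1/n}\to{\rm cap}\,E>0$ and $(n+1)^{1/n}\to1$, gives $\limsup_n\Lambda_n(E,\zeta^{(n)})^{1/n}\le1$, i.e.\ $(\ref{sublambda})$.

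I expect the main obstacle to be precisely the upper bound on $\max_j\|N_n^{(j)}\|_E$ in the second implication. The hypothesis $m_n^{1/n}\to{\rm cap}\,E$ carries no rate of convergence, so it does \emph{not} force the individual quantities $|\Delta^{(j)}(\zeta^{(n)})|^{1/n}$, nor the $\|N_n^{(j)}\|_E^{1/n}$, to tend to ${\rm cap}\,E$; any purely algebraic comparison of $V$-type quantities (for instance bounding $\|N_n^{(j)}\|_E/|\Delta^{(j)}(\zeta^{(n)})|$ by a ratio of Vandermonde products) only produces error factors of the form $(1+o(1))^{n^2}$, which are useless after extracting $n$-th roots. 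This is why the argument seems to require passing through the weak-$\ast$ convergence $\nu_n\to\mu_E$ and the upper semicontinuity of the logarithmic potential rather than through a root-order estimate; making this work uniformly in $j$, via the choice of $j_n$ and of the maximizing point $z_n$, is the delicate step, while everything else in the proof is soft.
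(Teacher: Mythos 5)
Your proposal is correct, and while the first implication follows essentially the paper's route, the second is handled by a genuinely different argument. For $(\ref{sublambda})\Rightarrow(\ref{Deltamin})$ both proofs extract the lower bound $\min_k|\Delta^{(k)}(\zeta^{(n)})|\ge ({\rm cap}\,E)^n/\Lambda_n(E)$ (up to a harmless ${\rm diam}\,E$ factor in the paper) from one fundamental Lagrange polynomial plus the Chebyshev bound; but for the matching upper bound the paper uses Markov's inequality $|\Delta^{(k)}|=|w_{n+1}'(\zeta_k^{(n)})|\le M_{n+1}\|w_{n+1}\|_E$ together with Totik's theorem and the Kalm\'ar--Walsh circle of results from \cite{BBCL}, whereas you get $\limsup_n\min_k|\Delta^{(k)}(\zeta^{(n)})|^{1/n}\le{\rm cap}\,E$ \emph{unconditionally} from $\prod_k|\Delta^{(k)}|=V^2$ and the monotone convergence of the $n$-th diameters --- cleaner and hypothesis-free. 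For $(\ref{Deltamin})\Rightarrow(\ref{sublambda})$ the paper keeps the degree-$(n+1)$ polynomial $w_{n+1}$ intact and tames the factors $|z-\zeta_k^{(n)}|^{-1}$ by passing to the dilated set $E^{1/M_{n+1}}$ (maximum principle, a Ple\'sniak-type Taylor estimate $\|p\|_{E^{1/M_{n+1}}}\le e\|p\|_E$, and Proposition \ref{prop:Totik}), then invokes \cite{BBCL} to convert $V(\zeta^{(n)})^{2/(n(n+1))}\to{\rm cap}\,E$ into $\|w_{n+1}\|_E^{1/n}\to{\rm cap}\,E$. You instead work with the monic degree-$n$ factors $N_n^{(j)}$ directly, which removes the dilation/Markov machinery entirely, at the price of having to prove $\limsup_n\max_j\|N_n^{(j)}\|_E^{1/n}\le{\rm cap}\,E$ yourself; your derivation of this via weak-$\ast$ equidistribution of the nodes and upper semicontinuity of the logarithmic potential (using regularity of $E$ to evaluate the equilibrium potential on $E$) is sound, and you correctly identified it as the one step where a root-order algebraic estimate cannot work. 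The trade-off: the paper's proof is shorter because it outsources the potential theory to \cite{BBCL}, while yours is self-contained and in passing reproves a variant of G\'orski's result on $\max_k|\Delta^{(k)}|^{1/n}$ that the paper records separately after the theorem.
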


In what follows, we will write
$$\Delta_k^{(n)}:=\Delta^{(k)}(\zeta^{(n)}), \quad k\in\{0,1,...,n\},$$
for simplicity.

\begin{proof}

\noindent
 $(\ref{sublambda})\Longrightarrow (\ref{Deltamin})$.

Fix $j\in \{0,...,n\}$ such that $|\Delta_j^{(n)}|=\min_k|\Delta_k^{(n)}|$.  Observe that
\begin{eqnarray*} \Lambda_n(E) = \max_{z\in E} \sum_{l=0}^n \frac{|w_{n+1}(z)|}{|z-\zeta_l^{(n)}|\, |\Delta_l^{(n)}|} &\ge& \frac1{{\rm diam}\, E} \max_{z\in E} \frac{|w_{n+1}(z)|}{ |\Delta_j^{(n)}|}\\& = &
\frac1{{\rm diam}\, E}  \frac{\|w_{n+1}\|_E}{|\Delta_j^{(n)}|}. \end{eqnarray*}
Since cap$\,E$ is equal to the Chebyshev constant, which is not greater than $\|p\|_E^{1/n}$ for all monic polynomials $p$ of degree $n$, we have
\[  |\Delta_j^{(n)}| \ge \frac{({\rm cap}\,E)^n}{\Lambda_n(E)\,{\rm diam}\, E}.\]
Hence \[ \frac{{\rm cap}\,E}{(\Lambda_n(E))^{1/n}\,({\rm diam}\, E)^{1/n}} \le \min\limits_{k\in\{0,...,n\}} |\Delta_k^{(n)}|^{1/n}.\]
By the assumption, the left hand side of this inequality tends to cap$\,E$ as $n \to \infty$.

On the other hand,  it follows from (\ref{Markov}) that
\[ |\Delta_k^{(n)}| = |w_{n+1}'(\zeta_k^{(n)})| \le M_{n+1}\|w_{n+1}\|_E \ \ \ \textrm{for any} \ k\in \{0,...,n\}.\]
  In view of     \cite[Theorem 1.4 and Theorem 1.5]{BBCL},  the statement   $(\ref{sublambda})$  implies that $\lim_{n \to \infty} ||w_n||_E^{1/n}={\rm cap }\,E$. This fact and Proposition \ref{prop:Totik} yield
\begin{equation*}
   \min\limits_{k\in\{0,...,n\}} |\Delta_k^{(n)}|^{1/n} \le M_{n+1}^{1/n}\|w_{n+1}\|_E^{1/n} \longrightarrow \textrm{cap}\, E \ \ \ {\rm as} \ n\rightarrow \infty
\end{equation*}
and this finishes the proof of the first implication.\\

\noindent $(\ref{Deltamin})\Longrightarrow (\ref{sublambda})$

Observe first that
\begin{eqnarray*}\Lambda_n(E) = \max_{z\in E} \sum_{k=0}^n \frac{|w_{n+1}(z)|}{|z-\zeta_k^{(n)}|\, |\Delta_k^{(n)}|} &\le& \max_{z\in E^{1/M_{n+1}}} \sum_{k=0}^n \frac{|w_{n+1}(z)|}{|z-\zeta_k^{(n)}|\, |\Delta_k^{(n)}|}\\ & = &\max_{z\in \partial\,\left(E^{1/M_{n+1}}\right)} \sum_{k=0}^n \frac{|w_{n+1}(z)|}{|z-\zeta_k^{(n)}|\, |\Delta_k^{(n)}|} ,\end{eqnarray*}
where $E^\varepsilon$ is the dilation defined in Notation \ref{nota:zbiory}.  The second equality follows from the maximum principle for subharmonic functions. If $z\in \partial (E^{1/M_{n+1}})$, we can estimate $|z-\zeta_k^{(n)}|$ from below and get
\[ \Lambda_n(E) \le \max_{z\in \partial\,\left(E^{1/M_{n+1}}\right)} \sum_{k=0}^n M_{n+1}\, \frac{|w_{n+1}(z)|}{|\Delta_k^{(n)}|} .\]
Now a reasoning (using Taylor's formula) analogous to that from the proof of  \cite[Theorem 3.3]{Pl} yields
\[ \|p\|_{E^{1/M_{n+1}}} \le e\, \|p\|_E\]
for any polynomial $p$ of degree at most $n+1$. Consequently, we have
\begin{eqnarray*} \Lambda_n(E) \le M_{n+1}\, \sum_{k=0}^n \frac{\|w_{n+1}\|_{E^{1/M_{n+1}}}}{|\Delta_k^{(n)}|} &\le& M_{n+1}\, \sum_{k=0}^n \frac{e \, \|w_{n+1}\|_{E}}{|\Delta_k^{(n)}|}\\ & \le& M_{n+1}\, \frac{e \, (n+1)\, \|w_{n+1}\|_{E}}{\min\limits_{k\in\{0,...,n\}} |\Delta_k^{(n)}|}.\end{eqnarray*}

Next, fix again a $j\in \{0,...,n\}$ such that $|\Delta_j^{(n)}|=\min_k|\Delta_k^{(n)}|$ and observe that
\begin{eqnarray*}\left ( |\Delta_j^{(n)}|\right )^{n+1} \leq |\Delta_0^{(n)}| \cdot |\Delta_1^{(n)}|\cdot .... \cdot |\Delta_n^{(n)}| =V^2(\zeta^{(n)}) \leq V^2(\eta^{(n)}),
\end{eqnarray*}
where $\eta^{(n)}$ are the  Fekete points. Classical results of M. Fekete and G. Szeg\H o say that $ V(\eta^{(n)})^{\frac{2}{n(n+1)}} \longrightarrow {\rm cap} E$ as $n \to \infty$, hence by $(\ref{Deltamin})$ also $V(\zeta^{(n)})^{\frac{2}{n(n+1)}} \longrightarrow {\rm cap} E$ as $n \to \infty$.   By  \cite[Theorem 1.4 and Theorem 1.5]{BBCL}  this implies that $\lim_{n \to \infty} ||w_n||_E^{1/n}={\rm cap }\,E$.
Using  this fact again  we get
\[
\left(M_{n+1}\, \frac{e \, (n+1)\, \|w_{n+1}\|_{E}}{\min\limits_{k\in\{0,...,n\}} |\Delta_k^{(n)}|}\right)^{1/n}\longrightarrow 1, \quad n\rightarrow \infty.
\]
An obvious estimate $\Lambda_n(E)\ge 1$ completes the proof.
\end{proof}

\begin{rem}
  We already mentioned that (\ref{sublambda}) holds for extremal Fekete points.
 Condition (\ref{Deltamin}) is also easy to establish for these points, but the
connection between the two conditions has not been pointed out
before.
\end{rem}

 It is natural to ask about $\max_{k\in\{0,...,k\}}|\Delta_k^{(n)}|$.  For Fekete points, J. G\'orski proved (\cite{gorski})  using maximum principle for holomorphic functions that $$\max\limits_{k\in\{0,...,n\}} |\Delta^{(k)}(\eta^{(n)})|^{1/n} \longrightarrow {\rm cap}\,E \ \ \ \ as \ n\rightarrow\infty.$$ We will prove the same result for  pseudo Leja sequences. In what follows $a^{(n)}=\{a_0,...,a_n\}$.

\begin{prop}\label{prop:maxdelta}
  If $E$ is a compact regular set and $(a_n)_{n=0}^\infty$ is a pseudo Leja sequence of Edrei growth $(C_n)$ in $E$ then $$\max\limits_{k\in\{0,...,n\}} |\Delta^{(k)}(a^{(n)})|^{1/n} \longrightarrow {\rm cap}\,E \ \ \ \ as \ n\rightarrow\infty.$$
\end{prop}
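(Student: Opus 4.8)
The plan is to write each $\Delta^{(k)}(a^{(n)})$ as the value at a node of the derivative of the basic monic polynomial, and then bracket $\max_{k}|\Delta^{(k)}(a^{(n)})|^{1/n}$ between two quantities that both converge to ${\rm cap}\,E$. First I would record the identity $\Delta^{(k)}(a^{(n)})=w_{n+1}'(a_k)$ for $k\in\{0,\dots,n\}$, immediate from Notations \ref{nota:lagrange}, together with the elementary identity $\prod_{k=0}^n|\Delta^{(k)}(a^{(n)})|=V(a^{(n)})^2$ (each unordered pair of nodes contributes exactly twice), which is already used in the proof of Theorem \ref{tw:rownowaznosc}. These two facts give respectively the upper and the lower estimate.

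For the upper bound, since each $a_k\in E$ and $\deg w_{n+1}=n+1$, the Markov-type inequality (\ref{Markov}) yields $\max_{k}|\Delta^{(k)}(a^{(n)})|\le \|w_{n+1}'\|_E\le M_{n+1}\|w_{n+1}\|_E$. Taking $n$-th roots and using Proposition \ref{prop:Totik} (so $M_{n+1}^{1/n}\to 1$), it remains to know that $\|w_{n+1}\|_E^{1/n}\to{\rm cap}\,E$ for a pseudo Leja sequence. I would obtain this exactly as in the proof of Theorem \ref{tw:rownowaznosc}: writing $V(a^{(n)})=\prod_{j=1}^n|w_j(a_j)|$, the pseudo Leja inequality (\ref{pseudoL}) together with the fact that $\|w_j\|_E\ge({\rm cap}\,E)^j$ (the Chebyshev constant equals the capacity) gives $V(a^{(n)})\ge({\rm cap}\,E)^{n(n+1)/2}\big/\prod_{j=1}^nC_j$, while Fekete maximality gives $V(a^{(n)})\le V(\eta^{(n)})$. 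Since $V(\eta^{(n)})^{2/(n(n+1))}\to{\rm cap}\,E$ by the Fekete–Szeg\H{o} theorem and $\frac{2}{n(n+1)}\sum_{j=1}^n\log C_j\to 0$ by (\ref{Edrei}) (because $\log C_j=o(j)$), we conclude $V(a^{(n)})^{2/(n(n+1))}\to{\rm cap}\,E$, and then \cite[Theorems 1.4 and 1.5]{BBCL} give $\|w_n\|_E^{1/n}\to{\rm cap}\,E$; alternatively this last statement can simply be quoted from \cite{BCC}.

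For the lower bound, the identity $\prod_{k=0}^n|\Delta^{(k)}(a^{(n)})|=V(a^{(n)})^2$ forces $\big(\max_{k}|\Delta^{(k)}(a^{(n)})|\big)^{n+1}\ge V(a^{(n)})^2$, hence $\max_{k}|\Delta^{(k)}(a^{(n)})|^{1/n}\ge V(a^{(n)})^{2/(n(n+1))}$, whose right-hand side tends to ${\rm cap}\,E$ by the computation above. Combining this with the upper bound closes the argument. The proof is short and essentially a recombination of tools already present in the paper; the only place needing a little care is the asymptotics of the Edrei constants, namely that $\prod_{j=1}^nC_j$ is negligible on the scale $e^{cn^2}$ — this is precisely where hypothesis (\ref{Edrei}) enters — and this is a routine Ces\`aro-type estimate.
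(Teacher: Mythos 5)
Your proof is correct, and the upper bound is exactly the paper's: $\Delta^{(k)}(a^{(n)})=w_{n+1}'(a_k)$, the Markov-type inequality (\ref{Markov}), Proposition \ref{prop:Totik}, and $\|w_n\|_E^{1/n}\to{\rm cap}\,E$ (which the paper simply quotes from \cite[Theorem 2]{BCC} together with \cite[Theorems 1.4 and 1.5]{BBCL}, rather than rederiving it via the Vandermonde as you do). Where you genuinely diverge is the lower bound. The paper's argument is shorter and more direct: it observes that the $k=n$ term itself satisfies $\Delta^{(n)}(a^{(n)})=\prod_{j=0}^{n-1}(a_n-a_j)=w_n(a_n)$, so the defining inequality (\ref{pseudoL}) immediately gives $\max_k|\Delta^{(k)}(a^{(n)})|\ge|w_n(a_n)|\ge\|w_n\|_E/C_n$, and one concludes using $C_n^{1/n}\to 1$ and $\|w_n\|_E^{1/n}\to{\rm cap}\,E$. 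You instead bound the maximum from below by the geometric mean via $\bigl(\max_k|\Delta^{(k)}(a^{(n)})|\bigr)^{n+1}\ge\prod_{k=0}^n|\Delta^{(k)}(a^{(n)})|=V(a^{(n)})^2$ and then prove $V(a^{(n)})^{2/(n(n+1))}\to{\rm cap}\,E$ from $V(a^{(n)})=\prod_{j=1}^n|w_j(a_j)|$, the Chebyshev bound $\|w_j\|_E\ge({\rm cap}\,E)^j$, Fekete maximality and the Ces\`aro estimate on $\sum\log C_j$; this is essentially a reproof of the extremality of pseudo Leja points from \cite{BCC}. Your route costs a bit more work but is self-contained and yields the Vandermonde asymptotics as a by-product; the paper's route buys brevity by exploiting that one particular $\Delta^{(k)}$ is literally $w_n(a_n)$, which is the quantity the pseudo Leja condition controls.
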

\begin{proof}
Observe that $\max |\Delta^{(k)}(a^{(n)})| \ge |w_n(a_n)| \ge \|w_n\|_E/C_n$ and on the other hand $\Delta^{(k)}(a^{(n)})=w'_{n+1}(a_k)$ for all $k\in\{0,...,n\}, \ n\in\{1,2,...\}$. Therefore, by definition of $M_n$,  we have  $$ \frac{\|w_n\|_E}{C_n}\leq \max\limits_{k\in\{0,...,n\}}|\Delta^{(k)}(a^{(n)})|  \le M_{n+1}\|w_{n+1}\|_E.$$ Now it is sufficient to use $\|w_n\|_E^{1/n}\longrightarrow {\rm cap}\, E$, which  is a consequence of \cite[Theorem 2]{BCC} and \cite[Theorems 1.4 and 1.5]{BBCL}.
\end{proof}

\begin{rem}
  If $E$ is as above and  $A=\{\zeta^{(n)}: n\geq 1\}$ is an array  of distinct points  in $E$ satisfying Condition (\ref{Deltamin}),  then $$\max\limits_{k\in\{0,...,n\}} |\Delta^{(k)}(\zeta^{(n)})|^{1/n} \longrightarrow {\rm cap}\,E \ \ \ \ {\rm as} \ n\rightarrow\infty$$ as well. Indeed,
$\Delta^{(k)}(\zeta^{(n)})=w'_{n+1}(\zeta_k^{(n)})$ for all $k\in\{0,...,n\}, \ n\in\{1,2,...\}$. Hence $$\min\limits_{k\in\{0,...,n\}} |\Delta^{(k)}(\zeta^{(n)})|  \leq \max\limits_{k\in\{0,...,n\}} |\Delta^{(k)}(\zeta^{(n)})| \leq  M_{n+1}||w_{n+1}||_E$$ and the convergence  follows as in Proposition \ref{prop:maxdelta}.
\end{rem}

We have the following consequences of Theorems \ref{cor:Leb_dla_Leja} and  \ref{tw:rownowaznosc}:

\begin{cor}  \label{cor:deltaconvergence}  Assume that the outer boundary of $E$ is a finite union of  quasiconformal arcs and $(a_n)_{n=0}^\infty$ is a pseudo Leja sequence in $\partial\widehat{E}$ of bounded Edrei growth. Then
\begin{equation*}
  \lim\limits_{n\rightarrow\infty} \min\limits_{k\in\{0,...,n\}} |\Delta^{(k)}(a^{(n)})|^{1/n} = {\rm cap}\,E.
\end{equation*}
\end{cor}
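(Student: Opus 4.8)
The plan is to combine the two results cited, which fit together immediately. By Theorem~\ref{cor:Leb_dla_Leja}, the hypotheses (outer boundary of $E$ a finite union of quasiconformal arcs, $(a_n)_{n=0}^\infty \subset \partial\widehat E$ a pseudo Leja sequence of bounded Edrei growth) guarantee that the Lebesgue constants are subexponential, i.e. Condition~(\ref{Leb_dla_Leja}) holds with $a^{(n)}=\{a_0,\dots,a_n\}$. This is precisely Condition~(\ref{sublambda}) for the array $A=\{a^{(n)}:n\ge1\}$.

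Next I would invoke Theorem~\ref{tw:rownowaznosc} (equivalently, Main Theorem~\ref{mthm:Lebesgue}). Since $E$ is a compact regular set whose complement is connected (its outer boundary being a union of arcs, and more to the point we may pass to $\widehat E$ since $g_E=g_{\widehat E}$ and capacities agree), $E$ is polynomially convex and regular, so the hypotheses of Theorem~\ref{tw:rownowaznosc} are met. That theorem states the equivalence of~(\ref{sublambda}) and~(\ref{Deltamin}); applying the implication $(\ref{sublambda})\Rightarrow(\ref{Deltamin})$ to the array $A$ yields exactly
\[
\lim_{n\to\infty}\min_{k\in\{0,\dots,n\}}\bigl|\Delta^{(k)}(a^{(n)})\bigr|^{1/n}={\rm cap}\,E,
\]
which is the assertion of the corollary.

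There is essentially no obstacle here: the corollary is a formal composition of the two preceding theorems, and the proof is a one- or two-sentence deduction. The only point deserving a word of care is the regularity/polynomial-convexity bookkeeping, namely checking that Theorem~\ref{tw:rownowaznosc} is applicable to $E$ itself (or to $\widehat E$, with the harmless observation that $\Delta^{(k)}$ depends only on the nodes $a_i\in\partial\widehat E\subset\widehat E$ and that capacity and Lebesgue constants are unchanged when $E$ is replaced by $\widehat E$). Thus the proof reads simply: by Theorem~\ref{cor:Leb_dla_Leja} the array $\{a^{(n)}\}$ satisfies~(\ref{sublambda}), and then Theorem~\ref{tw:rownowaznosc} gives~(\ref{Deltamin}), which is the claim.
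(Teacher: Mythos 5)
Your proposal is correct and follows exactly the paper's route: the paper states this corollary as an immediate consequence of Theorems~\ref{cor:Leb_dla_Leja} and~\ref{tw:rownowaznosc}, with no further proof given, which is precisely the two-step deduction you describe. Your extra remark on passing to $\widehat{E}$ is harmless and sensible bookkeeping that the paper leaves implicit.
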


\begin{cor} \label{cor:images}  Assume that the outer boundary of $E$ is a $\mathcal{C}^2$ curve and $a_n =\varphi(e_n), \ n\in \{0, 1,2,...\}$, where $\varphi$ denotes the conformal map of the exterior of the unit disk $\mathbb{D}$ onto the exterior of $E$ and $(e_n)_{n=0}^\infty$ is a  Leja sequence in $\mathbb{D}$.
Then
\begin{equation*}
  \lim\limits_{n\rightarrow\infty} \min\limits_{k\in\{0,...,n\}} |\Delta^{(k)}(a^{(n)})|^{1/n} = {\rm cap}\,E.
\end{equation*}
\end{cor}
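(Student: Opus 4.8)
The plan is to reduce Corollary \ref{cor:images} to Corollary \ref{cor:deltaconvergence} by verifying that its hypotheses are met. Concretely, I would argue as follows. Since the outer boundary of $E$ is a $\mathcal{C}^2$ curve, it is in particular a $\mathcal{C}^2$ arc (or finite union of such), hence quasiconformal by the sufficient condition recalled in Section \ref{s:green} just before Proposition \ref{prop:C2arcs}. So the quasiconformality assumption of Corollary \ref{cor:deltaconvergence} is automatic. It then remains to check two things: (a) that the points $a_n=\varphi(e_n)$ actually lie on $\partial\widehat{E}$, and (b) that $(a_n)_{n=0}^\infty$ is a pseudo Leja sequence in $\partial\widehat{E}$ of bounded Edrei growth.

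For (a): a Leja sequence $(e_n)$ in $\mathbb{D}$ lies on $\partial\mathbb{D}$ (this is part of the classical definition of Leja points on the disk, and is consistent with the convention adopted in the paper that pseudo Leja sequences are taken on the outer boundary); since $\varphi$ maps the exterior of $\overline{\mathbb{D}}$ conformally onto the exterior of $E$ and extends as a homeomorphism of the boundaries when $\partial\widehat E$ is a Jordan curve (here even $\mathcal{C}^2$), we get $a_n=\varphi(e_n)\in\varphi(\partial\mathbb{D})=\partial\widehat{E}$. For (b): this is precisely the content of \cite[Theorem 6]{BCC}, which is already invoked in the paragraph following Conjecture \ref{conjLeb} — the image under the Riemann map of a Leja sequence on the circle is a pseudo Leja sequence, and moreover its Edrei growth is at most polynomial; in particular $C_n^{1/n}\to 1$, so condition (\ref{Edrei}) holds and the growth is (bounded, in the sense used for Corollary \ref{cor:deltaconvergence}, or at worst) polynomial. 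One should be slightly careful here: Corollary \ref{cor:deltaconvergence} as stated requires \emph{bounded} Edrei growth, whereas \cite[Theorem 6]{BCC} only gives polynomial growth; but an inspection of the proof of Theorem \ref{cor:Leb_dla_Leja} (via \cite{A}) shows that the separation estimate of Proposition \ref{prop:sep} combined with weak-star convergence of the measures $\mu_n$ still yields (\ref{Leb_dla_Leja}) when $C_n$ grows at most polynomially, since $C_n^{1/n}\to 1$ and the constant $c$ in condition (2) of that proof may be allowed to depend mildly on $n$ without destroying the argument — alternatively one cites \cite{CP1}, \cite{Ir} directly, which establish (\ref{sublambda}) for exactly these sequences on the disk and transfer it along $\varphi$.

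Having established that the hypotheses hold, Corollary \ref{cor:deltaconvergence} gives immediately
\[
\lim_{n\rightarrow\infty}\min_{k\in\{0,\dots,n\}}\left|\Delta^{(k)}(a^{(n)})\right|^{1/n}={\rm cap}\,E,
\]
which is the desired conclusion. Alternatively, and perhaps more cleanly, one can bypass Corollary \ref{cor:deltaconvergence} and argue directly: by \cite{CP1} and \cite{Ir} (cited in the discussion after Conjecture \ref{conjLeb}), condition (\ref{Leb_dla_Leja}), i.e. (\ref{sublambda}), holds for the array $a^{(n)}=\{a_0,\dots,a_n\}$; then Theorem \ref{tw:rownowaznosc} (equivalently, Main Theorem \ref{mthm:Lebesgue}) converts (\ref{sublambda}) into precisely (\ref{Deltamin}), which is the claim.

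The main obstacle I anticipate is not conceptual but a matter of matching hypotheses precisely: confirming that ``Leja sequence in $\mathbb{D}$'' in the statement refers to points on $\partial\mathbb{D}$ so that the $a_n$ land on $\partial\widehat E$, and reconciling the ``polynomial Edrei growth'' supplied by \cite[Theorem 6]{BCC} with the ``bounded Edrei growth'' hypothesis of Corollary \ref{cor:deltaconvergence} (resp. Theorem \ref{cor:Leb_dla_Leja}). The cleanest fix is to route the argument through the known subexponentiality of Lebesgue constants for these specific Riemann-map images — references \cite{CP1}, \cite{C}, \cite{O}, \cite{Ir} for the disk plus the transfer principle of \cite{TT}/\cite{A} — and then apply Theorem \ref{tw:rownowaznosc}; everything else is a direct substitution.
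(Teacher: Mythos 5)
Your ``alternative'' argument is exactly the paper's intended derivation: Corollary \ref{cor:images} is not obtained from Corollary \ref{cor:deltaconvergence}, but from the fact (recorded in the discussion after Conjecture \ref{conjLeb}, with references \cite{CP1} and \cite{Ir}) that condition (\ref{sublambda}) holds for Riemann-map images of Leja points of the disk, combined with the equivalence of Theorem \ref{tw:rownowaznosc}. So the conclusion and the clean route you end with are correct and match the paper.

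Your primary route, however, has a real gap that you correctly sense but do not actually close. Corollary \ref{cor:deltaconvergence} and Theorem \ref{cor:Leb_dla_Leja} require \emph{bounded} Edrei growth, while \cite[Theorem 6]{BCC} only gives at most polynomial growth of $(C_n)$ for the sequence $a_n=\varphi(e_n)$ (the paper itself stresses in Example \ref{exa:C2curve} that this sequence is ``not necessarily of bounded Edrei growth''). The separation constant in Proposition \ref{prop:sep} is $\frac{1}{2eC_n}$, so if $C_n\to\infty$ the constant $c$ in condition (2) of the proof of Theorem \ref{cor:Leb_dla_Leja} degenerates, and Andrievskii's criterion as used there genuinely needs $c$ independent of $n$; your remark that the argument ``may be allowed to depend mildly on $n$'' is an unsubstantiated claim, and the paper explicitly says its adaptation of \cite{A} does \emph{not} extend easily beyond bounded Edrei growth. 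The fix is precisely to abandon that route and use \cite{CP1}/\cite{Ir} plus Theorem \ref{tw:rownowaznosc}, as you do. (Your point (a) is fine: classical Leja points of $\overline{\mathbb{D}}$ lie on the unit circle by the maximum principle, and $\varphi$ extends homeomorphically to the $\mathcal{C}^2$ boundary, so the $a_n$ do land on $\partial\widehat{E}$.)
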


For every $n \in \mathbb{N}$ let us now  fix a number $j=j_n \in \{0,...,n\}$ such that \begin{equation}\label{deltajn}
|\Delta_{j_n}^{(n)}|=\min_k|\Delta_k^{(n)}|                                                                                                         \end{equation}
  and form the Lagrange polynomial $L_n :=L^{(j_n)}(\cdot,\zeta^{(n)})$ (recall Notations \ref{nota:lagrange}). Under the assumption that $(\ref{Deltamin})$ holds, we have the following:
\begin{lem}\label{lem:normL_n} If $E$ is a polynomially convex regular compact subset of $\cj$, $(\ref{Deltamin})$ holds and $L_n$ is defined as above, then
 \[ \lim_{n\rightarrow \infty} \sqrt[n]{||L_n||_E}=1.\]
\end{lem}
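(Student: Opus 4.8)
The plan is to bound $\|L_n\|_E$ from above and below, showing both bounds have $n$-th root tending to $1$. The lower bound is immediate: since $L_n$ interpolates the value $1$ at the node $\zeta_{j_n}^{(n)}\in E$, we have $\|L_n\|_E\ge |L_n(\zeta_{j_n}^{(n)})|=1$, hence $\sqrt[n]{\|L_n\|_E}\ge 1$ for every $n$.

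For the upper bound I would follow the same strategy used in the proof of the implication $(\ref{Deltamin})\Longrightarrow(\ref{sublambda})$ of Theorem \ref{tw:rownowaznosc}, but applied to the single polynomial $L_n$ rather than the full Lebesgue function. First, by the maximum principle for the subharmonic function $|L_n|$, we have $\|L_n\|_E=\|L_n\|_{E^{1/M_{n+1}}}$ after passing to the dilation, and then the estimate $\|p\|_{E^{1/M_{n+1}}}\le e\,\|p\|_E$ for polynomials of degree at most $n+1$ (the Taylor-formula argument from \cite[Theorem 3.3]{Pl} cited in the excerpt) gives
\[
\|L_n\|_E \;=\; \frac{1}{|\Delta_{j_n}^{(n)}|}\,\Bigl\|\!\!\prod_{k\neq j_n}(\cdot-\zeta_k^{(n)})\Bigr\|_E
\;\le\; \frac{1}{|\Delta_{j_n}^{(n)}|}\cdot\frac{\|w_{n+1}\|_{E^{1/M_{n+1}}}}{1/M_{n+1}}
\;\le\; M_{n+1}\,\frac{e\,\|w_{n+1}\|_E}{|\Delta_{j_n}^{(n)}|},
\]
where I have used $w_{n+1}(z)=(z-\zeta_{j_n}^{(n)})\prod_{k\neq j_n}(z-\zeta_k^{(n)})$ together with the lower bound $|z-\zeta_{j_n}^{(n)}|\ge 1/M_{n+1}$ valid on $\partial(E^{1/M_{n+1}})$. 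Since by definition $|\Delta_{j_n}^{(n)}|=\min_k|\Delta_k^{(n)}|$, the right-hand side is exactly $M_{n+1}\,e\,\|w_{n+1}\|_E/\min_k|\Delta_k^{(n)}|$.

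It remains to take $n$-th roots. The assumption $(\ref{Deltamin})$ gives $\min_k|\Delta_k^{(n)}|^{1/n}\to{\rm cap}\,E$; as established inside the proof of Theorem \ref{tw:rownowaznosc}, condition $(\ref{Deltamin})$ also forces $\|w_{n+1}\|_E^{1/n}\to{\rm cap}\,E$ (via the Fekete–Szeg\H o asymptotics for $V(\zeta^{(n)})$ and \cite[Theorems 1.4 and 1.5]{BBCL}); and $M_{n+1}^{1/n}\to 1$ by Proposition \ref{prop:Totik}. Hence
\[
\sqrt[n]{\|L_n\|_E}\;\le\;\Bigl(M_{n+1}\,e\,\frac{\|w_{n+1}\|_E}{\min_k|\Delta_k^{(n)}|}\Bigr)^{1/n}\longrightarrow \frac{{\rm cap}\,E}{{\rm cap}\,E}=1,
\]
which together with the trivial lower bound $\sqrt[n]{\|L_n\|_E}\ge 1$ yields the claim. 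The only mild subtlety — not really an obstacle — is making sure the polynomial convexity and regularity of $E$ are invoked precisely where the maximum principle / dilation comparison is used and where the capacity-Chebyshev relations are applied; all of this is already packaged in the earlier parts of the excerpt, so the proof is essentially a one-polynomial specialization of machinery already developed.
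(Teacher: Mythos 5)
Your proof is correct and follows essentially the same route as the paper's: the upper bound via the dilation $E^{1/M_{n+1}}$, the maximum principle, the Ple\'sniak-type estimate $\|p\|_{E^{1/M_{n+1}}}\le e\|p\|_E$ and Proposition \ref{prop:Totik} is exactly the argument given there, while your lower bound $\|L_n\|_E\ge |L_n(\zeta_{j_n}^{(n)})|=1$ is a slightly more direct substitute for the paper's $\|L_n\|_E\ge \|w_{n+1}\|_E/(\mathrm{diam}\,E\cdot\min_k|\Delta_k^{(n)}|)$. One cosmetic slip: the claimed equality $\|L_n\|_E=\|L_n\|_{E^{1/M_{n+1}}}$ should be the inequality $\|L_n\|_E\le\|L_n\|_{E^{1/M_{n+1}}}$ (the sup over the larger set dominates, and the maximum principle only moves that sup to the boundary of the dilation), but since your displayed chain uses only $\le$ throughout, the conclusion is unaffected.
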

\begin{proof}
As in the proof of Theorem \ref{tw:rownowaznosc} it is enough to observe that
\[
||L_n||_E\geq \frac1{{\rm diam}\, E}  \frac{\|w_{n+1}\|_E}{\min\limits_{k\in\{0,...,n\}} |\Delta_k^{(n)}|}
\]
and
\begin{eqnarray*}
||L_n||_E\leq \max_{z\in \partial\,\left(E^{1/M_{n+1}}\right)}  \frac{|w_{n+1}(z)|}{|z-\zeta_{j_n}^{(n)}|\, |\Delta_{j_n}^{(n)}|}\leq  M_{n+1}\, \frac{e  \|w_{n+1}\|_{E}}{\min\limits_{k\in\{0,...,n\}} |\Delta_k^{(n)}|}.
\end{eqnarray*}
\end{proof}

We can now prove an analogue of a theorem due to J. Siciak, \cite[Theorem 1]{Siciak}. We use symbols from  Notations \ref{nota:zbiory} and \ref{nota:lagrange}.
\begin{thm}\label{thm:bounds}
 Let $E$ be a polynomially convex  regular  compact subset of $\cj$ and $\omega$ be a modulus of continuity of $g_E$.  Let $A=\{\zeta^{(n)}: n\geq 1\}$ be an arbitrary array of distinct points  and let $L_n=L^{(j_n)}(\cdot,\zeta^{(n)})$ with $j_n$ chosen to satisfy $(\ref{deltajn})$. Then, for each $n$ and $z
 \in D_n$,
\begin{eqnarray*}
  \frac1n \log \frac1{||L_n||_E}\leq g_E(z)-\log \sqrt[n]{|L_n(z)|}\leq  \frac3n\log[(n+1)\Theta(E)]+\omega\left(\frac1{n^2}\right),
\\  \frac1n \log \frac1{||L_n||_E}\leq \log \frac{|\Delta_{j_n}^{(n)}|^{1/n}}{{\rm cap}\, E} \leq  \frac3n\log[(n+1)\Theta(E)]+\omega\left(\frac1{n^2}\right).
 \end{eqnarray*}
\end{thm}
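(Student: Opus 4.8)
The plan is to prove the two chains of inequalities by relating the Lagrange polynomial $L_n$ to the extremal function $\Phi_E = \exp g_E$ and to the polynomial $w_{n+1}$, exploiting that $L_n$ is a polynomial of degree $n$ and that its nodes lie in $E$. For the lower bounds, which are the easy half, I would use Lemma \ref{lem:normL_n}'s computation: since $L_n(z) = w_{n+1}(z)/\bigl((z-\zeta_{j_n}^{(n)})\Delta_{j_n}^{(n)}\bigr)$ and $\|L_n\|_E \le M_{n+1}\,e\,\|w_{n+1}\|_E / \min_k|\Delta_k^{(n)}|$, the Chebyshev-constant estimate $\|w_{n+1}\|_E^{1/(n+1)} \ge \mathrm{cap}\,E$ plus the bound $|\Delta_{j_n}^{(n)}| = |w_{n+1}'(\zeta_{j_n}^{(n)})| \le M_{n+1}\|w_{n+1}\|_E$ give, after taking $n$-th roots, the left inequality $\frac1n\log\frac1{\|L_n\|_E} \le \log\frac{|\Delta_{j_n}^{(n)}|^{1/n}}{\mathrm{cap}\,E}$; combined with the bound $|L_n(z)| \le \|L_n\|_E \cdot |w_{n+1}(z)|/(|z-\zeta_{j_n}^{(n)}|\,\|w_{n+1}\|_E) \cdot (\dots)$ one likewise gets the lower bound $\frac1n\log\frac1{\|L_n\|_E} \le g_E(z) - \log\sqrt[n]{|L_n(z)|}$ by a direct comparison at $z$. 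In fact the cleanest route for the left-hand inequality in the first chain is simply: $|L_n(z)| \le \|L_n\|_E \cdot \Phi_E(z)^n$ is false in general, so instead I observe $\Phi_{n,E}(z)^n \ge |L_n(z)|/\|L_n\|_E$ by Definition \ref{defin:extremalfcn}, whence $g_E(z) = \log\Phi_E(z) \ge \frac1n\log|L_n(z)| - \frac1n\log\|L_n\|_E$.

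For the upper bounds — the substantive part, following Siciak's \cite[Theorem 1]{Siciak} as adapted in inequality (\ref{FeketeGreenbound}) — I would mimic the Fekete-point argument. The key point is that $L_n$ is monic-like after normalization: writing $L_n(z) = \Delta_{j_n}^{(n)}{}^{-1}\prod_{k\ne j_n}(z-\zeta_k^{(n)})$, the product $q_n(z):=\prod_{k\ne j_n}(z-\zeta_k^{(n)})$ is a monic polynomial of degree $n$ with zeros in $E$. By the Bernstein–Walsh / Kalmár–Walsh inequality together with the elementary bound on $\|q_n\|_E$ in terms of $\mathrm{diam}\,E$, one estimates $|q_n(z)|$ from above on $D_n$ by $\mathrm{cap}\,E^{\,n}\exp(n g_E(z))$ times a correction factor that is controlled by evaluating $g_E$ at distance $1/n^2$ from $E$, i.e. by $\omega(1/n^2)$, and by the crude polynomial bound $[(n+1)\Theta(E)]^3$ coming from the same counting as in the Fekete case. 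Meanwhile $|\Delta_{j_n}^{(n)}|^{1/n} \ge \mathrm{cap}\,E$ up to the same correction, because $\Delta_{j_n}^{(n)} = q_n(\zeta_{j_n}^{(n)})$ evaluated at a point of $E$ and $q_n$ is monic; a lower bound here follows from the Bernstein–Walsh inequality applied at points slightly outside $E$ combined with a minimum-modulus-type argument, or directly from the fact that $\min_k|\Delta_k^{(n)}| \ge (\mathrm{cap}\,E)^n/(\Lambda_n(E)\,\mathrm{diam}\,E)$ — but since we do not assume subexponential Lebesgue constants here, I would instead use the self-contained estimate from the Fekete-ordering inequality (\ref{order})–(\ref{zgory}) transplanted to the product $q_n$.

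Assembling: on $D_n$ we have
\[
g_E(z) - \tfrac1n\log|L_n(z)| = g_E(z) - \tfrac1n\log|q_n(z)| + \tfrac1n\log|\Delta_{j_n}^{(n)}|,
\]
and I bound the first two terms together (the "$g_E$ minus log of a monic polynomial with zeros in $E$" quantity) by $\frac3n\log[(n+1)\Theta(E)] + \omega(1/n^2)$ exactly as in (\ref{FeketeGreenbound}) — this is where the factor $3$ and $\Theta(E)=(2+\mathrm{diam}\,E)^{1/3}$ enter — while the third term $\frac1n\log|\Delta_{j_n}^{(n)}|$ is bounded above by $\log\mathrm{cap}\,E$ plus the same kind of error, using $|\Delta_{j_n}^{(n)}| \le M_{n+1}\|w_{n+1}\|_E$ and the Markov-constant subexponentiality is \emph{not} available pointwise so I instead route through $|\Delta_{j_n}^{(n)}| \le |\Delta_k^{(n)}|$-independent bounds; the simultaneous appearance of $\log\frac{|\Delta_{j_n}^{(n)}|^{1/n}}{\mathrm{cap}\,E}$ in the second chain of the statement is then \emph{the same inequality read off by setting $z\to\infty$}, since $\frac1n\log|L_n(z)| - \log\frac{|z|}{?}\to \log\frac{1}{\mathrm{cap}\,E\cdot|\Delta_{j_n}^{(n)}|^{1/n}\cdot(\dots)}$ via Lemma \ref{lemma:continuity}. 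So the second chain is obtained from the first by letting $z\to\infty$ in $D_n$ and invoking $\Phi_E(z)\mathrm{cap}\,E/|z|\to 1$. The main obstacle I anticipate is getting the \emph{lower} bound $\frac1n\log\frac1{\|L_n\|_E}$ to be genuinely a lower bound simultaneously for $g_E(z)-\frac1n\log|L_n(z)|$ and for $\log\frac{|\Delta_{j_n}^{(n)}|^{1/n}}{\mathrm{cap}\,E}$ — the former is the Bernstein–Walsh inequality $|L_n(z)|\le\|L_n\|_E\exp(n g_E(z))$ which holds since $\|L_n\|_E^{-1}L_n\in\mathcal{P}_n^\*$, so $g_E(z)\ge\frac1n\log\frac{|L_n(z)|}{\|L_n\|_E}$; the latter needs the Chebyshev bound $\|w_{n+1}\|_E\ge(\mathrm{cap}\,E)^{n+1}$ and the identity-like estimate $|\Delta_{j_n}^{(n)}|\ge\frac{(\mathrm{cap}\,E)^{n+1}}{\mathrm{diam}\,E\cdot\|L_n\|_E\cdot|z-\zeta_{j_n}^{(n)}|}$ evaluated cleverly, which after taking roots collapses to the stated inequality. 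Threading these two lower bounds through a single constant $\|L_n\|_E$ is the delicate bookkeeping step, but it is exactly parallel to Siciak's original argument for Fekete points, with $L_n$ playing the role that $L^{(0)}(\cdot,\eta^{(n)})$ played there.
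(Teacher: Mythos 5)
Your handling of the lower bounds is essentially correct and coincides with the paper's: the Bernstein--Walsh inequality $|L_n(z)|\le \|L_n\|_E\,\Phi_E(z)^n$ (which, contrary to your aside, is \emph{not} false in general --- it is literally the same statement as $\Phi_{n,E}(z)^n\ge |L_n(z)|/\|L_n\|_E$, valid because $\|L_n\|_E^{-1}L_n\in\mathcal{P}_n^*$) gives the left inequality of the first chain on all of $D_\infty$, and the left inequality of the second chain then follows by letting $z\to\infty$ via Lemma \ref{lemma:continuity}. The gap is in the upper bound, which is the substantive half. You propose to split $L_n=q_n/\Delta_{j_n}^{(n)}$ with $q_n(z)=\prod_{k\ne j_n}(z-\zeta_k^{(n)})$ and estimate the two factors separately, but both separate estimates fail for an \emph{arbitrary} array of distinct points. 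There is no lower bound $|q_n(z)|^{1/n}\ge {\rm cap}\,E\cdot e^{g_E(z)}\cdot e^{-O(\log n/n)-\omega(1/n^2)}$ for a general monic polynomial with zeros in $E$: if the nodes cluster near a single point $a\in E$, then $|q_n(z)|^{1/n}\approx |z-a|$, which for $z$ at distance $1/n^2$ from $a$ produces an error of order $\log(n^2)$, not $O(\log n/n)$. Likewise, the claim that $|\Delta_{j_n}^{(n)}|^{1/n}\ge {\rm cap}\,E$ up to the same correction is essentially condition (\ref{Deltamin}), which by Theorem \ref{tw:rownowaznosc} is \emph{equivalent} to subexponential growth of the Lebesgue constants --- exactly the hypothesis this theorem does not make (this is why the lower bound in the statement is $\frac1n\log\frac1{\|L_n\|_E}$, which may be very negative). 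Inequality (\ref{zgory}) is specific to Fekete points and does not ``transplant'' to $q_n$; nor does an appeal to $M_{n+1}$ and $\|w_{n+1}\|_E$ yield the explicit, non-asymptotic bound $\frac3n\log[(n+1)\Theta(E)]$ required for each fixed $n$.

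What your proposal is missing is the coupling of the two factors through the choice (\ref{deltajn}) together with the partition of unity $\sum_{k=0}^{n}L^{(k)}(\cdot,\zeta^{(n)})\equiv 1$. Since $|\Delta_{j_n}^{(n)}|\le|\Delta_k^{(n)}|$ for every $k$, one gets $|L_n(z)|\ge |L^{(k)}(z,\zeta^{(n)})|\,r(z)/R(z)$ with $r(z)=\dist(z,E)$ and $R(z)=\max_{\zeta\in E}|z-\zeta|$, and the identity $\sum_k L^{(k)}\equiv 1$ forces $\max_k|L^{(k)}(z,\zeta^{(n)})|\ge 1/(n+1)$; together these give the pointwise bound $1/|L_n(z)|\le (n+1)R(z)/r(z)$ on $D_\infty$, with no reference to ${\rm cap}\,E$ or to $\|w_{n+1}\|_E$. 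One then applies the maximum principle to $U_n=g_E-\frac1n\log|L_n|+\frac1n\log\|L_n\|_E$, which is harmonic and nonnegative on $D_\infty\cup\{\infty\}$ and whose boundary values on $\partial D_n$ are at most $\frac1n\log\frac{(n+1)R(z)}{r(z)}+\omega(1/n^2)+\frac1n\log\|L_n\|_E\le \frac3n\log[(n+1)\Theta(E)]+\omega(1/n^2)+\frac1n\log\|L_n\|_E$; evaluating $U_n$ at $z\in D_n$ and at $\infty$ (where $U_n(\infty)=\log\bigl(|\Delta_{j_n}^{(n)}|^{1/n}/{\rm cap}\,E\bigr)+\frac1n\log\|L_n\|_E$ by Lemma \ref{lemma:continuity}) yields both chains at once. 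Without these two ingredients the upper bound cannot be recovered, so as written the argument has a genuine gap.
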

\begin{proof} For the most part, especially concerning the upper bound, it is enough to follow the lines of the proof of \cite[Theorem 1]{Siciak} with $\zeta^{(n)}$ replacing the Fekete extremal points $\eta^{(n)}$.

Let $r(z):=\dist(z,E), R(z):=\max_{\zeta\in E}|z-\zeta|$. For any $k\in\{0,1,...,n\}$,
\begin{eqnarray*}
 |L_n(z)|=|L^{(k)}(z, \zeta^{(n)})|\frac{|\Delta^{(n)}_k|}{|\Delta^{(n)}_{j_n}|}\frac{|z-\zeta_k^{(n)}|}{|z-\zeta_{j_n}^{(n)}|}\geq |L^{(k)}(z, \zeta^{(n)})|\frac{r(z)}{R(z)},  \qquad z \in \mathbb{C}.
\end{eqnarray*}
The identity
$$
1\equiv \sum_{k=0}^n L^{(k)}(z, \zeta^{(n)})
$$
implies that
$$
  \max_{k\in\{0,1,...,n\}}|L^{(k)}(z, \zeta^{(n)})|\geq \frac1{n+1},\qquad z\in\cj.
$$
From the inequalities above we obtain therefore
$$
\frac{1}{|L_n(z)|}\leq (n+1)\frac{R(z)}{r(z)}, \qquad z\in D_\infty.
$$
On the other hand, by Definition \ref{defin:extremalfcn},
$$
\sqrt[n]{\frac{ \left|L_n(z)\right|}{||L_n||_E}}\leq \Phi_E(z), \qquad z\in\cj.
$$
These estimates  together with Lemma \ref{lemma:continuity} imply that the function \begin{eqnarray*}
U_n(z):=\left\{\begin{array}{ll}
  \displaystyle \log\frac{\Phi_E(z)}{|L_n(z)|^{1/n}}+\frac1n\log {||L_n||_E}, & z\in D_\infty \\
  \displaystyle \log\frac{{|\Delta_{j_n}^{(n)}|^{1/n}}}{{\rm cap}E}+\frac1n\log {||L_n||_E},& z=\infty,
\end{array}\right.
\end{eqnarray*}  is
 harmonic in $D_\infty\cup\{\infty\}$ and satisfies
 \[
 0\leq U_n(z)\leq \frac1n\log\frac{(n+1)R(z)}{r(z)}+\frac1n\log {||L_n||_E}+\omega(\delta),
 \]
 if $z\in D_\infty$ is such that $r(z)\leq \delta$.
  Therefore, by the maximum principle for harmonic functions, just as in the proof of \cite[Theorem 1(a)]{Siciak}, we obtain the assertion.
\end{proof}

 Note that the  lower bound   in the statement  of Theorem \ref{thm:bounds} is not necessarily $0$. However, if we assume additionally (\ref{Deltamin}), then by Lemma \ref{lem:normL_n}
the left-hand side tends to $0$ as $n \to \infty$.  This implies convergence of functions $\left(\log\sqrt[n]{|L_n|}\right)_{n=1}^\infty$ to $g_E$ in $D_\infty$.

We conclude this section by distinguishing some sequences of polynomials $(Q_n)$   such that  $g_E(z)-\log\sqrt[n]{|Q_n(z)|} \longrightarrow 0$  in $D_\infty$ as $n \to \infty$ with good  convergence rate. More precisely, we have the following:

\begin{prop}\label{prop: Bn}
Let $E$ be a regular compact subset of $\cj$ and $\omega$ be the modulus of continuity of its Green function $g_E$. Then there exist a sequence $(Q_n)_{n=1}^\infty$ of polynomials and a sequence $(B_n)_{n=1}^\infty$ of positive numbers satisfying $B_n =O(\frac{\log n}{n})$ such that, for all $n\in \nj$,  $Q_n$ is of degree $n$,  $\lim_{n\rightarrow \infty} \sqrt[n]{||Q_n||_E}=1$ and
$$\displaystyle g_E(z)-\log\sqrt[n]{|Q_n(z)|}\leq B_n+\omega(1/n^2), \qquad  z\in D_n.$$

\end{prop}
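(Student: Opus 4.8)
The plan is to obtain $(Q_n)$ by combining two ingredients already developed in the paper: the Fekete-point bound \eqref{FeketeGreenbound}, which furnishes the $O(\log n / n)$ term, and the well-known fact that for Fekete points the Lebesgue constants are subexponential (stated just after \eqref{FeketeGreenbound}). Concretely, for each $n$ let $\eta^{(n)}$ be a Fekete $(n+1)$-tuple for $E$ ordered so that \eqref{order} holds, and set $Q_n := L^{(0)}(\cdot,\eta^{(n)})$, which is a polynomial of degree $n$. Then the desired pointwise inequality on $D_n$ with $B_n := \frac{3}{n}\log[(n+1)\Theta(E)] = O(\log n / n)$ is exactly \eqref{FeketeGreenbound}, since $\omega$ is a modulus of continuity of $g_E$ and $\Theta(E)$ is a constant depending only on $E$.

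What remains is to check $\sqrt[n]{\|Q_n\|_E} \to 1$. The upper bound is immediate from \eqref{zgory}, which gives $\|Q_n\|_E = \|L^{(0)}(\cdot,\eta^{(n)})\|_E \le 1$. For the lower bound one argues exactly as in the proof of Theorem \ref{tw:rownowaznosc}: since $L^{(0)}(\cdot,\eta^{(n)}) = w_{n+1}(\cdot)/\big((z-\eta_0^{(n)})\Delta^{(0)}(\eta^{(n)})\big)$ and, by the ordering \eqref{order}, $|\Delta^{(0)}(\eta^{(n)})| = \min_k|\Delta^{(k)}(\eta^{(n)})|$, the estimate
\[
\|Q_n\|_E \ge \frac{1}{{\rm diam}\,E}\,\frac{\|w_{n+1}\|_E}{\min_k|\Delta^{(k)}(\eta^{(n)})|} \ge \frac{({\rm cap}\,E)^{n+1}}{{\rm diam}\,E\cdot M_{n+1}\|w_{n+1}\|_E}
\]
holds, using that ${\rm cap}\,E$ is the Chebyshev constant (so $({\rm cap}\,E)^{n+1}\le\|w_{n+1}\|_E$) for the numerator and \eqref{Markov} (so $|\Delta^{(k)}(\eta^{(n)})|=|w_{n+1}'(\eta_k^{(n)})|\le M_{n+1}\|w_{n+1}\|_E$) for the denominator. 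Now $\|w_{n+1}\|_E^{1/n}\to{\rm cap}\,E$ for Fekete points — this is the Kalm\'ar–Walsh theorem applied together with $V(\eta^{(n)})^{2/(n(n+1))}\to{\rm cap}\,E$, or alternatively follows since \eqref{sublambda} holds for Fekete points — and $\sqrt[n]{M_{n+1}}\to 1$ by Proposition \ref{prop:Totik}, so the lower bound tends to $1$, giving $\sqrt[n]{\|Q_n\|_E}\to1$.

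There is essentially no serious obstacle here; the statement is a repackaging of \eqref{FeketeGreenbound} together with the standard subexponentiality of Fekete Lebesgue constants. The only point requiring a little care is to make sure that the constant implicit in $B_n = O(\log n/n)$ depends only on $E$ (it does, through $\Theta(E)$), and that the restriction to $z\in D_n$ — rather than all of $D_\infty$ — is exactly the domain on which \eqref{FeketeGreenbound} is valid, which it is by construction of $D_n$ in Notation \ref{nota:zbiory}. Alternatively, one could phrase the proof using the $L_n$ of Theorem \ref{thm:bounds} for any array satisfying \eqref{Deltamin} (e.g. Fekete points, or the pseudo Leja sequences of Corollary \ref{cor:deltaconvergence}), invoking Theorem \ref{thm:bounds} for the upper bound and Lemma \ref{lem:normL_n} for $\sqrt[n]{\|L_n\|_E}\to1$; I would mention this as a remark but carry out the argument with Fekete points since there \eqref{zgory} makes $\|Q_n\|_E\le1$ transparent and no appeal to \eqref{Deltamin} is needed.
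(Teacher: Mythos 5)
Your proof is correct and follows essentially the same route as the paper, which likewise takes $Q_n = L^{(0)}(\cdot,\eta^{(n)})$ for Fekete tuples ordered as in (\ref{order}) and derives the assertion from (\ref{zgory}) and (\ref{FeketeGreenbound}) (and also records the pseudo Leja alternative via Theorem \ref{thm:bounds} and Lemma \ref{lem:normL_n}). The only superfluous part is your lower-bound argument for $\|Q_n\|_E$: since $L^{(0)}(\eta_0^{(n)},\eta^{(n)})=1$ at the node $\eta_0^{(n)}\in E$, this together with (\ref{zgory}) gives $\|Q_n\|_E=1$ exactly, so $\sqrt[n]{\|Q_n\|_E}\to 1$ is immediate.
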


\begin{proof}
One can take a Fekete extremal $(n+1)$-tuple $\eta^{(n)}$ for $E$ ordered so that (\ref{order}) holds. Then the assertion follows for $Q_n:=L^{(0)}(\cdot, \eta^{(n)})$ from (\ref{zgory}) and (\ref{FeketeGreenbound}).

Alternatively, one can take a pseudo Leja sequence $(a_n)$ in $E$  satisfying condition (\ref{Deltamin}). Then the assertion follows for $Q_n:=L_n=L^{(j_n)}(\cdot, a^{(n)})$ from  Lemma \ref{lem:normL_n} and  Theorem \ref{thm:bounds}.
\end{proof}

\section{On approximation of compact planar sets by filled-in Julia sets}\label{s:filled}

 In this section we  revisit  the approximation of a planar set $E$ from above by filled-in Julia sets developed in \cite{arxiv}  focusing on  the rate of approximation.  Recall that \cite[Theorem 3.2]{arxiv} establishes a possibility of approximating a regular, polynomially convex $E$ by Julia sets of  polynomials satisfying $\lim\limits_{n\rightarrow\infty} \|w_n\|_E^{1/n} = {\rm cap}\,E$. As we already noted, this condition is implied by many statements. Here we will take polynomials with zeros satisfying  $\lim\limits_{n\rightarrow\infty} \Lambda_n(E,a^{(n)})^{1/n} = 1$ and explore  consequences of this assumption.
We will prove a version of the approximation result which keeps  track of approximation simultaneously in Hausdorff metric $\chi$ and in Klimek metric $\Gamma$ while also highlighting quantitative aspects of the process. Even more importantly, our version allows us to work with polynomials all of whose zeros except one lie on a level set of the Green function $g_E$.  Thanks to the results from Section \ref{s:lagrange} we have a wide choice of sequences of zeros  that are relatively easy to obtain (pseudo Leja points).

 Recall Notations \ref{nota:zbiory} and \ref{nota:lagrange} and  let us fix some final notations needed in the proof.
\begin{nota}\label{nota:stale}
Let $E$ be a nonempty compact set in $\cj$. We put $R(E):=\sup_{z\in E}|z|$.
Furthermore, if  $0\in  {\rm int}E$, define $r(E):=\dist(0, \partial E)$.
\end{nota}
Recall that $\D(a,R)$ stands for the closed disk with the center  $a \in \mathbb{C}$ and radius $R>0$. Note that if $0\in {\rm int}E$, then  $\D(0,r(E))$ is contained in $E$.

Let us now state our result.

\begin{thm}[\mbox{cf. \cite[Theorems 3.2 and 4.12]{arxiv}}] \label{thm: filled} Let $E$ be a regular compact subset of $\cj$. Fix $\varepsilon\in (0,1]$. Then there exist an arbitrarily small number $s>0$ and a polynomial $P$ (chosen for this $s$)  such that
\begin{equation}\label{inbetween}
 E\subset {\cal K}(P)\subset E_s\subset  \left(\widehat{E}\right)^\varepsilon.
\end{equation}In particular, $\Gamma(E, {\cal K}(P))\leq s$ and $\chi\left(\widehat E, {\cal K}(P)\right)\le\varepsilon.$
\end{thm}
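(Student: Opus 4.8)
The plan is to combine the approximation of the Green function $g_E$ by polynomials of the form $L_n$ (or $Q_n$) from Section \ref{s:lagrange} with the standard "dynamical" trick that turns such a polynomial into a polynomial whose filled-in Julia set is a nice super-level neighbourhood of $E$. First I would observe that the statement is translation- and scaling-invariant in the natural sense (both $\chi(\widehat E,\cdot)$ and $\Gamma(E,\cdot)$ behave predictably under affine conjugacy, and Julia sets transform equivariantly under affine conjugation), so without loss of generality I may assume $0\in\operatorname{int}E$ and work with $R(E),r(E)$ from Notation \ref{nota:stale}. Since the family $\{E_s\}_{s>0}$ is a neighbourhood base of $E$ (for $E$ regular and polynomially convex — and here we may replace $E$ by $\widehat E$ since $g_E=g_{\widehat E}$ and ${\cal K}(P)$ is automatically polynomially convex), and since $\{(\widehat E)^\varepsilon\}$ is also a neighbourhood base, I can fix $s_0>0$ small enough that $E_{s_0}\subset(\widehat E)^\varepsilon$; then any $s\in(0,s_0]$ will make the last inclusion in (\ref{inbetween}) automatic. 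So the whole problem reduces to: given small $s>0$, produce a polynomial $P$ with $E\subset{\cal K}(P)\subset E_s$.

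The core construction: take $n$ large and let $Q=Q_n$ be one of the polynomials from Proposition \ref{prop: Bn} (or $L_n$ from Lemma \ref{lem:normL_n} and Theorem \ref{thm:bounds}), so $\deg Q=n$, $\|Q\|_E^{1/n}\to 1$, and $g_E(z)-\tfrac1n\log|Q(z)|\to 0$ uniformly on $D_n$ with an explicit rate $B_n+\omega(1/n^2)$. The idea, as in \cite[Theorem 3.2]{arxiv}, is to rescale: consider $P(z):=c\, Q(z)$ with a constant $c$ chosen so that $P$ behaves like a "scaled" version of the leading term, and then $P^{\circ m}$ forces points to escape precisely when $g_E$ is above a threshold. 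Concretely, one chooses $c>0$ so that $\tfrac1n\log|c|$ is a small negative number tuned to $s$; then for $z$ with $g_E(z)>s'$ (for a suitable $s'$ slightly less than $s$) one has $|P(z)|>|z|$ eventually, forcing the orbit to escape, while for $z$ with $g_E(z)\le s'$ one keeps $|P(z)|$ small enough (using $\|Q\|_E$ control and the maximum principle on $E_{s'}$) that the orbit stays bounded. Because $g_{{\cal K}(P)}=\tfrac1{\deg P}\log^+|{\rm stuff}|$-type identities hold (the Green function of a filled-in Julia set is $\tfrac1n g$ of the escape-rate), the filled-in Julia set of $P$ lands between two sublevel sets of $g_E$, and Proposition \ref{prop:sublevel}(iii) ($E_{\varepsilon+\tau}=(E_\varepsilon)_\tau$) lets one bookkeep the nesting. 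One must also throw in the one extra zero (the "$-1$" in degree $n+1$, or the factor $(z-\zeta_{j_n}^{(n)})$ already present in $L_n$) — this is harmless since it only perturbs $g_E-\tfrac1n\log|Q|$ by $O(1/n)$, absorbed into $B_n$. Taking $n$ large makes $s$ as small as desired.

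The main obstacle, and the place where care is genuinely needed, is the two-sided control: getting $E\subset{\cal K}(P)$ requires that the orbit of every point of $E$ stay bounded under $P$, which needs $\|P\|_{E_{s'}}$ (or at least $\|P\|_E$ together with an invariance of a slightly larger sublevel set) to be dominated by the radius of a disk that $P$ maps into itself; getting ${\cal K}(P)\subset E_s$ requires that every point outside $E_s$ escape, which needs the lower bound $|P(z)|\ge$ (something $>|z|$) on $\Omega_{s}$. Both rely on the uniform estimates of Theorem \ref{thm:bounds}/Proposition \ref{prop: Bn} on the annular region $D_n\setminus\Omega_s$, i.e. exactly where $g_E$ is between $1/n^2$ and $s$, and one has to check that the error term $B_n+\omega(1/n^2)$ is small compared to the gap created by the rescaling constant $c$. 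Quantitatively this is where the rate of approximation enters: choosing $c$ so that $\tfrac1n\log|c|\approx -s/2$ and using $B_n=O(\log n/n)$, $\omega(1/n^2)\to 0$ shows that for all $n$ large the construction closes up, and reading off $\Gamma(E,{\cal K}(P))=\|g_E-g_{{\cal K}(P)}\|_{\cj}\le s$ from the sandwich $E\subset{\cal K}(P)\subset E_s$ (together with $\Gamma(E,E_s)=s$) and $\chi(\widehat E,{\cal K}(P))\le\varepsilon$ from ${\cal K}(P)\subset(\widehat E)^\varepsilon$ and $\widehat E\subset{\cal K}(P)$ finishes the proof.
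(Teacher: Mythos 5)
Your overall strategy is the paper's: reduce to $E\subset\mathcal{K}(P)\subset E_s$ via the neighbourhood-base property of $\{E_s\}$, take $Q_n$ from Proposition \ref{prop: Bn}, multiply by a small constant $\exp(-ns/3)$, and verify the two inclusions by showing that points of $E$ have bounded orbits while points of $\Omega_s$ escape. However, two steps of your sketch have genuine gaps. First, ``without loss of generality $0\in\operatorname{int}E$ by translation and scaling'' is false: a compact set with empty interior (an arc, a Cantor set, the unit circle) cannot be moved by any affine map so that $0$ becomes an interior point. The paper's fix is a separate step: if $0\in E$ but $0\notin\operatorname{int}E$, one replaces $E$ by the sublevel set $E_{s/2}$, which contains $\widehat E$ in its interior by continuity of $g_E$, and applies the interior-point construction to $E_{s/2}$ with parameter $s/2$, using $E_s=(E_{s/2})_{s/2}$ from Proposition \ref{prop:sublevel}. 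You need some version of this; translation alone does not suffice.

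Second, your escape argument is under-specified in a way that matters. With $P=cQ_n$ the lower bound $\frac1n\log|Q_n(\zeta)|\ge g_E(\zeta)-B_n-\omega(1/n^2)$ on $\Omega_s$ only yields $|P(\zeta)|\ge\exp(2ns/3)\,c$, a fixed constant; since $\Omega_s$ is unbounded and forward-invariance alone does not force $|P^{\circ m}(\zeta)|\to\infty$, ``$|P(z)|>|z|$ eventually'' does not follow uniformly. The extra degree-one factor you dismiss as a harmless $O(1/n)$ perturbation is in fact the engine of the escape estimate: the paper takes $P_n(z)=z\exp(-ns/3)Q_n(z)$, so that for $\zeta\in\Omega_s$ one gets $|P_n(\zeta)|>|\zeta|\cdot\frac{R(E)+1}{r(E)}$, i.e.\ a uniform multiplicative expansion factor $\lambda>1$, and simultaneously $\operatorname{dist}(P_n(\zeta),\widehat E)\ge 1\ge\varepsilon$, hence $P_n(\zeta)\in\Omega_s$ again; iterating gives $|P_n^{\circ m}(\zeta)|\ge\lambda^m|\zeta|\to\infty$. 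The same factor $z$ is what sends $E$ into the disk $\D(0,r(E))\subset E$, giving a forward-invariant bounded set and hence $E\subset\mathcal{K}(P_n)$. No appeal to an identity for $g_{\mathcal{K}(P)}$ is needed or used. With these two repairs your argument closes up exactly as in the paper.
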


\begin{proof}
Note first that if we fix $\varepsilon$, the existence of an $s>0$  such that
\begin{equation}\label{inclusions}
E_s\subset \left(\widehat E\right)^\varepsilon
\end{equation}
follows from  the fact that $\{E_s\}_{s>0}$ forms a neigbourhood base of $\widehat E$ (see \cite[Corollary 1]{pams}). Fix such an $s>0$. Note that one can take this $s$ arbitrarily small.  We have of course $E\subset \widehat E \subset E_s$.

\underline{Step I.} We will assume first that $0\in {\rm int}E$.  We will use the sequence of polynomials $(Q_n)$ and the sequence of constants $(B_n)$ from Proposition \ref{prop: Bn}.

Take $n\in \nj$ large enough to satisfy
\begin{equation}\label{s}
  \max\left(3B_n+3\omega\left(\frac1{n^2}\right),\frac6n\log||Q_n||_E, \frac6n\log\frac{R(E)+1}{r(E)}\right)\leq s.
\end{equation}
This is possible since the left hand side tends to zero by Proposition \ref{prop: Bn}. Fix such a large $n$. It follows that
\begin{eqnarray}
  \omega\left(\frac1{n^2}\right)&<&s; \label{1} \\
  B_n+\omega\left(\frac1{n^2}\right)&\leq&\frac s3; \label{2} \\
  \exp\left(\frac{ns}3\right)&>&\frac{R(E)+1}{r(E)} \label{3}; \\
 \exp\left(-\frac{ns}3\right)||Q_n||_E&\leq & \frac{r(E)}{R(E)+1}\label{4}.
\end{eqnarray}

Put $$P_n(z):=z\exp\left(-\frac{ns}3\right)Q_n(z), \qquad z\in\cj.$$

It follows from (\ref{1}) and the definition of $\omega$ that $s>\omega(1/n^2)\geq g_E(z)$ if $z\in  \left(\widehat E\right)^{1/n^2}$. This yields
$\Omega_s=\cj\setminus E_s\subset \cj\setminus  \left(\widehat E\right)^{1/n^2}=D_n$.

Fix now $\zeta\in \Omega_s$. Hence $\zeta\in D_n$ too.
It follows from the definition of $\Omega_s$, from Proposition \ref{prop: Bn} and  from (\ref{2}) that
$$
s< \log \sqrt[n]{\left|Q_n(\zeta)\right|} + \frac s3
$$
and hence
\begin{equation*}\label{zdolu}
|Q_n(\zeta)|>\exp\left(\frac{2ns}3\right).
\end{equation*}
Combining this with  (\ref{3}) gives
\begin{eqnarray}\label{odpychanie}
 \qquad |P_n(\zeta)|=|\zeta|\exp\left(-\frac{ns}3\right)|Q_n(\zeta)|&>& |\zeta|\exp\left(\frac{ns}3\right)> |\zeta| \frac{R(E)+1}{r(E)}. \end{eqnarray}
Note that $\displaystyle \frac{R(E)+1}{r(E)}>1$. Furthermore, it follows from the definition of $r(E)$ that $|P_n(\zeta)|\geq R(E)+1$ and therefore
$\dist\!\!\left(P_n(\zeta), \widehat{E}\right)\geq 1\geq \varepsilon$. Thus, in view of (\ref{inclusions})
\begin{equation}\label{powtarzanie} P_n(\zeta)\in \cj\setminus  \left(\widehat E\right)^\varepsilon\subset  \cj\setminus E_s=\Omega_s.\end{equation}

Since (\ref{odpychanie}) and (\ref{powtarzanie}) are true for every $\zeta\in \Omega_s$, we conclude that $\Omega_s\subset \cj\setminus {\cal K}(P_n)$, i.e. ${\cal K}(P_n)\subset E_s$.

On the other hand, if $z\in E$, it follows from definition of $R(E)$ and (\ref{4}) that
$$
|P_n(z)|\leq R(E)\exp\left(-\frac{ns}3\right)||Q_n||_E\leq  R(E)\frac{r(E)}{R(E)+1}<r(E).
$$
Therefore, $P_n(E)\subset \mathbb{D}(0,r(E))\subset E$ (the latter inclusion is a consequence of the definition of $r(E)$) and this means that $E\subset {\cal K}(P_n)$.
\\

\underline{Step II.} We will assume now that $0\in E$. Then $0\in {\rm int}E_{s/2}$ because $g_E$ is continuous. Recall that by Proposition \ref{prop:sublevel} each $E_s$ is polynomially convex and moreover $E_s=(E_{s/2})_{s/2}$. In view of  Step I with $E$ replaced by $E_{s/2}$ and $s$ replaced by $s/2$ there exist a polynomial $P$ such that
$$
E\subset E_{s/2}\subset {\cal K}(P)\subset E_{s}.
$$

\underline{Step III.} Let $E$ be any regular compact subset of $\cj$ and $w\in E$. We consider the translation of $E$ via vector $-w$, namely $E-w:=\{z\in \cj: z+w\in E\}$. Then $0\in E-w$ and the conclusion follows from Step II and the properties of the Green function (namely from the equality $g_{E-w}(z)=g_E(z+w)$).
\end{proof}

\begin{rem}
A  careful analysis of Step II shows that the zeros of the polynomial $P$, with one exception, do not have to be contained in $E$. Namely, we may use the Lagrange polynomial with nodes in the sublevel set $E_{s'}$ with $s'<s$. These points can be contained in the boundary of $E_{s'}$, while $E\subset {\rm int}E_{s'}$. Considering nodes in $E_{s'}$ offers more flexibility: by Proposition \ref{prop:C2arcs}, the boundary of each $E_{s'}$  is a finite union of quasiconformal arcs, so  by Corollary \ref{cor:deltaconvergence}, every pseudo Leja sequence in $\partial E_{s'}$ of bounded Edrei growth satisfies (\ref{Deltamin}).
 Therefore we can approximate $E$ by filled-in Julia sets defined with use of pseudo Leja points for $E_{s'}$ (cf. the second part of the proof of Proposition \ref{prop: Bn}).
\end{rem}

 In the proof of  our Theorem \ref{thm: filled} one can clearly see what affects  the rate of approximation of a compact, polynomially convex, regular $E$ by polynomial filled-in Julia sets:  some basic geometric quantities related to $E$, the modulus of continuity of the Green function $g_E$ as well as the rate of convergence of $\frac1n\log||Q_n||_E$ to $0$, where $Q_n$ are polynomials satisfying Proposition \ref{prop: Bn}.  In our construction $Q_n$ are of the form $L^{(j_n)}(\cdot, \zeta^{(n)})$, which is important for the estimates below. To get  more explicit relations, one can use the estimates on $||Q_n||$ from the proof of Lemma \ref{lem:normL_n}. Alternatively, one can apply the inequalities
\[
\left\|L^{(j_n)}(\cdot, \zeta^{(n)})\right\|_E\leq \frac{\Delta_n}{|\Delta_{j_n}^{(n)}|}, \qquad n \in \mathbb{N}.
\]
Here $\Delta_n:=\sup_{\xi^{(n)}\subset E}\left(\min_{ k\in\{0,1,...,n\}} |\Delta^{(k)}(\xi^{(n)})|\right)$ depends only on $E$.  Moreover, $\lim_{n\rightarrow \infty} \sqrt[n]{\Delta_n}={\rm cap} E$
(for the proof see \cite[p. 258]{Lejabook}). Finally, one can use the general inequalities $\|L^{(j_n)}(\cdot, \zeta^{(n)})\| \leq \Lambda_n(E, \zeta^{(n)})$ (see Definition \ref{def:Lebesgue}). But one still needs more detailed  information about convergence of the mentioned quantities to make conclusions about the rate of approximation of sets.

\section{An approach to estimation of the rate of convergence}\label{s:rates}

 For sets $E$ as above  with $g_E$ which is H\"older continuous, \cite[Theorem 4.12]{arxiv}  gives an estimate of the rate of approximation of $E$ in the metric $\Gamma$ by polynomial filled-in Julia sets. Namely, for $n \in \mathbb{N}$, let $S_n(E)$  be the infimum of $s > 0$ for which there exists a polynomial $P_n$ of
degree $n$ such that (\ref{inbetween}) holds. Then there exists a real number $c = c(E)>0$ such that $S_n(E) \leq c\frac{\log n}{\sqrt{n}}$. This result is proved with the use of \cite[Theorem 2.2]{Pritsker11}, which was  shown under the assumption that  the zeros $\zeta_0^{(n-1)},...,\zeta_{n-1}^{(n-1)}$ of polynomials $w_n$  which  approximate $g_E$ in $\mathbb{C}\setminus E$  satisfy the condition $\log  V(\zeta^{(n)})
-{\rm cap}E \leq C_1\frac{\log n}{n}$  (see Notation \ref{nota:lagrange}).  So far, the only systems of points known to satisfy this condition are extremal Fekete arrays and Leja sequences (or their small perturbations). Both Fekete and Leja points were considered in \cite{arxiv} to derive the rate $O(\frac{\log n}{\sqrt{n}})$ of approximation by Julia sets. We will begin this section by giving two examples of families of points which improve this rate to  $O(\frac{\log n}{n})$ or, in some particular situation, to $O(1/n^{2\alpha})$. In the remaining part we assume that $E$ is a compact, polynomially convex subset of $\mathbb{C}$ and that the Green function $g_E$ is H\"older continuous, that is, for all $\delta >0$ one has $\omega(\delta)=A\delta^\alpha$ with suitable constants $A, \alpha$. We have the following corollary from Theorem \ref{thm: filled}:\\

\begin{cor}\label{wn:Gamma}
  Let $E$ be a compact subset of $\cj$ such that $g_E$ is H\"older continuous with the constant $A$ and the exponent $\alpha\in (0,1]$. There exist for each $n\in\nj$, large enough,  a polynomial $P_n$ of degree $n+1$ such that
  \begin{itemize}
    \item $\Gamma(E, {\cal K}(P_n))\leq (3A+12) n^{-1}{\log(n+1)}, \quad {\it if} \quad \alpha\in [1/2, 1]$;
    \item $\Gamma(E, {\cal K}(P_n))\leq (3A+12) n^{-2\alpha}, \quad {\it if} \quad \alpha\in (0,1/2)$.
  \end{itemize}
\end{cor}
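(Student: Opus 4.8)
The plan is to re-run the construction in Step~I of the proof of Theorem \ref{thm: filled}, keeping the tolerance quantitative. Fix a large $n$. Translating $E$ so that $0\in E$ changes neither $g_E$ nor the H\"older data (by $g_{E-w}(z)=g_E(z+w)$), and then $0\in\operatorname{int}E_\tau$ for every $\tau>0$ since $g_E(0)=0$. Apply the Step~I construction to the polynomially convex regular set $F:=E_{\tau_n}$, for a small offset $\tau_n>0$ to be fixed below, and to the tolerance $\sigma_n$ defined to be the left-hand side of (\ref{s}) written for $n$ and $F$, so that (\ref{s}) holds by definition; choosing $\tau_n$ first decouples $\sigma_n$ from the inner radius of $F$ and is cleaner than applying Step~II of that proof literally (which would make $s$ enter its own defining inequality). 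For $(Q_n)$ take the Fekete Lagrange polynomials $Q_n=L^{(0)}(\cdot,\eta^{(n)})$ exhibited in the proof of Proposition \ref{prop: Bn}: these have $\deg Q_n=n$, $\|Q_n\|_F\le1$ by (\ref{zgory}), and $B_n=\frac3n\log[(n+1)\Theta(F)]$ by (\ref{FeketeGreenbound}). Provided $n$ is large enough that $\tau_n\le1$ and $\sigma_n$ is small enough that $F_{\sigma_n}\subset(\widehat F)^{1}$ (automatic, since $\tau_n,\sigma_n\to0$), Step~I produces a polynomial $P_n$ of degree $n+1$ with $E\subset F\subset\mathcal K(P_n)\subset F_{\sigma_n}=E_{\tau_n+\sigma_n}$, the last equality by Proposition \ref{prop:sublevel}; monotonicity of the Green function together with $g_{E_{\tau_n+\sigma_n}}=\max(0,g_E-\tau_n-\sigma_n)$ then gives $\Gamma(E,\mathcal K(P_n))\le\tau_n+\sigma_n$.

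It remains to choose $\tau_n$ and bound $\tau_n+\sigma_n$. H\"older continuity gives $\omega(1/n^2)=A\,n^{-2\alpha}$, and by Lemma \ref{lem:holder} the same constant and exponent survive for $g_F$, so the term $3\omega(1/n^2)$ in (\ref{s}) is exactly $3A\,n^{-2\alpha}$. The remaining ingredients of (\ref{s}) are geometric: $\Theta(F)$ and the outer radius $R(F)$ are bounded by constants depending only on $E$, because $\tau_n\le1$ forces $F\subset E_1$, a fixed bounded set; the $\|Q_n\|_F$-term is $\le0$; and the inner radius satisfies $r(F)=\dist(0,\partial E_{\tau_n})\ge(\tau_n/A)^{1/\alpha}$, since for $z\in\D\bigl(0,(\tau_n/A)^{1/\alpha}\bigr)$ one has $\dist(z,E)\le|z|$ and hence $g_E(z)\le\omega\bigl((\tau_n/A)^{1/\alpha}\bigr)=\tau_n$. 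Taking $\tau_n$ of order $\frac{\log n}{n}$ makes $\log(1/\tau_n)=O(\log n)$, so $\frac6n\log\frac{R(F)+1}{r(F)}$ is $O(\log n/n)$; collecting all terms, one may take $\sigma_n$, and hence $\tau_n+\sigma_n$, to be at most $\frac{c}{n}\log(n+1)+3A\,n^{-2\alpha}$ for every large $n$, where $c$ is an absolute constant arising from the geometric pieces together with the factor $\frac9n\log[(n+1)\Theta(F)]$; a careful count shows $c=12$ is admissible.

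Finally, compare the two summands of $\Gamma(E,\mathcal K(P_n))\le\frac{12}{n}\log(n+1)+3A\,n^{-2\alpha}$. If $\alpha\in[1/2,1]$, then $n^{-2\alpha}\le n^{-1}\le n^{-1}\log(n+1)$, so the bound is $\le(3A+12)\,n^{-1}\log(n+1)$; if $\alpha\in(0,1/2)$, then $1-2\alpha>0$, so $n^{-1}\log(n+1)\le n^{-2\alpha}$ for all large $n$, and the bound is $\le(3A+12)\,n^{-2\alpha}$. This is exactly the dichotomy in the statement.

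The step I expect to be the real obstacle is the bookkeeping in the middle paragraph: the offset $\tau$ must be small enough not to degrade the rate, yet large enough that the inner radius $r(E_\tau)$ of the translated sublevel set --- which genuinely shrinks with $\tau$ when $E$ is thin --- keeps the term $\frac6n\log\frac{R+1}{r}$ of order $\frac{\log n}{n}$; and one then has to confirm that all these $\frac{\log n}{n}$-order contributions, together with $\frac9n\log[(n+1)\Theta(F)]$, fit under the advertised constant $12$ while the $3A\,n^{-2\alpha}$ term is carried through untouched.
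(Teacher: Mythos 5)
Your route is the same as the paper's: Fekete nodes, the Step~I construction of Theorem \ref{thm: filled} rerun quantitatively, and an inflated sublevel set $E_{\tau_n}$ (with Lemma \ref{lem:holder} preserving the H\"older constant and exponent) to handle the case $0\notin\operatorname{int}E$. The paper's own Steps II--III are in fact only one line each, so your bookkeeping is the more explicit of the two, and the structure does deliver the advertised rates $O(n^{-1}\log(n+1))$ and $O(n^{-2\alpha})$.

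The one step you assert rather than prove --- ``a careful count shows $c=12$ is admissible'' --- is exactly where the argument, as you have set it up, does not close. With $\tau_n\asymp\frac{\log n}{n}$ and the only available lower bound $r(E_{\tau_n})\ge(\tau_n/A)^{1/\alpha}$ (which is sharp up to constants, e.g.\ when the basepoint $0$ is an endpoint of a segment), the third entry of the max in (\ref{s}) is $\frac6n\log\frac{R(F)+1}{r(F)}=\frac6\alpha\cdot\frac{\log n}{n}\,(1+o(1))$. For $\alpha$ near $1/2$ this is already $\approx\frac{12\log n}{n}$ by itself, so $\sigma_n\ge\frac{12\log n}{n}(1+o(1))$, and adding $\tau_n\approx\frac{\log n}{n}$ gives $\tau_n+\sigma_n\approx\frac{13\log n}{n}$, which exceeds $(3A+12)\,n^{-1}\log(n+1)$ whenever $A<1/3$ (and $A$ can be made arbitrarily small by dilating $E$). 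In general your count requires $1+\frac6\alpha\le 12+3A$, which fails for $\alpha\in[1/2,\,6/(11+3A))$. The repair is to take $\tau_n\asymp 1/n$ instead: then $\log(1/\tau_n)=\log n+O(1)$ still, so $\frac6n\log\frac{R(F)+1}{r(F)}\le\frac{12\log n}{n}+O(1/n)$ for all $\alpha\ge1/2$, while $\tau_n$ itself now contributes only $O(1/n)$; the total becomes $\frac{12\log(n+1)}{n}+O(1/n)$, and the $O(1/n)$ excess is absorbed by the slack $3A\,n^{-1}\log(n+1)$ once $n$ is large (this is where $A>0$ is genuinely used). With that single change your proof is complete; the case $\alpha\in(0,1/2)$ is unaffected, since there every term of order $\frac{\log n}{n}$ is $o(n^{-2\alpha})$ and only the $3A\,n^{-2\alpha}$ term survives at top order.
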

\begin{proof}
 We  take  the Fekete points $\eta^{(n)}$ as the nodes of the Lagrange polynomials which we will use in the construction (cf. the proof of Proposition \ref{prop: Bn}). We will refer to the steps of the proof of Theorem \ref{thm: filled}.

\underline{Step I.}  Observe that in this case  the norm of the Lagrange polynomial is not larger than 1  (cf. (\ref{zgory})). Moreover, if we  assume $n+1>(R(E)+1)/r(E)$ then formula (\ref{s}) will reduce to
\begin{equation}\label{scor}
  \frac9n\log[(n+1)\Theta(E)]+3\omega\left(\frac1{n^2}\right)\leq s.
\end{equation}
Hence
if we fix first $s$, then choose $n$ large enough and satisfying (\ref{scor}) and fix it too, we have
\begin{eqnarray*}
 s\geq\frac{3A}{n^{2\alpha}}+\frac9n\log(n+1)+\frac3n\log({\rm diam}E+2).
\end{eqnarray*}
Therefore if $\alpha\in [1/2,1]$, then we can take $s_n=(3A+12)n^{-1}\log(n+1)$ (resp. if $\alpha\in(0,1/2)$, then $s_n=(3A+12)n^{-2\alpha}$) for large $n$.

Step II. The estimates follow from Step I thanks to Lemma \ref{lem:holder}.

Step III. The same is true for the image of the set $E$ under a translation.
\end{proof}

Let us see an example of a polynomial $P_n$ satisfying the assertions of Corollary \ref{wn:Gamma}.
 \begin{exa}\label{exa:Fekete}
 Without loss of generality we can assume that $0\in E$. Let $A, \alpha$ be as in the assumptions of Corollary  \ref{wn:Gamma} and let $$s_n=\frac{3A}{n^{2\alpha}}+\frac9n\log(n+1)+\frac3n\log({\rm diam}E+3).$$ If $0\in {\rm int} E$ let
 $\eta^{(n)}$ be a Fekete $(n+1)$-tuple in $E$ and if $0\in E\setminus {\rm int}E$, let it be a Fekete $(n+1)$-tuple for $E_{\tau_n}$ where
$\tau_n=(3/n)(\log(n+1)-\log({\rm diam E}+3))$. It should be
 ordered so that (\ref{order}) holds. Then for $n$ large enough and $z\in \cj$
 $$ P_n(z)=z\exp\left(-\frac{ns_n}3\right)L^{(0)}(z, \eta^{(n)}).$$
  \end{exa}

  \bigskip

The following corollary is a straightforward consequence of Theorem \ref{thm: filled} (in a similar way as Corollary \ref{wn:Gamma}) and considers the most general situation concerning polynomials which we can use.

\begin{cor}\label{cor:general}
 Let $E$ be a polynomially convex compact subset of $\cj$ such that $0\in {\rm int}E$ and $g_E$ is H\"older continuous with the constant $A$ and the exponent $\alpha\in (0,1]$. Let $(B_n)_{n=1}^\infty$ and $(Q_n)_{n=1}^\infty$ be such as in Proposition $\ref{prop: Bn}$. Define
  \begin{eqnarray*}
    t_n&:=&\frac{3A}{n^{2\alpha}}+\frac9n\log(n+1)+\frac3n\log({\rm diam}E+2);\\
     s_n &:=& \max\left(t_n,\frac6n\log||Q_n||_E, \frac6n\log\frac{R(E)+1}{r(E)}\right); \\
    P_n(z)&:=& z\exp\left(-\frac{ns_n}3\right)Q_n(z), \qquad \qquad \qquad z\in \cj.
  \end{eqnarray*}
  Then $\Gamma(E, {\cal K}(P_n))\leq s_n$.
\end{cor}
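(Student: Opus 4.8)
The plan is to read off the statement directly from Step I of the proof of Theorem \ref{thm: filled}, after checking that the number $s_n$ defined in the corollary is exactly the quantity required there. Recall that the sequences $(B_n)$ and $(Q_n)$ supplied by Proposition \ref{prop: Bn} are built so that $Q_n$ has degree $n$, $\sqrt[n]{\|Q_n\|_E}\to 1$, and $g_E(z)-\log\sqrt[n]{|Q_n(z)|}\le B_n+\omega(1/n^2)$ for $z\in D_n$, with $B_n=\frac3n\log[(n+1)\Theta(E)]$ (this is the value produced by (\ref{FeketeGreenbound}) in the Fekete case and by Theorem \ref{thm:bounds} in the pseudo Leja case). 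Since $g_E$ is H\"older continuous, $\omega(\delta)=A\delta^\alpha$, so $\omega(1/n^2)=An^{-2\alpha}$; combining this with $\Theta(E)^3=2+{\rm diam}E$ gives
\[
3B_n+3\omega(1/n^2)=\frac9n\log(n+1)+\frac3n\log({\rm diam}E+2)+\frac{3A}{n^{2\alpha}}=t_n .
\]
Hence $s_n=\max\left(3B_n+3\omega(1/n^2),\ \frac6n\log\|Q_n\|_E,\ \frac6n\log\frac{R(E)+1}{r(E)}\right)$, which is precisely the left-hand side of inequality (\ref{s}) in the proof of Theorem \ref{thm: filled}.

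Next I would note that $s_n\to 0$, since each of $B_n$, $\omega(1/n^2)$, $\frac1n\log\|Q_n\|_E$ and $\frac1n\log\frac{R(E)+1}{r(E)}$ tends to $0$ by Proposition \ref{prop: Bn}. Because $\{E_s\}_{s>0}$ is a neighbourhood base of $\widehat E$ (see \cite[Corollary 1]{pams}), for all sufficiently large $n$ we therefore have $E_{s_n}\subset(\widehat E)^{1}$, i.e. inclusion (\ref{inclusions}) holds with $\varepsilon=1$ and $s=s_n$. As $0\in{\rm int}E$ by hypothesis and $P_n(z)=z\exp(-ns_n/3)Q_n(z)$ is exactly the polynomial assembled in Step I from the data $(Q_n,s_n)$, Step I of the proof of Theorem \ref{thm: filled} now applies verbatim and yields, for $n$ large enough,
\[
E\subset\mathcal{K}(P_n)\subset E_{s_n}.
\]

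Finally I would convert this pair of inclusions into the claimed $\Gamma$-estimate via the formula $\Gamma(F,G)=\inf\{\varepsilon>0:F\subset G_\varepsilon,\ G\subset F_\varepsilon\}$ recalled in Section \ref{s:green}: from $\mathcal{K}(P_n)\subset E_{s_n}$ and $E\subset\mathcal{K}(P_n)\subset(\mathcal{K}(P_n))_{s_n}$ we obtain $\Gamma(E,\mathcal{K}(P_n))\le s_n$, which is the assertion.

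The one place that needs genuine care is the identity $3B_n+3\omega(1/n^2)=t_n$: it forces one to use the explicit value of $B_n$ coming out of the proof of Proposition \ref{prop: Bn} rather than merely the asymptotics $B_n=O(\log n/n)$, and to unwind the definition of $\Theta(E)$ from Notations \ref{nota:lagrange}. Everything else is just a transcription of Step I of Theorem \ref{thm: filled}, with the fixed pair $(\varepsilon,s)$ used there now taken to be $(1,s_n)$, legitimate once $n$ is large enough that $E_{s_n}\subset(\widehat E)^{1}$ and $\deg P_n=n+1\ge 2$.
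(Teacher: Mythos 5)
Your argument is correct and is exactly the route the paper intends: the paper gives no separate proof of this corollary, describing it only as a straightforward consequence of Theorem \ref{thm: filled} obtained as in Corollary \ref{wn:Gamma}, and your verification that $s_n$ is precisely the left-hand side of inequality (\ref{s}) (using the explicit value $B_n=\frac3n\log[(n+1)\Theta(E)]$ from the proof of Proposition \ref{prop: Bn}) together with a rerun of Step I is the intended content. Your explicit restriction to $n$ large enough that $E_{s_n}\subset(\widehat E)^1$ is a point the statement leaves implicit (it is stated explicitly in Corollary \ref{wn:Gamma}), and you are right to include it, since Step I genuinely needs inclusion (\ref{inclusions}).
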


 Let us consider now a special case in which we use a pseudo Leja sequence.

\begin{exa}\label{exa:C2curve}
Let the outer boundary of $E$ be a $\mathcal{C}^2$ curve. We know $g_E$ is H\"older continuous, say with constants $A$ and $\alpha$. We assume that $0\in {\rm int}E$ once again. Let $a_n :=\varphi(e_n), \ n\in \{0, 1,2,...\}$,  where $\varphi$ denotes the conformal map of the exterior of the unit disk $\mathbb{D}$ onto the exterior of $E$ and $(e_n)_{n=0}^\infty$ is a  Leja sequence in $\mathbb{D}$. The points $a_n$ form a pseudo Leja sequence (not necessarily of bounded Edrei growth) and by Corollary \ref{cor:images} they satisfy (\ref{Deltamin}).
We can take in this case $Q_n:=L^{(j_n)}(\cdot, a^{(n)})$, where
$j_n$ is chosen to satisfy (\ref{deltajn}). It follows from Corollary \ref{cor:general} (with the notations taken from there) that
$\Gamma(E, {\cal K}(P_n))\leq s_n$. Moreover, in this case $s_n=O(\frac{\log n}{n})$, since $||Q_n||_E\leq \Lambda_n(E)$ and one has $\Lambda_n(E)=O(n^{1+2\beta/\log 2})$ with $\beta>0$ depending only on $E$ by \cite[Theorem 1.3]{Ir}.
\end{exa}

Note that while Example \ref{exa:Fekete} improves an already existing estimate, Example \ref{exa:C2curve} gives an  explicit estimate in a case that was not previously covered, because  pseudo Leja points are not mentioned either in \cite{arxiv} or in \cite{Pritsker11}.

Explicit estimations for the Hausdorff metric are not discussed in \cite{arxiv}. We can find such an estimate in the special case of sets whose Green functions satisfy a special type of the \L ojasiewicz inequality, introduced by M. T. Belghiti and L. P. Gendre in their Ph.D. dissertations (cf. \cite{LS} and the references given there). Let us recall the definition (or actually an equivalent condition, cf. \cite{wielwyp}). We say that a compact regular set $E$ satisfies the {\it \L ojasiewicz-Siciak condition} if there exist positive constants $B, \beta$ such that for $U$ bounded neighbourhood of $E$
\begin{equation}\label{LS}
g_E(z)\geq B(\dist(z, E))^\beta,\qquad z\in U.
\end{equation}
 Every compact subset of $\rj$ satisfies \L ojasiewicz-Siciak condition (see \cite{BCE}). Moreover, it follows that every compact regular subset of any line in the complex plane satisfies it too, since $g_{f^{-1}(E)}=g_E\circ f$ for any compact set $E$ and any linear function $f:\cj\ni z\mapsto az+b\in \cj$. Another positive example is a star-like set $E:=\left\{z=r\exp(2\pi i k/n): r\in [0,1], k\in\{1,...,n\}\right\}$, $n\in\nj$, (see \cite{BCE}).
 In the negative direction, it is worth mentioning that the union of two tangent disks in $\mathbb{C}$ does not satisfy the \L ojasiewicz-Siciak condition. This was shown  by J. Siciak (see \cite{cosc}).

The following lemma shows how the \L ojasiewicz-Siciak condition grants a relation between the Klimek metric and the Hausdorff one in our case.
\begin{lem}\label{lem:LS} Assume that $E, F$ are regular compact planar sets and $U$ is an open set such that $E\subset F\subset U$. Let $E$ satisfy the \L ojasiewicz-Siciak condition.  Then
$$
\chi(E,F)\leq \left(\frac1B\Gamma(E,F)\right)^{1/\beta},
$$
where $B, \beta$ are the constants from $(\ref{LS})$ for $E$ and $U$.
\end{lem}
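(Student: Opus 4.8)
The plan is to combine the \L ojasiewicz-Siciak lower bound for $g_E$ with the characterization $\Gamma(E,F)=\inf\{\varepsilon>0:\;E\subset F_\varepsilon,\;F\subset E_\varepsilon\}$ recalled in Section~\ref{s:green}, and then translate an inclusion between sublevel sets into a Hausdorff-metric estimate via the definition $\chi(X,Y)=\inf\{\varepsilon>0:\;X\subset Y^\varepsilon,\;Y\subset X^\varepsilon\}$.

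First I would set $\gamma:=\Gamma(E,F)$ and fix any $\varepsilon>\gamma$. By the formula for $\Gamma$ we then have $F\subset E_\varepsilon$, i.e. $g_E(z)\le\varepsilon$ for every $z\in F$. Next I would invoke the \L ojasiewicz-Siciak inequality $(\ref{LS})$ for $E$ on the neighbourhood $U$: since $E\subset F\subset U$, every $z\in F$ lies in $U$, so $B(\dist(z,E))^\beta\le g_E(z)\le\varepsilon$, whence $\dist(z,E)\le(\varepsilon/B)^{1/\beta}$ for all $z\in F$. This gives $F\subset E^{\,\delta}$ with $\delta:=(\varepsilon/B)^{1/\beta}$. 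On the other hand $E\subset F\subset F^{\,\delta}$ trivially, so by the dilation characterization of the Hausdorff metric $\chi(E,F)\le\delta=(\varepsilon/B)^{1/\beta}$. Letting $\varepsilon\downarrow\gamma$ and using continuity of $t\mapsto(t/B)^{1/\beta}$ yields $\chi(E,F)\le(\Gamma(E,F)/B)^{1/\beta}$, which is the claim.

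The only minor point to be careful about is the strict-versus-nonstrict inequality in the definition of the dilation $E^\delta$ (open $\delta$-neighbourhood): working with $\varepsilon>\gamma$ rather than $\varepsilon=\gamma$ and passing to the limit at the end handles this cleanly, so that the sets involved are genuinely contained in the \emph{open} dilations. One should also note that the hypothesis $E\subset F\subset U$ is used exactly once and exactly where needed — to guarantee that the points $z\in F$ at which we apply $(\ref{LS})$ belong to the neighbourhood $U$ on which the \L ojasiewicz-Siciak estimate is assumed to hold; without the containment in $U$ the inequality $(\ref{LS})$ need not apply.

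I do not expect any serious obstacle here: the statement is essentially a bookkeeping lemma that repackages $(\ref{LS})$ together with the two equivalent descriptions of $\chi$ and $\Gamma$ already established in the excerpt. If anything, the subtlest step is simply making sure the direction of the inequalities lines up — $\Gamma$ controls $g_E$ on $F$ from above, $(\ref{LS})$ controls $\dist(\cdot,E)$ on $F$ in terms of $g_E$, and $\chi$ is then bounded by this distance — but this is routine.
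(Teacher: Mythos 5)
Your proof is correct and follows essentially the same route as the paper: the inclusion $E\subset F$ reduces both metrics to quantities measured on $F$, and the \L ojasiewicz-Siciak inequality converts the bound on $g_E|_F$ into a bound on $\dist(\cdot,E)|_F$. The paper simply works directly with the sup-norm formulas $\Gamma(E,F)=\|g_E\|_F$ and $\chi(E,F)=\|\dist(\cdot,E)\|_F$, which makes your limiting argument $\varepsilon\downarrow\gamma$ and the open-versus-closed dilation discussion unnecessary, but the mathematical content is identical.
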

\begin{proof} Inclusion $E\subset F$ yields $$
\Gamma(E, F)=||g_E||_{F},\qquad \chi(E, F) =||\dist(\cdot, E)||_{F}.
$$
The assertion follows from (\ref{LS}).
\end{proof}

  A big family of examples of sets satisfying  the \L ojasiewicz-Siciak condition is the Lithner family (considered in \cite{lith}) of totally disconnected uniformly perfect sets, which can be called the generalized Cantor sets.  The \L ojasiewicz-Siciak condition for them was established in   \cite{cosc}. Being uniformly perfect, these sets also have H\"older continuous Green functions.
  Let us look at another family of sets enjoying both properties, one consisting of polynomial
preimages of the unit disk, i.e. sets of the form $p^{-1}(\mathbb{D}(0,1))=\{z\in\cj: |p(z)|\leq 1\}$, where $p$ is a non-constant polynomial. Since
  $$g_{p^{-1}(\mathbb{D}(0,1))}=\frac1{{\rm deg}p}g_{\mathbb{D}(0,1)}\circ p=\frac1{{\rm deg}p}\max\left(0, \log|p|\right)$$
(for the first equality see e.g. \cite[p.134]{Ransford} or \cite[Theorem 5.3.1]{Kksiazka}), these sets satisfy the \L ojasiewicz-Siciak condition and have H\"older continuous Green functions (cf. \cite{wielwyp}). Furthermore, any simple polygon and any simple (not closed) polygonal chain on the plane  (as well as a union of any finite family of pairwise disjoint sets of this kind) have also both properties. The \L ojasiewicz-Siciak condition can be deduced e.g. from the fact that the complement is a H\"older domain (see \cite{BCE} for details) or from
  \cite[Theorem 1.3]{Pi14}.
  For more information regarding the (sometimes subtle) relations between the \L ojasiewicz-Siciak condition and the H\"older continuity property of Green functions we refer to \cite{Pi14}, \cite{wielwyp}.

We are ready to state our last result concerning a rate of convergence in the Hausdorff metric.

\begin{cor}\label{wn:Loja}
Let $E$ be a compact set
satisfying \L ojasiewicz-Siciak condition. Assume also that $g_E$ is H\"older continuous with exponent $\alpha\in (0,1]$. Then there exist positive constants $C, \kappa$, depending only on the set $E$, and for each $n\in\nj$, large enough,  a polynomial $P_n$ of degree $n+1$ such that
  \begin{itemize}
    \item $\chi(E, {\cal K}(P_n))\leq C \left(n^{-1}\log(n+1)\right)^\kappa, \quad {\it if} \quad \alpha\in [1/2, 1]$;
    \item $\chi(E, {\cal K}(P_n))\leq C n^{-2\alpha\kappa}, \quad {\it if} \quad \alpha\in (0,1/2)$.
  \end{itemize}
\end{cor}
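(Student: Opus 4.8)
The plan is to combine Corollary \ref{wn:Gamma} with Lemma \ref{lem:LS}, so the argument is essentially a one-line composition of two estimates already in hand. First I would recall that Corollary \ref{wn:Gamma} produces, for each large enough $n$, a polynomial $P_n$ of degree $n+1$ such that $E\subset {\cal K}(P_n)\subset E_{s_n}\subset \left(\widehat E\right)^\varepsilon$ (this is the content of \eqref{inbetween} in Theorem \ref{thm: filled}, which Corollary \ref{wn:Gamma} invokes), with $\Gamma(E,{\cal K}(P_n))\leq s_n$, where $s_n=(3A+12)n^{-1}\log(n+1)$ when $\alpha\in[1/2,1]$ and $s_n=(3A+12)n^{-2\alpha}$ when $\alpha\in(0,1/2)$. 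Since $E$ is polynomially convex here (it satisfies the \L ojasiewicz-Siciak condition, and we may also reduce to that case as in Step III of Theorem \ref{thm: filled}), we have $\widehat E=E$, so these inclusions read $E\subset {\cal K}(P_n)\subset E^\varepsilon=:U$, an open neighbourhood of $E$.

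Next I would apply Lemma \ref{lem:LS} with $F={\cal K}(P_n)$ and the open set $U$ just described: since $E\subset {\cal K}(P_n)\subset U$ and $E$ satisfies the \L ojasiewicz-Siciak condition with constants $B,\beta$ attached to $U$, the lemma gives
\[
\chi(E,{\cal K}(P_n))\leq \left(\tfrac1B\,\Gamma(E,{\cal K}(P_n))\right)^{1/\beta}\leq \left(\tfrac1B\, s_n\right)^{1/\beta}.
\]
Substituting the two expressions for $s_n$ and setting $\kappa:=1/\beta$ and $C:=\left((3A+12)/B\right)^{1/\beta}$ (a constant depending only on $E$, since $A,\alpha$ come from the H\"older continuity of $g_E$ and $B,\beta$ from the \L ojasiewicz-Siciak condition) yields exactly the two asserted bounds: $\chi(E,{\cal K}(P_n))\leq C\left(n^{-1}\log(n+1)\right)^\kappa$ when $\alpha\in[1/2,1]$ and $\chi(E,{\cal K}(P_n))\leq C\, n^{-2\alpha\kappa}$ when $\alpha\in(0,1/2)$.

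The one point that needs a word of care is that Lemma \ref{lem:LS} requires the constants $B,\beta$ to be those coming from a fixed neighbourhood $U$ of $E$; so I would first fix $\varepsilon\in(0,1]$ (equivalently fix $U=E^\varepsilon$), extract $B,\beta$ for that $U$, and only then apply Corollary \ref{wn:Gamma} with that same $\varepsilon$ — this is legitimate because Corollary \ref{wn:Gamma} holds for every $\varepsilon$, the resulting $P_n$ satisfying ${\cal K}(P_n)\subset \left(\widehat E\right)^\varepsilon=U$. I expect no real obstacle here; the statement is a corollary in the literal sense, and the only mild subtlety — making sure the neighbourhood used to define the \L ojasiewicz constants is the same one containing ${\cal K}(P_n)$ — is handled by choosing $\varepsilon$ first. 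I would also remark that, strictly speaking, one reduces to the polynomially convex case at the outset (a compact set satisfying the \L ojasiewicz-Siciak condition is regular, and one argues for $\widehat E$ and then translates/identifies as in Theorem \ref{thm: filled}), after which $\widehat E=E$ and the chain of inclusions above is exactly \eqref{inbetween}.
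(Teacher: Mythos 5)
Your proposal is correct and matches the paper's own proof essentially verbatim: the paper likewise fixes $P_n$ from Corollary \ref{wn:Gamma}, notes $E\subset\mathcal{K}(P_n)\subset E^1$ from Theorem \ref{thm: filled} (taking $\varepsilon=1$), takes $B,\beta$ from the \L ojasiewicz--Siciak condition for $U:=E^1$, and concludes via Lemma \ref{lem:LS} with $\kappa=1/\beta$. Your extra remarks (polynomial convexity of $E$ from the \L ojasiewicz--Siciak condition, and fixing the neighbourhood $U$ before extracting $B,\beta$) are exactly the points the paper also makes.
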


\begin{proof}  Recall that since $E$ satisfies \L ojasiewicz-Siciak condition, it is polynomially convex (see  \cite{wielwyp}).  Fix $P_n$ from Corollary \ref{wn:Gamma}. We know that $E\subset {\cal K}(P_n)\subset E^1$ by Theorem \ref{thm: filled}. Let $B, \beta$ be the constants from (\ref{LS}) for $E$ and $U:=E^1$. Put $\kappa:=1/\beta$. The assertion follows from Lemma \ref{lem:LS} (with $F:=  {\cal K}(P_n)$)  and Corollary \ref{wn:Gamma}.
\end{proof}

\subsection*{Acknowledgements}
 This paper was written when the third author was visiting the Institute of Mathematics of Jagiellonian University (Krak\'ow, Poland)  on her study leave from American Mathematical Society. During that time the second and the third author also visited the Department of Mathematics at Uppsala University (Sweden).   Thanks are extended to both institutions for their hospitality and particularly to Maciej Klimek for the invitation to Uppsala and helpful conversations.  We also thank Malik Younsi for discussions about the contents of older and newer versions of \cite{arxiv}. And last but not least, the referee deserves thanks for careful reading
and many useful comments. 

The research of the first and the second author was partially supported by the NCN Grant No. 2013/11/B/ST1/03693.

\end{document}